\def\R{\mathbb{R}}
\def\N{\mathbb{N}}
\def\e{\varepsilon}
\def\a{\alpha}
\def\b{\beta}
\def\la{\lambda}
\def\d{\delta}
\def\up{\upsilon}
\def\pl{\phi_\la}
\def\pll{\phi_{\la\la}}
\def\nub{\overline{\nu}}
\def\mub{\overline{\mu}}
\def\ft{\widetilde{f'}(0)}
\def\fp{f'(0)}
\def\Pt{\widetilde{P}}
\def\vp{\varphi}
\def\Gt2{\widetilde{\Gamma_2}}
\def\su{\underline{u}}
\def\sv{\underline{v}}
\def\us{\overline{u}}
\def\vs{\overline{v}}
\def\las{\overline{\lambda}}
\def\phis{\underline{\phi}}
\def\Psit{\tilde{\Psi}}
\def\vps{\underline{\varphi}}
\def\ut{\tilde{u}}
\def\vt{\tilde{v}}
\def\ct{\tilde{c}}
\def\Cu{C_{unif}}
\def\svp{\underline{\varphi}}
\def\vps{\overline{\varphi}}
\renewcommand{\Re}[1]{{\cal R}e  \left ( #1 \right )}
\def\lp{\left(}
\def\rp{\right)}
\def\lb{\left|}
\def\rb{\right|}
\def\lV{\left\Vert}
\def\rV{\right\Vert}
\def\ML{\mathcal{L}}
\def\MV{\mathcal{V}}
\newtheorem{theorem}{Theorem}[section]
\newtheorem{prop}{Proposition}[section]
\newtheorem{lem}{Lemma}[section]
\newtheorem{cor}{Corollary}
\theoremstyle{definition}
  \author{Antoine Pauthier%
  \thanks{e-mail: \texttt{antoine.pauthier@math.univ-toulouse.fr}}}
\affil{Institut de Math\'ematiques de Toulouse ; UMR5219 \\ Universit\'e de Toulouse ; CNRS \\ UPS IMT, F-31062 Toulouse Cedex 9, France}
\title{The influence of nonlocal
	exchange terms on Fisher-KPP propagation driven by a line of fast diffusion}
\begin{document}
 
\maketitle
\tableofcontents

\begin{abstract}
A new model to describe biological invasion influenced by a line with fast diffusion has been introduced
by H. Berestycki, J.-M. Roquejoffre and L. Rossi in 2012.The purpose of this article is to present a related model where the line
of fast diffusion has a nontrivial range of influence, i.e. the exchanges between the line and the surrounding 
space has a nontrivial support.  We show the existence of a  spreading velocity depending on the diffusion on the line. 
Two intermediate model are also discussed.
Then, we try to understand
the influence of different exchange terms on this spreading speed. We show that various behaviour may happen, depending 
on the considered exchange distributions. 
\end{abstract}

\section{Introduction}
\subsection{Models}

The purpose of this study is a continuation of ~\cite{BRR1} in which was introduced, by H. Berestycki, J.-M. Roquejoffre
and L. Rossi, a new model to 
describe biological invasions in the plane when a strong diffusion takes place on a line, given by (\ref{BRReq2}).

\begin{equation}
\label{BRReq2}
\begin{cases}
\partial_t u-D \partial_{xx} u= \nub v(t,x,0)-\mub u & x\in\R,\ t>0\\
\partial_t v-d\Delta v=f(v) & (x,y)\in\R\times\R^*,\ t>0\\
v(t,x,0^+)=v(t,x,0^-), & x\in\R,\ t>0 \\
-d\left\{ \partial_y v(t,x,0^+)-\partial_y v(t,x,0^-) \right\}=\mub u(t,x)-\nub v(t,x,0) & x\in\R,\ t>0.
\end{cases}
\end{equation}
A two-dimensional environment (the plane $\R^2$) includes a line (the line $\{(x,0),\quad x\in \R \}$) in which 
fast diffusion takes place while reproduction and usual diffusion only occur outside the line. For the sake of simplicity, we will refer 
to the plane as ``the field`` and the line as ``the road``, as a reference to the biological situations. 
The density of the population is designated by $v=v(t,x,y)$ in the field, 
and $u=u(t,x)$ on the road. Exchanges of densities take place between the field and 
the road: a fraction $\nu$ of individuals from the field at the road (i.e. $v(x,0,t)$) joins the road, while a fraction 
$\mu$ of the population on the road joins the field. The diffusion coefficient in the field is $d$, on the road $D$. Of course, 
the aim is to study the case $D>d$.
The nonlinearity $f$ is of Fisher-KPP type, i.e. strictly concave with $f(0)=f(1)=0$.
Considering a nonnegative, compactly supported initial datum
$(u_0,v_0)\neq(0,0)$, the main result of \cite{BRR1} was the existence of an  asymptotic speed of spreading $c^*$ in
the direction of the road. 
They also explained the dependence of $c^*$ on $D,$ the coefficient of diffusion on the road.
In their model, the line separates the plane in two half-planes which do not interact with each other, but only with the line. Moreover,
 interactions between a half-plane and the line occur only with the limit-condition in (\ref{BRReq2}). That is why, in ~\cite{BRR1}, 
the authors consider only a half-plane as the field.

New results on (\ref{BRReq2}) have been recently proved. Further effects like a drift or 
a killing term on the road have been investigated in \cite{BRR2}. The case of a fractional diffusion on the road was studied 
and explained by the three authors and A.-C. Coulon in \cite{BRRC} and \cite{these_AC}. Models with an ignition-type nonlinearity
are also studied by L. Dietrich in \cite{Dietrich1} and \cite{Dietrich2}.

Our aim is to understand what happens when local interactions are replaced by integral-type interactions: exchanges of populations may happen 
between the road and a point of the field, not necessarily at the road. The density of individuals who jump from a 
point of the field to the road is represented by $y\mapsto \nu(y)$, from the road to a point of the field by 
$y \mapsto \mu(y)$. This is a more general model than the previous one, but interactions still only occur in one dimension, the y-axis.
We are led to the following system: 
\begin{equation}
 \label{RPeq}
\begin{cases}
 \partial_t u-D \partial_{xx} u = -\mub u+\int \nu(y)v(t,x,y)dy & x \in \R,\ t>0 \\
 \partial_t v-d\Delta v = f(v) +\mu(y)u(t,x)-\nu(y)v(t,x,y) & (x,y)\in \R^2,\ t>0,
\end{cases}
\end{equation}
where $\mub = \int \mu(y)dy$, the parameters $d$ and $D$ are supposed constant positive, $\mu$ and $\nu$ are
supposed nonnegative, and $f$ is a reaction term of KPP type.
Using the notation $\nub=\int\nu,$ we can generalise this to exchanges given by boundary
conditions, with $\mu=\mub\d_0$ and $\nu=\nub\d_0.$
Hence, in the same vein as (\ref{RPeq}), it is natural to consider the following semi-limit model
\begin{equation}
\label{RPSL}
\begin{cases}
\partial_t u-D \partial_{xx} u=-\mub u +\int\nu(y)v(t,x,y)dy & x\in\R,\ t>0\\
\partial_t v-d\Delta v=f(v) -\nu(y)v(t,x,y) & (x,y)\in\R\times\R^*,\ t>0\\
v(t,x,0^+)=v(t,x,0^-), & x\in\R,\ t>0 \\
-d\left\{ \partial_y v(t,x,0^+)-\partial_y v(t,x,0^-) \right\}=\mub u(t,x) & x\in\R,\ t>0
\end{cases}
\end{equation}
where interactions from the road to the field are local whereas interactions from the field to the road are still nonlocal.
We also introduce te symmetrised semi-limit model, where nonlocal interactions are only from the road to the field.
\begin{equation}
\label{RPSL2}
\begin{cases}
\partial_t u-D \partial_{xx} u=-\mub u +\nub v(t,x,0) x\in\R,\ t>0\\
\partial_t v-d\Delta v=f(v) +\mu(y)u(t,x) & (x,y)\in\R\times\R^*,\ t>0\\
v(t,x,0^+)=v(t,x,0^-), & x\in\R,\ t>0 \\
-d\left\{ \partial_y v(t,x,0^+)-\partial_y v(t,x,0^-) \right\}=-\nub v(t,x,0) & x\in\R,\ t>0.
\end{cases}
\end{equation}
All these models are connected with each other, setting the scaling
$$
\nu_\e(y)=  \frac{1}{\e}\nu\lp\frac{y}{\e}\rp,\ \mu_\e(y)=  \frac{1}{\e}\mu\lp\frac{y}{\e}\rp.
$$
With this scaling, exchanges functions tends to Dirac functions, and integral exchanges tends formally to boundary conditions.
For example, the limit $\e\to0$ in (\ref{RPeq}) leads to the dynamics of (\ref{BRReq2}). This result will be investigating in \cite{Pauthier2}.
A similar study would yield to the same kind of convergence of systems (\ref{RPSL}) or (\ref{RPSL2}) to (\ref{BRReq2}).

Reaction-diffusion equations of the type 
$$
\partial_t u-d\Delta u=f(u)
$$
have been introduced in the celebrated articles of Fisher ~\cite{fisher} and Kolmogorov, Petrovsky and Piskounov ~\cite{KPP} in 1937.
The initial motivation came from population genetics. The reaction term are that of a logistic law, whose
archetype is $f(u)=u(1-u)$ for the simplest example. In their works in one dimension, 
Kolmogorov, Petrovsky and Piskounov revealed the existence of propagation waves, together with an asymptotic speed of spreading
of the dominating gene, given by $2\sqrt{df'(0)}$. The existence of an asymptotic speed of spreading was generalised in $\R^n$ 
by D. G. Aronson and H. F. Weinberger in ~\cite{AW} (1978). Since these pioneering works, front propagation in 
reaction-diffusion equations have been widely studied. Let us cite, for instance, the works of Freidlin and G\"artner \cite{FG}
for an extension to periodic media, or \cite{W2002}, \cite{BHN1} and \cite{BHN2} for more general domains.

\subsection{Assumptions}
We always assume that $u_0$ and $v_0$ are nonnegative, bounded and uniformly continuous, with $(u_0,v_0)\not\equiv(0,0)$. Our assumptions on the reaction term
are of KPP-type: 
$$
f\in C^1([0,1]), \ f(0)=f(1)=0, \ \forall s\in (0,1),\ 0<f(s)\leq f'(0)s.
$$
We extend it to quadratic negative function outside $[0,1].$ Our assumptions on the exchange terms will differ depending on the sections.
For the parts concerning the robustness of the results of \cite{BRR1}, that is Proposition \ref{liouville} and Theorem \ref{spreadingthm}, they are the following: 
\begin{itemize}
 \item $\mu$ is supposed to be nonnegative, continuous, and decreasing faster than an exponential function: $\exists M>0,\ a>0$ such that
$\forall y\in \R,\ \mu(y)\leq M\exp(-a|y|).$
 \item $\nu$ is supposed to be nonnegative, continuous and twice integrable, both in $+\infty$ and $-\infty$, id est 
\begin{equation}
 \label{nucond}
 \int_0^{+\infty}\int_x^{+\infty}\nu(y)dydx<+\infty, \  \int_{-\infty}^{0}\int_{-\infty}^x\nu(y)dydx<+\infty
\end{equation}
 \item We suppose $\mu,\nu \not\equiv 0$, $\nu(0)>0,$ and that both $\nu$ and $\mu$ tend to $0$ as $|y|$ tends to $+\infty.$
\end{itemize}

For the parts dealing with variations on the spreading speed, we suppose that $\nu$ and $\mu$ are either nonnegative, continuous, compactly supported even functions, either given by a Dirac measure, either
 the sum of a Dirac measure and a nonnegative, continuous, compactly supported even function.

\subsection{Results of the paper}
\subsubsection{Persistence of the results of \cite{BRR1}}
We start with the results that are similar in flavour to those of \cite{BRR1} concerning the system (\ref{BRReq2}) and showing the 
robustness of the threshold $D=2d$ which was brought out in the paper. The first one concerns the stationary solutions of (\ref{RPeq}) and
the convergence of the solutions to this equilibrium.
\begin{prop}
 \label{liouville}
 under the assumptions on $f$, $\nu$, and $\mu$, then: 
 \begin{enumerate}
  \item problem (\ref{RPeq}) admits a unique positive bounded stationary solution $(U_s,V_s)$, which is x-independent ;
  \item for all nonnegative and uniformly continuous initial condition $(u_0,v_0)$, the solution $(u,v)$ of (\ref{RPeq}) starting
  from $(u_0,v_0)$satisfies
  $\displaystyle
  \left(u(t,x),v(t,x,y)\right)\underset{t\to\infty}{\longrightarrow}(U_s,V_s)
  $
  locally uniformly in $(x,y)\in\R^2$.
 \end{enumerate}
\end{prop}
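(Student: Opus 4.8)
The plan is to treat (\ref{RPeq}) as a \emph{cooperative} reaction--diffusion system and to combine the method of sub- and supersolutions with a KPP-type sliding argument. The first thing I would record is that the coupling is order preserving: the term $\int\nu(y)v\,dy$ in the $u$-equation and the term $\mu(y)u$ in the $v$-equation both carry the correct sign, so raising either component raises the source of the other, while the diagonal terms $-\mub u$ and $-\nu(y)v$ are harmless. Together with the Lipschitz character of $f$ on bounded sets (and the positivity of the kernel $\nu$, which keeps even the nonlocal term monotone), this yields a comparison principle and hence an order-preserving semiflow. Every subsequent step rests on this.

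For existence I would exhibit a constant supersolution and a small subsolution. Seeking a constant pair $(\us,\vs)$, the $v$-inequality reduces to $f(\vs)+\mu(y)\us-\nu(y)\vs\le0$, which holds as soon as $f(\vs)+\|\mu\|_\infty\us\le0$; since $f$ is extended to a negative quadratic, $f(\vs)\to-\infty$, so taking $\vs$ large and $\us=\frac{\nub}{\mub}\vs$ satisfies this together with the $u$-inequality $-\mub\us+\nub\vs\le0$. For the subsolution I would use $\fp>0$ and $\nu(0)>0$ to build a small, $x$-compactly supported subsolution from the principal eigenfunction of the operator linearised at $(0,0)$ on a large box, whose principal eigenvalue is positive; this shows the zero state is swept away. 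Monotone iteration \emph{downward} from the constant supersolution keeps every iterate $x$-independent (the linearised solves have $x$-independent coefficients and data), so the maximal stationary solution is $x$-independent and solves the reduced problem in $y$, namely $0=-\mub U+\int\nu(y)V\,dy$ together with $-dV''=f(V)+\mu(y)U-\nu(y)V$; it dominates the subsolution, hence is positive.

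The heart of the proposition is uniqueness, from which $x$-independence of \emph{every} bounded positive solution follows for free: the stationary problem is invariant under $x$-translations, so once any two such solutions are shown to coincide, each equals all of its translates. To prove uniqueness I would first establish a \emph{uniform positive lower bound} for any bounded positive stationary solution --- this is where the instability of $0$ re-enters, through a sweeping argument forbidding the solution from approaching zero --- so that both solutions are trapped in a box $[\delta,\us]\times[\delta,\vs]$. With this in hand I would run the classical KPP sliding argument: for two solutions $(U_1,V_1)$, $(U_2,V_2)$ set $\theta^*=\inf\{\theta\ge1:\theta(U_1,V_1)\ge(U_2,V_2)\}$ and use the strict concavity of $f$ together with the strong maximum principle to force $\theta^*=1$, then symmetrise.

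Finally, the convergence statement follows by squeezing the solution of the evolution problem. By comparison the solution starting from $(u_0,v_0)$ eventually lies below the constant supersolution and, by the spreading/instability of $0$, eventually above a small subsolution; evolving both barriers under the flow and invoking the uniqueness just proved forces each to converge to $(U_s,V_s)$, so the solution does too, locally uniformly. I expect the main obstacle to be precisely the uniqueness step on the unbounded domain: the nonlocal term $\int\nu(y)v\,dy$ couples $u$ to the whole fibre $\{v(\cdot,y)\}_y$, so the sliding comparison must be closed while controlling this integral, and the uniform lower bound, immediate on a bounded domain, here demands a genuinely global argument.
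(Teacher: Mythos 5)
Your plan follows the paper's own route almost step for step: the comparison principle for the cooperative structure, the constant supersolution with $\us=\frac{\nub}{\mub}\vs$, the small compactly supported eigenfunction subsolution placed far from the road, monotone evolution from both barriers producing $x$-independent stationary solutions that sandwich the $\liminf$/$\limsup$ of any solution, uniqueness via the scaling argument (your $\theta^*$ is exactly the paper's $T_1$ in the proof of Proposition \ref{stationary2}, in the spirit of \cite{BHR}), and convergence by squeezing. Your remark that $x$-independence of \emph{every} solution follows from uniqueness plus translation invariance is correct and even slightly cleaner than the paper, which proves it directly for the constructed solutions by a small translation argument.

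However, the two points you flag as ``the main obstacle'' are precisely where the paper's proof does its real work, and your proposal names them without supplying any mechanism, so they stand as genuine gaps. First, the uniform positive lower bound (the paper's Lemma \ref{stationary3}) is not a single sweeping argument: it is proved in three steps --- a sweeping with translated Dirichlet eigenfunctions $\e\tau_M\phi_R$ placed where $\nu\leq\frac{1}{3}f'(0)$, swept in both the $y$- and $x$-directions, giving $\inf\{V(x,y),\ |y|>M\}>0$; then, for the infimum over all of $\R^2$, a translation--compactness argument: if $V(x_n,y_n)\to 0$, translate, extract a limit by elliptic estimates, and apply the strong maximum principle to the limiting system, contradicting the first step; finally a lower bound for $U$ via convolution with the fundamental solution of $-U''+\frac{\mub}{D}U=\phi$. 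Second --- and this is the step your sketch cannot ``close'' as stated --- at $\theta=\theta^*$ there need not exist any finite contact point: the infimum of $\theta^*(U_1,V_1)-(U_2,V_2)$ may be approached only along sequences escaping to infinity, and the strong maximum principle requires an actual touching point. The paper resolves this with a five-case analysis: when the contact escapes to infinity one passes to translation limits, and either recovers a pair of solutions of the same stationary system with a genuine finite contact point (contact at infinity in $x$), or, when $|y_n|\to\infty$, uses the decay $\mu,\nu\to 0$ to reduce to the pure KPP equation $-d\Delta \tilde{V}_i=f(\tilde{V}_i)$, where concavity again forces $\theta^*=1$. Note finally that, contrary to your diagnosis, the nonlocal term is not the enemy in the sliding step: cooperativity keeps it harmless, and in the paper's Case 2 it even helps, since $\d U\equiv 0$ forces $\int\nu\,\d V=0$ and hence a finite contact point for $V$; the difficulty is purely the non-compactness of $\R^2$.
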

The second and main result deals with the spreading in the $x$-direction: we show the existence of an
asymptotic speed of spreading $c^*$ such that the following Theorem holds
 \begin{theorem}\label{spreadingthm}
Let $(u,v)$ be a solution of (\ref{RPeq}) with a nonnegative, compactly supported initial datum $(u_0,v_0)$.
Then, pointwise in $y$, we have: 
 \begin{itemize}
  \item for all $c>c^*$, $\displaystyle\lim_{t\to\infty}\sup_{|x|\geq ct}(u(x,t),v(x,y,t)) = (0,0)$ ;
  \item for all $c<c^*$, $\displaystyle\lim_{t\to\infty}\sup_{|x|\leq ct}\lp(u(x,t),v(x,y,t))-\lp U_s,V_s(y)\rp \rp =(0,0) $.
 \end{itemize}
\end{theorem}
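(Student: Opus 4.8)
The whole argument rests on the observation that (\ref{RPeq}) is a \emph{cooperative} system: the exchange terms $\int\nu(y)v\,dy$ and $\mu(y)u$ enter with nonnegative sign, so the parabolic maximum principle applies to the pair $(u,v)$ and yields both a comparison principle between sub- and supersolutions and the strict positivity of $(u,v)$ for $t>0$. Since $f$ is of KPP type we have $f(v)\le\fp v$, so every solution of (\ref{RPeq}) is a subsolution of the system linearised at $(0,0)$; the linear problem therefore controls the propagation from above and dictates the value of $c^*$.

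To define $c^*$ I would look for exponential solutions of the linearised system of the form $(u,v)=e^{\lambda(ct-x)}(a,\psi(y))$ with $\lambda>0$. Substituting and cancelling the exponential shows that $\lambda c$ must be the principal eigenvalue $k(\lambda)$, with positive eigenvector $(a,\psi_\lambda)$, of the operator
\[
\mathcal{M}_\lambda(a,\psi)=\Big(\lp D\lambda^2-\mub\rp a+\int\nu(y)\psi(y)\,dy,\ \ d\psi''+\lp d\lambda^2+\fp-\nu(y)\rp\psi+\mu(y)a\Big).
\]
The off-diagonal terms being nonnegative, a Krein--Rutman / generalised principal eigenvalue argument produces such a $k(\lambda)$ together with a positive bounded eigenvector; it is precisely the twice-integrability (\ref{nucond}) of $\nu$ and the exponential decay of $\mu$ that guarantee $\int\nu\psi_\lambda$ converges and that $\psi_\lambda$ is admissible. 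One then sets $c^*=\min_{\lambda>0}k(\lambda)/\lambda$ and checks, from the behaviour of $k(\lambda)$ as $\lambda\to0^+$ (field-dominated, $k(\lambda)/\lambda\to+\infty$) and $\lambda\to+\infty$ (road-dominated, $k(\lambda)\sim D\lambda^2$), that the minimum is attained and positive; this is also where the threshold $D=2d$ should reappear, according to whether the minimiser is governed by the field or by the road.

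For the upper bound $c>c^*$ I would pick $\lambda$ with $k(\lambda)/\lambda<c$ and use $Ke^{\lambda(k(\lambda)t/\lambda-x)}(a,\psi_\lambda(y))$ as a supersolution of the linearised, hence of the nonlinear, problem, together with its mirror image in $x$. Choosing $K$ large enough to dominate the compactly supported datum $(u_0,v_0)$ at $t=0$, the comparison principle gives a bound of the form $(u,v)\le K\,e^{\lambda(k(\lambda)t/\lambda-|x|)}(a,\psi_\lambda)$; since $k(\lambda)/\lambda<c$ this barrier tends to $0$ uniformly on $\{|x|\ge ct\}$, which is the first assertion (pointwise in $y$, as $\psi_\lambda(y)$ is then a fixed constant).

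The lower bound $c<c^*$ is the crux and the main obstacle. The strategy is first to prove complete invasion in the moving frame, $\liminf_{t\to\infty}\inf_{|x|\le ct}(u,v)>0$, and then to upgrade this to convergence to $\lp U_s,V_s\rp$. For the persistence I would fix $c<c'<c^*$ and build a subsolution supported in a box travelling at speed $c$, obtained from the positive principal eigenfunction of the linearised operator truncated to a large $x$-interval times the $y$-ground state, scaled small so that the KPP term keeps it a genuine subsolution; the spectral gap $c'<c^*$ is exactly what makes such a moving subsolution exist, and it forces $(u,v)$ to stay bounded below on $\{|x|\le ct\}$. The delicate points are that the nonlocal term $\int\nu v$ links distant values of $v$, so the truncation in $y$ has to be controlled, and that the eigenvalue problem lives on the unbounded $y$-axis. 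Once persistence is secured, I would combine it with Proposition \ref{liouville}: the global attractivity of the unique steady state $\lp U_s,V_s\rp$, together with the uniform lower bound and a sliding comparison in the moving frame, yields $(u,v)\to\lp U_s,V_s\rp$ on $\{|x|\le ct\}$. I expect the construction of this nonlocal subsolution, and the verification that $c^*$ is linearly determined so that the two bounds coincide, to be the heart of the proof.
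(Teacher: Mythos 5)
Your overall architecture (exponential supersolutions for $c>c^*$; a compactly supported subsolution in the moving frame plus Proposition \ref{liouville} for $c<c^*$) is the same as the paper's, but both of the devices you rely on are exactly where the paper has to work, and both have gaps. For the definition of $c^*$: the eigenvalue $k(\la)$ with a positive eigenvector does not exist for all $\la>0$. The $y$-domain is unbounded, so Krein--Rutman does not apply, and the $v$-equation carries essential spectrum up to $d\la^2+\fp$. Integrating the $\phi$-equation shows $\int\nu\phi<\mub$ whenever it is solvable, so a genuine positive solution of both equations forces $d\la^2+\fp<\la c<D\la^2$, hence $(D-d)\la^2>\fp$: no positive eigenvector exists at all when $D\le d$, and for $D\le 2d$ the infimum of admissible speeds is $c_{KPP}$ but is \emph{not} attained (this is the content of (\ref{existencegamma2}) and Lemma \ref{lemhomo}). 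The paper avoids the eigenvalue formulation: it normalises the road component to $1$, solves the second equation of (\ref{eqgeneralesurphi}) as an \emph{inhomogeneous} problem by Lax--Milgram, and reduces everything to the intersection (or the inequality $\Psi_1\geq\Psi_2$, which is what saves the case $D\le 2d$) of the curves $\Gamma_1,\Gamma_2$; the real work is the behaviour of $\Psi_2$ near $\la_2^-$ (Proposition \ref{sourire}, Lemmas \ref{lemeps} and \ref{lemhomo}), which is where hypothesis (\ref{nucond}) and the decay of $\mu$ are actually used --- you name these hypotheses but never deploy them.

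The serious gap is the lower bound. Your proposed subsolution, a positive ``principal eigenfunction on a truncated $x$-interval times the $y$-ground state,'' works only when it decouples from the road: that is precisely the paper's construction for $D\le 2d$, where the subsolution is $(0,\underline{V})$ with $\underline{V}(x,y)=e^{cx/2d}\cos(\omega x)\psi_R(|y|-M)$, and it can never yield a speed beyond $c_{KPP}$. When $D>2d$ and $c\in(c_{KPP},c^*)$ --- the whole point of the theorem --- any subsolution must use the road, and a positive separated ansatz is impossible: the drifts in the two components would require the incompatible tiltings $e^{cx/2D}$ and $e^{cx/2d}$, and indeed Proposition \ref{spreading_speed} says the moving-frame linearisation admits \emph{no} positive exponential solution for $c<c^*$. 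Moreover, your claim that ``the spectral gap $c'<c^*$ is exactly what makes such a moving subsolution exist'' is the assertion that the sign of the truncated Dirichlet principal eigenvalue of the coupled system changes precisely at $c^*$; for a non-symmetrisable system whose two components live on domains of different dimension this identification is itself a theorem (it was established only in later work on generalized principal eigenvalues for road-field systems), not a consequence of the definition of $c^*$. The paper's route (Proposition \ref{soussol}, following \cite{BRR1}) is entirely different: work in a strip $\R\times(-L,L)$, show $\Gamma_1$ and $\Gamma_2^L$ become tangent at $c_*^L\nearrow c_*$, use analyticity and Rouch\'e's theorem to produce, for $c$ slightly below $c_*^L$, \emph{complex} $\la$ with imaginary part $O(\sqrt{c_*^L-c})$, and take the positive part of the real part of the resulting complex exponential on one connected component of its positivity set. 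Since you explicitly defer ``the construction of this nonlocal subsolution'' and the verification that the two thresholds coincide, the heart of the proof is missing from your proposal.
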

Because $f$ is a KPP-type reaction term,
 it is natural to look for positive solutions of the linearised system  
 \begin{equation}
 \label{RPli}
\begin{cases}
 \partial_t u-D \partial_{xx} u = -\mub u+\int \nu(y)v(t,x,y)dy & x \in \R,\ t>0 \\
 \partial_t v-d\Delta v = f'(0)v +\mu(y)u(t,x)-\nu(y)v(t,x,y) & (x,y)\in \R^2,\ t>0.
\end{cases}
\end{equation}
 We will construct exponential travelling waves and use them 
 to compute the asymptotic speed of spreading in the $x$-direction. Theorem \ref{spreadingthm} relies on the following Proposition:
 \begin{prop}
\label{spreading_speed}
\begin{enumerate}
  \item There exists a limiting velocity $c_*$, depending on $D$ and $d$, such that $\forall c>c^*,\ \exists \la>0,\
 \exists \phi\in H^1(\R)$ positive such that
 $\displaystyle
 (t,x,y)\mapsto e^{-\la(x-ct)} \begin{pmatrix}
                  1 \\ \phi(y)
                 \end{pmatrix}
 $
 is a solution of (\ref{RPli}). No such solution exists if $c<c^*.$
  \item If $D\leq 2d$, then $c_*=c_{KPP}=2\sqrt{df'(0)}$. If $D>2d$, then $c_*>c_{KPP}.$
\end{enumerate}
\end{prop}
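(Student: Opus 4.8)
The plan is to look for exponential solutions of the linearised system~(\ref{RPli}) and to reduce the whole question to a scalar dispersion relation in the decay rate $\la$. Plugging the ansatz $u=e^{-\la(x-ct)}$, $v=e^{-\la(x-ct)}\phi(y)$ into~(\ref{RPli}) and cancelling the common exponential, the road equation collapses to the scalar constraint
\[
\la c-D\la^2+\mub=\int\nu(y)\phi(y)\,dy,
\]
while the field equation becomes the one–dimensional ODE
\[
-d\phi''+\lp\gamma+\nu(y)\rp\phi=\mu(y),\qquad \gamma:=\la c-d\la^2-\fp .
\]
Thus, at fixed $\la$ and $c$, everything is driven by the single parameter $\gamma$. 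First I would solve the ODE: when $\gamma>0$ the operator $-d\partial_{yy}+\gamma+\nu$ is coercive on $H^1(\R)$ (this is where $\nu\geq0$ matters), hence invertible, and because its Green kernel is positive while $\mu\geq0$, $\mu\not\equiv0$, the maximum principle yields a unique positive $\phi_\gamma\in H^1(\R)$; the fast decay of $\mu$ guarantees $\phi_\gamma$ and $\int\nu\phi_\gamma$ are well defined, and $\nu(0)>0$ keeps the coupling genuinely active so that $\int\nu\phi_\gamma>0$. When $\gamma\le0$ the exterior solutions become non-decaying (oscillatory for $\gamma<0$), so no positive $H^1$ profile exists; this already pins the sign condition $\gamma>0$.

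Next I would extract the dispersion relation. Integrating the ODE over $\R$ and using $\phi_\gamma'(\pm\infty)=0$ gives the identity $\int\nu\phi_\gamma=\mub-\gamma\int\phi_\gamma$; inserting this into the scalar constraint and eliminating $\la c$ via $\gamma=\la c-d\la^2-\fp$ turns the two equations into the single relation
\[
\gamma\lp 1+\int\phi_\gamma\rp=(D-d)\la^2-\fp .
\]
The monotonicity facts — $\gamma\mapsto\int\phi_\gamma$ is positive and decreasing, and $\gamma\int\phi_\gamma=\mub-\int\nu\phi_\gamma$ increases from $0$ to $\mub$ — show that, at fixed $\la$, this relation has a unique root $\gamma>0$ exactly when $(D-d)\la^2>\fp$, i.e. for $\la>\la_c:=\sqrt{\fp/(D-d)}$, with $c=c(\la):=d\la+\fp/\la+\gamma(\la)/\la$. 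For $\la\le\la_c$ the admissible value degenerates to $\gamma=0$, giving $c(\la)=d\la+\fp/\la$, the pure field–KPP dispersion. Setting $c^*:=\min_{\la>0}c(\la)$, the blow-up of $c(\la)$ as $\la\to0^+$ and $\la\to\infty$ makes the minimum attained; since $c(\la)\ge c^*$ for all $\la$, no exponential solution exists for $c<c^*$, while for every $c>c^*$ some $\la$ realises it. This is the dichotomy of part~(1). Here condition~(\ref{nucond}) plays a decisive role: it is precisely the first–moment bound ensuring $\gamma\int\phi_\gamma\to0$ as $\gamma\to0^+$, so that the coupled branch connects continuously to the field branch $d\la+\fp/\la$ at $\la=\la_c$.

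For part~(2) the strategy is to compare the two branches against the classical KPP parabola. On the whole range one has $c(\la)\ge d\la+\fp/\la\ge c_{KPP}=2\sqrt{d\fp}$, the last minimum being attained at $\la_{KPP}=\sqrt{\fp/d}$, and the first inequality being strict exactly when $\gamma>0$. The position of the transition relative to the KPP rate is governed by the elementary equivalence $\la_c\le\la_{KPP}\iff D\ge2d$. If $D\le2d$, then $\la_{KPP}\le\la_c$ lies in the field regime, where $c(\la)=d\la+\fp/\la$ already attains $c_{KPP}$, so $c^*=c_{KPP}$. If $D>2d$, then $\la_{KPP}$ falls in the coupled regime where $\gamma>0$ strictly; combined with $d\la_c+\fp/\la_c>c_{KPP}$ on the field side, every $\la$ gives $c(\la)>c_{KPP}$, whence $c^*>c_{KPP}$.

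I expect the main obstacle to be this last comparison, and specifically the behaviour at the edge $\gamma=0$. One must (i) justify rigorously, using~(\ref{nucond}), the $\gamma\to0^+$ asymptotics that make $c(\la)$ continuous across $\la_c$ and let the field regime be reached as a limiting profile; (ii) control the sign of $c'(\la)$ at the junction — the computation should yield $c'(\la_c)=2d-D$, which is the analytic incarnation of the threshold; and (iii) rule out that the coupled branch dips back down to $c_{KPP}$, which rests on the strict positivity $\gamma>0$ together with the monotonicity of $\gamma\mapsto\gamma\int\phi_\gamma$ established above. These three points are where the $D=2d$ threshold is genuinely earned.
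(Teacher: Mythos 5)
Your reduction to a scalar dispersion relation is a genuinely different organization from the paper's proof: where the paper studies the intersection of the parabola $\Gamma_1$ with the convex curve $\Gamma_2=\{(\la,\int\nu\phi)\}$ (Proposition \ref{sourire}), you collapse the two equations into $F(\gamma):=\gamma\lp 1+\int\phi_\gamma\rp=(D-d)\la^2-\fp$ and exploit the strict monotonicity of $F$. The algebra is right, and in the regime $D\geq 2d$ this route does deliver both parts of the Proposition, with essentially one hard analytic input: $\gamma\int\phi_\gamma\to 0$ as $\gamma\to 0^+$. (That fact is the first part of the paper's Lemma \ref{lemeps}; note it is proved there from the exponential decay of $\mu$ and $\nu(0)>0$ via a boxcar comparison, not from (\ref{nucond}), which is needed only for the second part of that lemma. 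Relatedly, your claim $c'(\la_c)=2d-D$ is not a mere computation: it requires $\gamma'(\la_c^+)=0$, i.e. $F'(0^+)=+\infty$, which is exactly the derivative blow-up of Lemma \ref{lemeps}, resting on Lemma \ref{lemhomo} and (\ref{nucond}); fortunately your main line of argument does not actually use it.)

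The genuine gap is the case $D<2d$. Your ``degenerate branch'' $\{\la\leq\la_c,\ \gamma=0,\ c=d\la+\fp/\la\}$ does not consist of solutions of (\ref{RPli}): at $\gamma=0$ the profile solving $-d\phi''+\nu\phi=\mu$ tends to positive constants at $\pm\infty$ (this is the content of Lemma \ref{lemhomo} and Corollary \ref{minoration}), so it is not in $H^1$; worse, integrating that equation gives $\int\nu\phi=\mub$, so the road equation then forces $c=D\la$, which is compatible with $c=d\la+\fp/\la$ only at $\la=\la_c$. In fact your own relation shows that for $\la<\la_c$ \emph{no} speed admits an exponential solution. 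Consequently, for $D\leq d$ there are no exponential solutions at all, and for $d<D<2d$ every genuine solution has $c(\la)>d\la+\fp/\la>d\la_c+\fp/\la_c>c_{KPP}$, so no exponential solution exists for speeds just above $c_{KPP}$. Thus ``for every $c>c^*$ some $\la$ realises it'' is false for $D<2d$ if you set $c^*=c_{KPP}$, while if you instead define $c^*$ as the infimum of genuine solution speeds, part (2) fails. The paper's proof handles exactly this regime by downgrading to \emph{supersolutions}: for $D<2d$ and any $c>c_{KPP}$ one has $\la_2^-<c/D$, and for $\la$ in the left part of $]\la_2^-,\la_2^+[$ the strict inequality $\Psi_1(\la)>\mub>\Psi_2(\la)$ holds, so $e^{-\la(x-ct)}(1,\phi)$ is a supersolution of (\ref{RPli}) --- which is what Theorem \ref{spreadingthm} actually needs. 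Your argument never produces these supersolutions, and this is the missing piece you would have to add to make the $D\leq 2d$ half of the statement (in the sense in which the paper uses it) come out.
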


Thesee three results easily extend to the two semi-limit models (\ref{RPSL}) and (\ref{RPSL2}). We will develop some proofs only for the 
system (\ref{RPSL}), the other being easier.

\subsubsection{Effect of the nonlocal exchanges on the spreading speed}
Given all these connected models, a natural question is to understand how different exchange terms influence the propagation. 
One possible way to see it is to ask if, with similar parameters, some exchange functions give slower or faster spreading speed than other.
Our results deal with maximal or locally maximal spreading speed. 
Throughout the end of the paper, we consider the set of admissible exchange functions from the road to the field for fixed $\mub$
$$
\Lambda_{\mub} = \{\mu\in C_0(\R),\mu\geq 0, \int\mu=\mub,\mu \textrm{ is even} \}.
$$
Of course, we define $\Lambda_{\nub}$ in a similar fashion. The first result is devoted to the semi-limit case (\ref{RPSL2}),
where the exchange $\nu$ is a Dirac measure at $y=0$, and $\mu$ is nonlocal. For fixed constants $d,D,\nub,\fp,$ for any function $\mu\in\Lambda_{\mub},$ let
$c^*(\mu)$ be the spreading speed associated to the semi-limit system (\ref{RPSL2}) with 
exchange function from the road to the field $\mu.$ Then we have the following property.

\begin{prop}\label{vitessemaxRPSL2}
 Let $c^*_0$ the spreading speed associated with the limit system (\ref{BRReq2}) with the same parameters and exchange rate
 from the road to the field $\mub.$ Then:
 $$
 c^*_0=\sup\{c^*(\mu),\ \mu\in\Lambda_{\mub}\}.
 $$
\end{prop}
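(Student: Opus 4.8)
The plan is to reduce the existence of an exponential travelling wave to a scalar dispersion relation and then to exploit a pointwise monotonicity of its right-hand side in the exchange function $\mu$. First I would look for a solution of the linearised version of (\ref{RPSL2}) of the form $e^{-\la(x-ct)}(1,\phi(y))$. Inserting this into the road equation gives $\la c-D\la^2+\mub=\nub\phi(0)$, while the field equation becomes, for $y\neq0$, the ODE $-d\phi''+\beta^2\phi=\mu(y)$ with $\beta^2=d\la^2-\la c-\fp$, supplemented by the transmission conditions $\phi(0^+)=\phi(0^-)$ and $d\lp\phi'(0^+)-\phi'(0^-)\rp=\nub\phi(0)$. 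Rewriting this as the distributional identity $-d\phi''+\beta^2\phi=\mu-\nub\phi(0)\delta_0$ on $\R$ and convolving with the Green function $G(y)=\frac{1}{2\sqrt d\,\beta}e^{-\frac{\beta}{\sqrt d}|y|}$, I would obtain
$$\phi(0)=\frac{\widehat\mu(\beta)}{2\sqrt d\,\beta+\nub},\qquad \widehat\mu(\beta):=\int_\R e^{-\frac{\beta}{\sqrt d}|y|}\mu(y)\,dy.$$
Hence, for $\beta>0$, such an exponential solution exists if and only if $\Psi_c^\mu(\la):=\la c-D\la^2+\mub-R_\mu(\beta)$ vanishes for some admissible $\la$, where $R_\mu(\beta):=\nub\widehat\mu(\beta)/(2\sqrt d\,\beta+\nub)$.

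By Proposition \ref{spreading_speed}, the speed $c^*(\mu)$ is exactly the threshold for the existence of such a solution; since $\Psi_c^\mu$ is negative at both endpoints of the admissible range (as $\beta\to0^+$, where $R_\mu\to\mub$, and as $\la\to+\infty$, where $\Psi_c^\mu\to-\infty$), a zero exists iff $\sup_\la\Psi_c^\mu(\la)\geq0$, so that
$$c^*(\mu)=\inf\left\{c>0:\ \sup_{\la}\Psi_c^\mu(\la)\geq0\right\}.$$
Carrying out the same computation for the limit system (\ref{BRReq2}), whose road-to-field exchange is the Dirac mass $\mu_\delta:=\mub\delta_0$, I would recover the \emph{identical} relation but with $\widehat\mu$ replaced by the constant $\mub$, i.e. $R_{\mu_\delta}(\beta)=\nub\mub/(2\sqrt d\,\beta+\nub)$ and $c^*_0=\inf\{c:\ \sup_\la\Psi_c^{\mu_\delta}(\la)\geq0\}$.

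The crux is the elementary inequality $\widehat\mu(\beta)\leq\int_\R\mu=\mub$, valid for every nonnegative $\mu$ because $e^{-\frac{\beta}{\sqrt d}|y|}\leq1$. It yields $R_\mu\leq R_{\mu_\delta}$ and therefore $\Psi_c^\mu\geq\Psi_c^{\mu_\delta}$ pointwise, whence $\{c:\sup_\la\Psi_c^{\mu_\delta}\geq0\}\subseteq\{c:\sup_\la\Psi_c^\mu\geq0\}$ and consequently $c^*(\mu)\leq c^*_0$ for every $\mu\in\Lambda_{\mub}$; this proves $\sup_\mu c^*(\mu)\leq c^*_0$. For the reverse inequality I would pick an approximating sequence $\mu_n\in\Lambda_{\mub}$ concentrating at the origin (a mass-$\mub$, even, compactly supported mollifier): then $\widehat{\mu_n}(\beta)\to\mub$ locally uniformly in $\beta$, so $\Psi_c^{\mu_n}\to\Psi_c^{\mu_\delta}$, and a continuity argument would force $c^*(\mu_n)\to c^*_0$, giving $\sup_\mu c^*(\mu)\geq c^*_0$ and hence the claimed equality.

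The main obstacle is this last continuity step: passing the supremum over $\la$ through the limit $n\to\infty$ requires confining the maximiser to a compact $\la$-interval uniformly in $n$ (here one uses that $R_{\mu_n}\geq0$, so $\Psi_c^{\mu_n}(\la)\leq\la c-D\la^2+\mub\to-\infty$ uniformly in $n$) and controlling the behaviour near $\beta=0$, where all the $R_{\mu_n}$ already coincide with $R_{\mu_\delta}$. The reduction to the scalar relation is otherwise routine, but the treatment of the transmission condition as a Dirac source in the Green-function representation must be done with care in order to make the comparison $\Psi_c^\mu\geq\Psi_c^{\mu_\delta}$ fully rigorous.
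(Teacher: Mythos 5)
Your proposal is correct and follows essentially the same route as the paper: your Green-function identity $\phi(0)=\widehat\mu(\beta)/(2\sqrt{d}\,\beta+\nub)$ is exactly the paper's formula (\ref{formulePsi2RPSL2}) obtained by variation of constants, your comparison via $e^{-\beta|y|/\sqrt{d}}\leq 1$ is the paper's inequality (\ref{ineqRPSL2}), and your concluding mollifier sequence at $c=c^*_0$ is precisely the paper's use of $\mu_\e(y)=\frac{1}{\e}\mu\lp\frac{y}{\e}\rp$ with locally uniform convergence of $\Psi_2$. Only fix the sign typo in your dispersion relation: on the admissible range one needs $\beta^2=\la c-d\la^2-\fp=P(\la)>0$, not $\beta^2=d\la^2-\la c-\fp$ (your subsequent formulas already use the correct convention).
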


The second main result is concerned with the other semi-limit case (\ref{RPSL}), 
where the exchange $\mu$ is a Dirac measure, and $\nu$ is nonlocal ; in our study, 
we consider $\nu$ close to a Dirac measure.
Let the exchange term $\nu$ be of the form
\begin{equation}\label{nuperturbe}
 \nu(y)=(1-\e)\d_0+\e\up(y)
\end{equation}
where 
\begin{equation*}
 \up\in\Lambda_1:=\{\up\in C_0(\R),\up\geq 0, \int\up=1,\up \textrm{ is even} \}.
\end{equation*}

\begin{theorem}\label{thmdenonacceleration}
For some $\up\in\Lambda_1,$ $\e>0,$ let us consider an exchange function of the form (\ref{nuperturbe}). 
Let $c^*(\nu)$ be the spreading speed associated to (\ref{RPSL}) with exchange function $\nu,$ and $c^*_0$ the
one associated to (\ref{BRReq2}) with same parameters.
 There exist $m_1>2$ depending on $\fp$, $M_1$ depending on $\mub$ such that:
 \begin{enumerate}
   \item if $D<m_1$ there exist $\e_0$ and $\up\in\Lambda_1$ such that $\forall \e<\e_0,$ 
  $c^*_0<c^*(\nu);$ 
  \item if  $\mub>4$ and $D,\fp>M_1$ there exists $\e_0$ such that $\forall \up\in\Lambda_1,$ $\forall \e<\e_0,$
$c^*_0>c^*(\nu).$ 
 \end{enumerate}
\end{theorem}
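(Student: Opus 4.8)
The plan is to read off $c^*(\nu)$ from the exponential waves of Proposition \ref{spreading_speed} for the system (\ref{RPSL}), and then to treat $\e$ as a small parameter around the purely local case $\nu=\d_0$. Substituting the ansatz $e^{-\la(x-ct)}(1,\phi(y))^{T}$ into the linearisation of (\ref{RPSL}) with $\nu=(1-\e)\d_0+\e\up$, and using that the Dirac part of $\nu$ only contributes to the flux jump at $y=0$ whereas $\e\up$ enters as a bulk loss, the transverse profile solves
\[
-\phi''+\gamma^2\phi=-\tfrac{\e}{d}\,\up\,\phi\quad(y\neq0),\qquad -2d\,\phi'(0^+)=\mub-(1-\e)\phi(0),
\]
with $\gamma^2=\frac{\la c-\fp}{d}-\la^2$ and $\phi\to0$ at infinity, coupled to the road relation
\[
\mathcal D(\la,c;\e):=\la c-D\la^2+\mub-(1-\e)\phi(0)-\e\!\int\up\,\phi=0 .
\]
At $\e=0$ the potential disappears, $\phi_0=A_0e^{-\gamma|y|}$ with $A_0=\mub/(2d\gamma+1)$, and $\mathcal D(\la,c;0)=0$ is exactly the dispersion relation of (\ref{BRReq2}) with $\nub=1$; hence $c^*(\nu)|_{\e=0}=c^*_0$, realised at the \cite{BRR1} minimiser $(\la^*,c^*_0)$ of $c=c(\la)$.

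First I would run an envelope computation. Since $c^*(\nu)=\min_\la c(\la;\e)$ and $\partial_\la c=0$ at the minimiser, $\frac{d}{d\e}c^*(\nu)\big|_{0}=-\partial_\e\mathcal D/\partial_c\mathcal D$ at $(\la^*,c^*_0)$. A direct check gives $\partial_c\mathcal D|_0=\la^*\lp1+\frac{\mub}{\gamma^*(2d\gamma^*+1)^2}\rp>0$, so the sign of the speed shift is opposite to that of $\partial_\e\mathcal D|_0$. Solving the first-order profile equation with the Robin Green's function of $-\partial_{yy}+\gamma^2$ on $(0,\infty)$ — whose trace on the road is the $\up$-independent kernel $\frac{2d}{2d\gamma+1}e^{-\gamma y}$ — I obtain $\partial_\e\mathcal D|_0=A_0\,B(\up)$ with
\[
B(\up)=\frac{2d\gamma^*+2\int_0^\infty\up\, e^{-2\gamma^* y}dy}{2d\gamma^*+1}-2\int_0^\infty\up\, e^{-\gamma^* y}dy,
\]
so that the whole question reduces to the sign of $B(\up)$.

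The key step is that $B$ is affine in $\up$, so its infimum over $\Lambda_1$ is attained on symmetric point masses $\tfrac12(\d_r+\d_{-r})$; with $s=e^{-\gamma^* r}\in(0,1]$ this collapses to $B=\frac{(s-1)(s-2d\gamma^*)}{2d\gamma^*+1}$. As $(s-1)\le0$, the sign is dictated by whether $2d\gamma^*\gtrless1$: if $2d\gamma^*\ge1$ then $B\ge0$ for every admissible $\up$ (equality only in the excluded limit $\up\to\d_0$), giving a strict deceleration for all $\up$; if $2d\gamma^*<1$, choosing $s\in(2d\gamma^*,1)$, i.e. $\up$ concentrated near a suitable $r>0$, makes $B<0$ and accelerates. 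It then remains to locate $2d\gamma^*=1$ in parameter space through the \cite{BRR1} minimiser: one has $\gamma^*=0$ for $D\le2d$, with $\gamma^*$ increasing past $1/(2d)$ at some $D=m_1>2d$ (giving part (1)), while an asymptotic analysis of $(\la^*,c^*_0,\gamma^*)$ for large $D,\fp$ (with scaling $\la^*\sim D^{-1/2}$, $c^*_0\sim D^{1/2}$, $\gamma^*$ bounded) produces the explicit thresholds $\mub>4$, $D,\fp>M_1$ forcing $2d\gamma^*>1$ (giving part (2)).

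Two points will demand the most care. The degenerate range $D\le2d$, where $\gamma^*=0$ and the point-mass reduction forces $s=1$ so that $B\equiv0$: there the first variation is inconclusive and part (1) must be recovered either by carrying the expansion to second order or by a direct comparison of the field-dominated waves, while also justifying differentiability of $c^*(\nu)$ at this degenerate minimiser. And part (2) requires a single $\e_0$ valid for all $\up$, whereas $B(\up)\to0^+$ as $\up\to\d_0$, so a first-order estimate gives no uniform threshold; I would instead argue non-perturbatively, showing that when $2d\gamma>1$ one has $\partial_\e\mathcal D>0$ for all small $\e$, all $\up$ and all relevant $(\la,c)$, so the entire curve $c(\la;\e)$ — and hence its minimum — shifts strictly downward, which simultaneously supplies the uniform control. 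The remaining verifications (positivity and $H^1$-integrability of the perturbed profile, and the spreading characterisation for (\ref{RPSL})) are routine consequences of Proposition \ref{spreading_speed}.
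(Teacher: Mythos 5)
Your core argument is the paper's argument: with the paper's normalisation $d=\nub=1$, your $B(\up)$ is exactly the functional $\int_\R\up(y)\,g(\a^*,y)\,dy$ appearing in (\ref{difference2}), your dichotomy $2d\gamma^*\gtrless 1$ is the paper's $\a^*\gtrless\frac12$, and the parameter analysis is the same (an explicit sufficient bound $D<m_1$ as in (\ref{conditionm1}) for part (1); the large $D,\fp$ asymptotics of Proposition \ref{asymptoticDfp} combined with the tangency identity (\ref{egalitederivees}) for part (2)); only the packaging differs (envelope identity for $c^*$ and point-mass extremal reduction, versus the paper's comparison of $\Psi_2$ with $\Psi_2^0$ near $\la^*$ followed by monotonicity in $c$).

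Two remarks on your ``points of care''. First, your plan to recover part (1) on the range $D\le 2d$ (second-order expansion, or direct comparison) cannot succeed: there strict acceleration is impossible. Indeed, integrating the profile equation gives $\Psi_2(\la)=\mub-P(\la)\int\phi<\mub$ for any exchange of the form (\ref{nuperturbe}), while for $D\le 2d$ and every $c>c_{KPP}$ one has $\la_2^-(c)<c/D$, so $\Psi_1(\la)\ge\mub>\Psi_2(\la)$ for $\la$ just above $\la_2^-$; exponential supersolutions therefore exist at every speed above $c_{KPP}$, and $c^*(\nu)=c_{KPP}=c^*_0$. The statement is only meaningful for $2d<D<m_1$, which is the regime your main argument and the paper's proof (which presupposes the tangency point $(\la^*,c^*_0)$, existing only when $D>2d$) actually cover.

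Second, your uniformity worry in part (2) is genuine, and it is equally a gap in the paper: Lemma \ref{bornephi1} makes the error in (\ref{difference2}) an $O(\e^2)$ uniform over $\Lambda_1$, but the leading coefficient $\int\up g(\a^*,\cdot)$ tends to $0$ as $\up$ concentrates at the origin (where $g$ vanishes), so the argument only yields $\e_0=\e_0(\up)$. Your proposed repair, positivity of $\partial_\e\mathcal{D}$ for all small $\e$ uniformly in $\up$, meets exactly the same degeneration at every $\e$ and so does not close the gap as stated. What is needed is a two-scale analysis comparing $\e$ with the concentration scale $\rho$ of $\up$: when $\rho\ll\e$ the perturbation is no longer small in total variation relative to its effect and behaves like the self-similar approximations of Section 8; an explicit boxcar computation in that regime gives a first-order coefficient proportional to $\lp\frac12-\frac{\e}{3}-\a^*\rp$, again negative when $\a^*>\frac12$, so the uniform claim is plausibly true, but neither your sketch nor the paper proves it.
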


\subsection{Outline and discussion}
The following section is concerned with the Cauchy problem, stationary solutions and the long time behaviour. Its conclusion
is the proof of Proposition \ref{liouville}. The third section is devoted to the proof of Proposition \ref{spreading_speed}, and we prove Theorem \ref{spreadingthm}
in the fourth.
Our results and methods in these two sections shed a new light on those of ~\cite{BRR1} and ~\cite{BRR2}. It is striking
to find the same condition on $D$ and $d$ for the enhancement of the spreading in one direction.  The stationary solutions are
nontrivial and more complicated to bring out. The computation of the spreading speed $c^*$ comes from a nonlinear spectral problem, and 
not from an algebraic system which could be solved explicitly. It also involves some tricky arguments of differential equations.

In the fifth section, we investigate the semi-limit model (\ref{RPSL}). This underlines the robustness of the method for this kind of system.

 We study in the sixth section the asymptotics $D\to+\infty$
in all cases, which has already been done for the initial model in \cite{BRR1}. Such an asymptotics 
for a nonlinearity has also been studied by L. Dietrich in \cite{Dietrich2}.

We prove Proposition \ref{vitessemaxRPSL2} in the seventh section. We show that in the semi-limit case (\ref{RPSL2}), 
the spreading speed is maximal for a concentrate exchange term, that is for the initial limit system (\ref{BRReq2}).
Such a result may be linked to the case of a periodic framework found in \cite{matano}.

It could be expected a similar result in the other semi-limit case (\ref{RPSL}). We prove by two different ways 
that it is not true.
We first investigate the case of a self-similar approximation of a Dirac measure for the nonlocal exchange $\nu.$ 
For these kind of exchange functions, the Dirac measure is a local minimizer for the spreading speed. This is the purpose 
of the eighth section.

Considering that, a natural guess would be that in the semi-limit case (\ref{RPSL}) the Dirac measure is 
a local minimizer anyway. Once again, this is not true. This is the purpose of the last section: 
we prove that any behaviour 
may happen in a neighbourhood of concentrate exchange term.
More precisely, we prove in Theorem \ref{thmdenonacceleration} that if $c^*_0$ is the spreading speed
associated to the limit system (\ref{BRReq2}), considering a perturbated exchange function of the form 
$\nu=(1-\e)\d_0+\e\up,$ that is mainly boundary conditions with a small integral contribution, then
\begin{itemize}	
 \item for some ranges of parameters $D,\mub,\fp,$ in the neighbourhood of $\e=0,$ the maximal speed is $c^*_0;$
 \item for other ranges of these parameters and some integral exchange $\up,$ a perturbation as above enhances the spreading
 for $\e$ small enough.
\end{itemize}
Such a difference between self-similar approximations and general approximations of a Dirac measure may be surprising, 
but a phenomenon of the same kind has already been observed by L. Glangetas in \cite{Glangetas_92} in a totally different context.
We can also notice that these results underline how different are the influences of the two exchange functions.

\paragraph{Acknowledgements} The research leading to these results has received 
funding from the European Research Council under the European Union’s 
Seventh Framework Programme (FP/2007-2013) / ERC Grant Agreement n.321186 - ReaDi -Reaction-Diffusion Equations, Propagation and Modelling.
I am grateful to Henri Berestycki and Jean-Michel Roquejoffre for suggesting me the
model and many fruitful conversations. Part of the work was initiated by a relevant question pointed out by Gr\'egoire Nadin, whom
I thank for it.
I also would like to thank the anonymous referees for their helpful comments.

\section{Stationary solutions and long time behaviour}
In this section, we are concerned with the well-posedness of the system (\ref{RPeq}) combined with the initial
condition
\begin{equation}\label{initial}
\begin{cases}
u|_{t=0}=u_0 & \in \R \\
v|_{t=0}=v_0 & \in \R^2.
\end{cases}
\end{equation} 

\subsection{Existence, uniqueness and comparison principle}
The system (\ref{RPeq}) is standard, in the sense that the coupling does not appear in the diffusion nor the reaction term.
Anyway, well-posedness still has to be mentioned.

\begin{prop}\label{Cauchy}
Under the above assumptions on $f$, $\mu$ and $\nu$, the Cauchy problem (\ref{RPeq})-(\ref{initial}) admits 
a unique nonnegative bounded solution.
\end{prop}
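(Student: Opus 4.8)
The plan is to treat (\ref{RPeq})--(\ref{initial}) as a semilinear parabolic system and follow the standard three-step scheme: local existence and uniqueness by a fixed-point argument, nonnegativity by a comparison principle exploiting the cooperative structure, and a global-in-time a priori bound from a constant supersolution, which simultaneously rules out blow-up and yields boundedness. I would first recast the system in mild form. Writing $e^{tD\partial_{xx}}$ and $e^{td\Delta}$ for the heat semigroups on $\R$ and $\R^2$, the solution is sought as a fixed point of
\begin{align*}
u(t) &= e^{tD\partial_{xx}}u_0 + \int_0^t e^{(t-s)D\partial_{xx}}\lp -\mub u(s) + \int\nu(y)v(s,\cdot,y)\,dy\rp ds,\\
v(t) &= e^{td\Delta}v_0 + \int_0^t e^{(t-s)d\Delta}\lp f(v(s)) + \mu(\cdot)u(s) - \nu(\cdot)v(s)\rp ds,
\end{align*}
in the space $C\lp[0,T];BUC(\R)\times BUC(\R^2)\rp$ of bounded uniformly continuous functions. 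The heat semigroups are contractions for the sup norm and preserve bounded uniform continuity; $f$ is locally Lipschitz (being $C^1$ on $[0,1]$ and quadratic outside); and the three coupling terms are globally Lipschitz and bounded. Indeed $\nu\in L^1$ follows from (\ref{nucond}), since $\int_0^{+\infty}\int_x^{+\infty}\nu = \int_0^{+\infty}y\nu(y)dy$ and continuity controls $\nu$ near the origin, giving $\lV\int\nu(y)v\,dy\rV_\infty\leq\nub\lV v\rV_\infty$, while boundedness of $\mu$ and $\nu$ handles the other two. Hence for $T$ small the map is a contraction on a suitable ball, and Banach's theorem yields a unique local mild solution, which parabolic regularity promotes to a classical one.

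Next I would establish nonnegativity. The system is cooperative: the off-diagonal couplings are $+\int\nu(y)v\,dy$ in the road equation and $+\mu(y)u$ in the field equation, both order-preserving since $\mu,\nu\geq0$ (the nonlocal term being a positive operator in $v$), while the remaining term $-\nu(y)v$ is diagonal. A comparison principle therefore holds for (\ref{RPeq}). Since $f(0)=0$, the pair $(0,0)$ is a subsolution, so $(u_0,v_0)\geq(0,0)$ forces $(u,v)\geq(0,0)$ throughout the interval of existence.

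For the a priori bound I would look for a \emph{constant} supersolution $(\bar U,\bar V)$. The road inequality reduces to $\mub\bar U\geq\nub\bar V$, so I take $\bar U=\frac{\nub}{\mub}\bar V$; the field inequality $f(\bar V)+\mu(y)\bar U-\nu(y)\bar V\leq0$ then holds for every $y$ as soon as $-f(\bar V)\geq\lV\mu\rV_\infty\bar U$, since the nonnegative term $\nu(y)\bar V$ may be discarded. Here the quadratic decay of $f$ at infinity is decisive: $-f(\bar V)$ grows like $\bar V^2$ whereas the requirement is only linear in $\bar V$, so the inequality is met for all $\bar V$ large; enlarging $\bar V$ further to dominate $\lV v_0\rV_\infty$ and $\frac{\mub}{\nub}\lV u_0\rV_\infty$ secures the initial ordering. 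By comparison $(u,v)\leq(\bar U,\bar V)$ uniformly in time, which precludes finite-time blow-up and gives the stated boundedness; the local solution thus continues to a global one, unique by local uniqueness together with this bound.

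The one point demanding genuine care — more than a true obstacle, given the coupling enters neither the diffusion nor the reaction in a singular way — is the nonlocal term $\int\nu(y)v\,dy$: it must be verified to be a bounded, order-preserving operator so that both the contraction estimate and the comparison principle apply to it exactly as in the local case. The construction of the uniform supersolution is the other place where structure is used essentially, the superlinear saturation of $f$ being what absorbs the gain $\mu(y)\bar U$ in the field equation at those $y$ where $\nu$ is small.
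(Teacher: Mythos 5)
Your proof is correct, and its skeleton coincides with the paper's: well-posedness by a fixed-point argument for the semilinear system on $X=\Cu(\R)\times\Cu(\R^2)$, together with a comparison principle for the cooperative structure. The difference lies in packaging and in how globality is obtained. The paper invokes Henry's sectorial-operator formalism and asserts that the nonlinearity is globally Lipschitz on $X$, which yields global existence and uniqueness in one stroke; you instead run the explicit Duhamel contraction to get a local solution and then continue it globally via the constant supersolution $\lp\frac{\nub}{\mub}\bar V,\bar V\rp$. Your route is slightly more conservative and arguably more precise: with $f$ extended quadratically negative outside $[0,1]$, the Nemytskii map $v\mapsto f(v)$ is Lipschitz only on bounded subsets of $\Cu(\R^2)$, so the global-Lipschitz claim really becomes available only after a truncation or an a priori bound --- exactly what your supersolution supplies. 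Note also that your supersolution is the same one the paper uses later, in the proof of Proposition \ref{stationary1}, for its $L^\infty$ a priori estimate, and that your cooperative-comparison argument is precisely the content of Proposition \ref{comparaison}, which the paper states and leaves unproved as classical. So every ingredient you use appears somewhere in the paper; you have simply assembled them so that the Cauchy theory does not rest on the global-Lipschitz assertion, which is a sound way to organize the argument.
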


Using the formalism of ~\cite{henry}, it is easy to show that the linear part on (\ref{RPeq}) defines a sectorial operator, and that the non-linear is globally
Lipschitz on $X:=\Cu(\R)\times\Cu(\R^2)$, which gives the existence and uniqueness of the solution of (\ref{RPeq}).

We can also derive the uniqueness of the solution of (\ref{RPeq}) by showing that comparison between subsolutions
and supersolutions is preserved during the evolution. Moreover, the following property will also be the key point in our 
later study of the spreading. Throughout this article, we will  call a subsolution (resp. a supersolution) a couple satisfying 
the system (in the classical sense) with the equal signs replaced by $\leq$ (resp. $\geq$) signs, which is also continuous
up to time 0.
\begin{prop}
 \label{comparaison}
Let $(\su,\sv)$ and $(\us,\vs)$ be respectively a subsolution bounded from above and a supersolution
bounded from below of (\ref{RPeq}) satisfying $\su\leq\us$ and $\sv\leq\vs$ at $t=0$. Then, either $\su<\us$ and
$\sv<\vs$ for all $t>0$, or there exists $T>0$ such that $(\su,\sv)=(\us,\vs),\ \forall t\leq T.$
\end{prop}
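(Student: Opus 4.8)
The plan is to reduce the statement to a strong maximum principle for the \emph{cooperative} linear system satisfied by the differences $w:=\us-\su$ and $z:=\vs-\sv$, and then to run the two standard steps: first weak comparison (nonnegativity of $w,z$ for all $t>0$), then the strong maximum principle to obtain the dichotomy. Subtracting the defining differential inequalities for the supersolution $(\us,\vs)$ and the subsolution $(\su,\sv)$, and using $f\in C^1([0,1])$ (globally Lipschitz after the quadratic extension) to write $f(\vs)-f(\sv)=b(t,x,y)\,z$ with $b$ bounded by the mean value theorem, I obtain
\[
\partial_t w-D\partial_{xx}w+\mub\,w \;\ge\; \int \nu(y)\,z(t,x,y)\,dy,\qquad
\partial_t z-d\Delta z-(b-\nu)\,z \;\ge\; \mu(y)\,w,
\]
with $w(0,\cdot)\ge 0$ and $z(0,\cdot)\ge 0$. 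The key structural feature is cooperativity: the coupling coefficients $\nu\ge 0$ and $\mu\ge 0$ enter with the correct sign, and the nonlocal coupling $\int\nu\,z\,dy$ is order-preserving.

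For the weak comparison I want $w\ge 0$, $z\ge 0$ for all $t>0$. Both are bounded from below, since $\us,\vs$ are bounded below and $\su,\sv$ from above. On the unbounded domain $\R\times\R^2$ I would argue against an auxiliary weight $\phi(x,y)=(1+x^2+y^2)^{1/2}$, which tends to $+\infty$ at spatial infinity and, because $\Delta\phi$ is bounded, is a strict supersolution of both scalar operators once a large multiple of $t$ is added. After multiplying by a decaying exponential $e^{-Kt}$ (with $K$ large enough to absorb the zeroth-order coefficients $\mub$ and $\|b\|_\infty$), I consider $w+\e\phi$ and $z+\e\phi$: were nonnegativity to fail, the weighted minimum would be attained at an interior point, contradicting the differential inequalities through cooperativity, and I then let $\e\to 0$.

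With $w,z\ge 0$ established, the dichotomy follows from the parabolic strong maximum principle (a local statement, so the unboundedness of the domain is harmless here). Suppose $w,z$ are not both strictly positive for all $t>0$, and let $t_0>0$ be the first time a zero occurs. If $z(t_0,x_0,y_0)=0$, then since $z\ge 0$ and $\partial_t z-d\Delta z-(b-\nu)z\ge \mu w\ge 0$, the strong maximum principle forces $z\equiv 0$ on $\R^2\times[0,t_0]$; substituting back gives $\mu(y)\,w(t,x)\le 0$, and since $\mu\ge 0$, $\mu\not\equiv 0$, we get $w\equiv 0$ on $[0,t_0]$. If instead $w(t_0,x_0)=0$, the strong maximum principle applied to $\partial_t w-D\partial_{xx}w+\mub w\ge \int\nu\,z\,dy\ge 0$ gives $w\equiv 0$ on $\R\times[0,t_0]$, whence $\int\nu(y)\,z(t,x,y)\,dy\equiv 0$; here the hypothesis $\nu(0)>0$ together with continuity of $\nu$ forces $z\equiv 0$ in a neighbourhood of the road, and the strong maximum principle then propagates $z\equiv 0$ to all of $\R^2$ for $t\le t_0$. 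In either case $(\su,\sv)=(\us,\vs)$ on $[0,t_0]$, and we take $T=t_0$; otherwise $w,z>0$ for every $t>0$.

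I expect the main obstacle to be the interplay between the \emph{nonlocal} coupling and the unbounded domain. In the weak-comparison step it dictates that the weight may grow only linearly in $y$, so that $\int\nu(y)\,\phi\,dy<\infty$; this is exactly the twice-integrability hypothesis (\ref{nucond}), which by Fubini is the finiteness of the first moment $\int \nu(y)|y|\,dy$. In the strong-maximum-principle step the same term requires converting the identity $\int\nu\,z\,dy=0$ into pointwise vanishing of $z$ near the road, which is precisely where $\nu(0)>0$ and the continuity of $\nu$ are used.
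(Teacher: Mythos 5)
The paper offers no proof to compare against: it states Proposition \ref{comparaison} and dismisses the argument as ``quite classical and omitted here.'' Your proposal is precisely that classical proof --- weak comparison for the cooperative system satisfied by the differences $w=\us-\su$, $z=\vs-\sv$ via a weighted Phragm\'en--Lindel\"of argument, then the parabolic strong maximum principle for the dichotomy --- and it is correct; in particular you correctly isolate the two genuinely model-specific points, namely that the linearly growing weight is admissible against the nonlocal coupling exactly because (\ref{nucond}) is, by Fubini, the finiteness of $\int|y|\nu(y)\,dy$, and that $\nu(0)>0$ together with continuity of $\nu$ upgrades $\int\nu(y) z\,dy\equiv0$ to $z\equiv0$ near the road before the strong maximum principle propagates it to all of $\R^2$. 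One justification needs a small repair: the quadratic extension of $f$ is \emph{not} globally Lipschitz, so the coefficient $b$ is not globally bounded as you claim; however, at any touching point of the weighted functions one has $z<0$, hence $-M\le\vs<\sv\le M$ (where $M$ bounds the subsolution from above and $-M$ the supersolution from below), so both arguments of $f$ lie in a fixed compact interval on which $f'$ is bounded, and that is the only place an upper bound on $b$ is used --- for the strong-maximum-principle step, local boundedness of $b$, which follows from the continuity of $\sv$ and $\vs$, suffices.
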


Once again, the proof is quite classical and omitted here.
This comparison principle extend immediately to generalised sub and supersolutions given by the supremum of subsolutions 
and the infimum of supersolutions. For our spreading result, we will need a more general class of subsolutions, already used 
for several results in this context. 
 See for instance Proposition 3.3 in \cite{BRR1}. 

\subsection{Long time behaviour and stationary solutions}
The main purpose of this section is to prove that any (nonnegative) solution of (\ref{RPeq}) converges locally uniformly to
a unique stationary solution $(U_s,V_s)$, which is bounded, positive, $x$-independent, and solution of the 
stationary system of equations (\ref{stateq}): 
\begin{equation}
\label{stateq}
\begin{cases}
-DU''(x)  & =  -\mub U(x)+\int \nu(y)V(x,y)dy \\
-d\Delta V(x,y)& =  f(V)+\mu(y)U(x)-\nu(y)V(x,y).
\end{cases}
\end{equation}
In the same way as above, we call a subsolution (resp. a supersolution) of (\ref{stateq}) a couple satisfying the 
system (in the classical sense) with the equal signs replaced by $\leq$ (resp. $\geq$). 

\begin{prop}
 \label{stationary1}
Let $(u,v)$ be the solution of (\ref{RPeq}) starting from $(u_0,v_0)\not\equiv(0,0)$. then there exist two positive, bounded, 
x-independent, stationary solutions $(U_1,V_1)$ and $(U_2,V_2)$ such that  
$$
U_1\leq \underset{t\to +\infty}{\liminf}\ u(x,t) \leq \underset{t\to +\infty}{\limsup}\ u(x,t)\leq U_2,
$$
$$
V_1(y)\leq \underset{t\to +\infty}{\liminf}\ v(x,y,t) \leq \underset{t\to +\infty}{\limsup}\ v(x,y,t)\leq V_2(y),
$$
locally uniformly in $(x,y)\in \R^2$.
\end{prop}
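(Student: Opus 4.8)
The plan is to trap the long-time behaviour of $(u,v)$ between two $x$-independent stationary solutions obtained as monotone limits of the flow, exactly in the spirit of KPP theory but for the cooperative system (\ref{RPeq}); throughout I rely on the well-posedness of Proposition \ref{Cauchy} and the comparison principle of Proposition \ref{comparaison}. \textbf{Step 1 (bounds and eventual positivity).} First I would exhibit a constant supersolution $(\us,\vs)=(\tfrac{\nub}{\mub}K,K)$. The road equation becomes $0\geq-\mub\us+\nub\vs=0$, while the field equation requires $f(K)+\mu(y)\us-\nu(y)K\leq0$ for every $y$; since $f$ has been extended to a quadratic tending to $-\infty$ and $\mu$ is bounded, the quantity $f(K)+\tfrac{\nub}{\mub}K\,\mu(y)$ is negative for $K$ large, uniformly in $y$. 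Taking $K$ also above $\|u_0\|_\infty$ and $\|v_0\|_\infty$, comparison gives $0\leq(u,v)\leq(\us,\vs)$, so the solution is global and bounded. Eventual positivity then follows from the strong maximum principle and the cooperative coupling: if $v_0\not\equiv0$ then $v>0$ for $t>0$, whence $\int\nu v>0$ feeds the road and $u>0$; the symmetric chain (via $\mu u$) handles $u_0\not\equiv0$. Thus there is $t_0>0$ with $u(t_0,\cdot)>0$ and $v(t_0,\cdot)>0$.

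\textbf{Step 2 (upper stationary solution).} Starting the $x$-independent flow from the constant supersolution of Step 1, translation invariance in $x$ keeps it $x$-independent, and the supersolution property forces it to be nonincreasing in $t$ (apply Proposition \ref{comparaison} to the solution and its time-translate). Bounded below by $0$, it converges as $t\to\infty$; parabolic estimates upgrade this to convergence in $C^2_{loc}$ to an $x$-independent solution $(U_2,V_2)$ of (\ref{stateq}). Since $(u_0,v_0)$ lies below the constant datum, comparison yields $\limsup_{t\to\infty}(u,v)\leq(U_2,V_2)$ locally uniformly.

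\textbf{Step 3 (lower stationary solution).} This is the delicate part. Linearising (\ref{RPeq}) at $(0,0)$, I would seek, among $x$-independent profiles, a positive subsolution $\eta(\psi,\Phi(y))$ with $\eta$ small. Because $f(s)\geq(\fp-\epsilon)s$ for $s$ near $0$, it suffices that $(\psi,\Phi)$ be a positive principal eigenfunction, with positive eigenvalue, of the cooperative operator $(\psi,\Phi)\mapsto\bigl(-\mub\psi+\int\nu\Phi,\ d\Phi''+(\fp-\nu)\Phi+\mu\psi\bigr)$. Positivity of the principal eigenvalue is the crux: since $\nu$ tends to $0$ at infinity, the potential $\fp-\nu(y)$ approaches $\fp>0$, so a trial function spread over a far-out region makes the field Rayleigh quotient positive, and the nonnegative coupling to the road only raises it. Solving the Dirichlet problem on a large ball $B_R$, the principal eigenvalue $\sigma_R$ increases to this positive limit, so $\sigma_R>0$ for $R$ large and $\eta(\psi_R,\Phi_R)$, extended by $0$, is a positive compactly supported subsolution. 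Its evolution is nondecreasing in $t$ and bounded above by Step 1, hence converges to a positive $x$-independent stationary solution $(U_1,V_1)$.

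\textbf{Step 4 (lower sandwich) and main obstacle.} Finally I would fit the subsolution of Step 3 beneath $(u(t_0,\cdot),v(t_0,\cdot))$: by eventual positivity the latter is bounded below by a positive constant on the (compact) support of $(\psi_R,\Phi_R)$, so for $\eta$ small the ordering holds at $t_0$; comparison together with the monotone convergence of Step 3 then gives $\liminf_{t\to\infty}(u,v)\geq(U_1,V_1)$ locally uniformly. The step I expect to fight hardest with is precisely this lower bound in Step 3--4: establishing that the principal eigenvalue of the spatially non-compact cooperative operator is genuinely positive (so that $0$ is unstable and a nontrivial stationary subsolution exists), and then arguing that the increasing limit obtained from a merely localized subsolution is the $x$-independent profile $(U_1,V_1)$ rather than some $x$-dependent state --- this is where a sliding/translation argument, exploiting positivity of $(u,v)$ at every point and the minimality of the limiting stationary solution, must be invoked with care.
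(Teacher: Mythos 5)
Your overall architecture --- trap $(u,v)$ between the increasing limit of the flow started from a small localized subsolution and the decreasing limit started from a large constant supersolution, then recover $x$-independence by translation --- is exactly the paper's, and your Steps 1--2 coincide with the paper's a priori estimate and its construction of $(U_2,V_2)$. The genuine divergence is Step 3, and it contains the one real gap. As written, your subsolution is caught between two incompatible requirements. If you work with $x$-independent profiles, the road component $\psi$ is a positive constant, and Step 4 collapses: for decaying (e.g. compactly supported) initial data, $u(t_0,\cdot)$ tends to $0$ as $|x|\to\infty$, so it is not bounded below by any positive constant on all of $\R$ and no multiple $\eta(\psi,\Phi)$ fits underneath it. If instead you truncate in $x$ to get compact support, then you need existence of a principal eigenvalue for the coupled, nonlocal road--field operator on a truncated domain (a Krein--Rutman statement for a $1$D/$2$D system with the coupling $\int\nu\Phi$), together with the monotonicity of that eigenvalue under cooperative coupling; this is plausible but is real machinery that you assert rather than establish.

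The paper dissolves both difficulties at a stroke by taking the road component of the subsolution to be \emph{identically zero}: it uses $(0,\underline{V})$ with $\underline{V}(x,y)=\eta\phi_R(x,|y|-M)$, where $\phi_R$ is the scalar Dirichlet eigenfunction of $-\Delta$ on a ball $B_R\subset\R^2$ with eigenvalue less than $\fp/(3d)$, translated to the region $|y|\approx M$ where $\nu\leq\tfrac13\fp$, and $\eta$ is small enough that $f(s)\geq\tfrac23\fp s$ on the relevant range. Then the road inequality is trivial, $0\leq\int\nu(y)\underline{V}\,dy$, the field inequality follows from $\tfrac13\fp+\nu\leq\tfrac23\fp$, no coupled spectral theory is needed, and the subsolution is compactly supported in \emph{both} variables, so it fits under $(u,v)$ at $t=1$ by positivity --- this is the same "mass far from the road" insight you identify, but implemented without any eigenvalue problem for the system. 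Your flagged main obstacle (that the increasing limit might be $x$-dependent) is then handled by the short sliding argument you anticipate, and it needs no minimality property: for small $|h|$, the translate $\tau_h\underline{V}$ lies below both $V_1$ and $v(1,\cdot)$, so by translation invariance the flow from $(0,\tau_h\underline{V})$ converges to $\tau_h(U_1,V_1)$ while remaining, by comparison with the stationary solution $(U_1,V_1)$, below it; hence $\tau_h(U_1,V_1)\leq(U_1,V_1)$ for all small $h$, which forces $x$-independence. So your plan is reparable, but you should replace the coupled eigenvalue problem by the zero-road-component bump (or else supply the missing Krein--Rutman theory and still truncate in $x$).
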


\begin{proof}
The proof is adapted from ~\cite{BRR2}. We first need a $L^\infty$ a priori estimate.
\paragraph{A priori estimate}
Considering the hypothesis on the reaction term $f$, there exists $K>0$ such that
$$
\forall s\geq K,\ f(s)\leq s(\frac{\nub}{\mub}\mu(y)-\nu(y)),\ \forall y\in \R.
$$
Thus, for all constant $V\geq K$, $V(\frac{\nub}{\mub},1)$ is a supersolution of (\ref{RPeq}).

\paragraph{Construction of $(U_1,V_1)$}
Let $R>0$ large enough in such a way that the first eigenvalue of the Laplace operator with Dirichlet boundary condition in
$B_R\subset \R^2$ is less than $\frac{f'(0)}{3d}$, $\phi_R$ the associated eigenfunction. We extend $\phi_R$ to 0 outside $B_R$.
$\phi_R$ is continuous, bounded, and satisfies
$$
-d\Delta\phi_R\leq\frac{1}{3}f'(0)\phi_R \ \textrm{in}\ \R^2.
$$ 
Let us choose $\e>0$ such that if $0<x\leq \e,\ f(x)>\frac{2}{3}f'(0)x$. Then define
$M>R$ such that $\forall y \ /\ |y|>M-R,\ \nu(y)\leq\frac{1}{3}f'(0)$.
Since $(u_0,v_0)\not\equiv(0,0)$ and $(0,0)$ is a solution, the comparison principle
implies that $u,v>0,\ \forall t>0.$
Now, let us define $\eta$ such that $\eta\phi_R(x,|y|-M)<v(x,y,1)$ and $\eta \|\phi_R\|_{\infty}\leq\e.$
Define $\underline{V}(x,y):=\eta\phi_R(x,|y|-M)$, and, up to a smaller $\eta,$ $(0,\underline{V})$ is a subsolution of (\ref{RPeq}) which is strictly below
$(u,v)$ at $t=1$. Let $(u_1,v_1)$ be the solution of (\ref{RPeq}) starting from $(0,\underline{V})$ at $t=1$;
 $(u_1,v_1)$ is strictly increasing in time, bounded by $K(\frac{\nub}{\mub},1)$,
and converges to a positive stationary solution $(U_1,V_1)$, satisfying
$$
U_1\leq \underset{t\to +\infty}{\liminf}\ u  \qquad V_1\leq \underset{t\to +\infty}{\liminf}\ v
$$
locally uniformly in $(x,y)\in\R^2$.

It remains to show that $(U_1,V_1)$ is invariant in $x$. For $h \in \R$, let us denote $\tau_h$ the translation by $h$ in the x-direction: 
$\tau_h w(x,y)=w(x+h,y)$. Since $\underline{V}$ is compactly supported, there exists $\e>0$ such that 
$$
\forall h\in (-\e,\e),\ \tau_h\underline{V}<V_1 \textrm{ and } \tau_h\underline{V}<v \textrm{ at } t=1.
$$
Thus, because of the $x$-invariance of the system (\ref{RPeq}), the solution $(\ut_1,\vt_1)$ of (\ref{RPeq}) 
starting from $(0,\tau_h\underline{V})$ at $t=1$ is equal to the translated $(\tau_h u_1, \tau_h v_1)$.
So, $(\ut_1,\vt_1)$ converges to $(\tau_h U_1,\tau_h V_1)$. But, since $(\ut_1,\vt_1)$ is below $(U_1,V_1)$ at $t=1$
and $(U_1,V_1)$ is a (stationary) solution, from the comparison principle given by Proposition \ref{comparaison} we deduce
$(\ut_1,\vt_1)<(U_1,V_1),\ \forall t >1$, and then
$$
(\tau_h U_1,\tau_h V_1)\leq (U_1,V_1),\ \forall h\in (-\e,\e).
$$
Namely, $(U_1,V_1)$ does not depend on $x$.

\paragraph{Construction of $(U_2,V_2)$}
Let $\overline{V}=\max (\|v_0\|_\infty,K)$ and $\overline{U}=\max (\|u_0\|_\infty,\overline{V}\frac{\nub}{\mub})$. Let $(u_2,v_2)$ be the solution 
of (\ref{RPeq}) with initial datum $(\overline{U},\overline{V})$. From the comparison principle (\ref{comparaison}), 
$(u,v)$ is strictly below $(u_2,v_2)$, for all $t>0,\ (x,y)\in\R^2$. Moreover, since $(\overline{U},\overline{V})$ is a supersolution 
of (\ref{RPeq}) it is clear that $\partial_t u_2,\ \partial_t v_2 \leq 0$ at $t=0$. Still using 
Proposition \ref{comparaison}, it is true for all $t\geq 0$, and $u_2$ and $v_2$ are nonincreasing in $t$, bounded from below by $(U_1,V_1)$.
Thus, $(u_2,v_2)$ converges as $t\to \infty$ to a stationary solution $(U_2,V_2)$ of (\ref{RPeq}) satisfying
$$
\underset{t\to +\infty}{\limsup}\ u(t,x)\leq U_2 \qquad \underset{t\to +\infty}{\limsup}\ v(t,x,y)\leq V_2(y),
$$
locally uniformly in $(x,y)\in\R^2$. From the construction of $(U_2,V_2)$, which is totally independent of the $x$-variable, it
is easy to see that $(U_2,V_2)$ does not depend in $x$.
\end{proof}

\paragraph{Uniqueness of the stationary solution}
The previous proposition provides a theoretical proof of the existence of stationary solutions. It also means that a solution is either
converging to a stationary solution, or will remain between two stationary solutions. In order to obtain a more precise description 
of the long time behaviour, we need the following uniqueness result.

\begin{prop}
\label{stationary2}
 There is a unique positive, bounded, stationary solution of (\ref{RPeq}), denoted $(U_s,V_s)$.
\end{prop}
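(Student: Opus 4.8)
The plan is to deduce uniqueness from the strict concavity of $f$ by an amplitude–sliding argument, after reducing the problem to $x$-independent solutions. First I would argue that it suffices to prove there is a \emph{unique $x$-independent} positive bounded stationary solution, and that it must coincide with the minimal solution $(U_1,V_1)$ of Proposition \ref{stationary1}. Indeed, let $(U,V)$ be any positive bounded stationary solution. On one hand, exactly as in the construction of $(U_1,V_1)$, one slips a small compactly supported subsolution $(0,\eta\phi_R(\cdot,|\cdot|-M))$ below $(U,V)$ and lets the flow increase; since $(U,V)$ is itself a stationary supersolution, Proposition \ref{comparaison} keeps the flow below $(U,V)$, so its monotone limit is an $x$-independent stationary solution $\le(U,V)$. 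On the other hand, choosing constants $\overline U,\overline V$ larger than $\|U\|_\infty,\|V\|_\infty$ and than the a priori threshold $K$, the pair $(\overline U,\overline V)$ is a constant supersolution, the flow issued from it is $x$-independent, nonincreasing in $t$, and stays above the stationary $(U,V)$; its limit is again an $x$-independent stationary solution. Once uniqueness among $x$-independent solutions is established, both limits equal the same object, and passing to the limit in the comparisons sandwiches $(U,V)$ and forces $(U,V)=(U_1,V_1)$.

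It then remains to show that the two extremal $x$-independent solutions coincide, $(U_1,V_1)=(U_2,V_2)$. Here each $U_i$ is a constant with $\mub U_i=\int\nu V_i$, and $V_i=V_i(y)$ solves $-dV_i''=f(V_i)+\mu U_i-\nu V_i$. Since $\mu,\nu\to0$ at infinity, each bounded positive $V_i$ converges as $|y|\to\infty$ to the stable state $1$ (and $U_i\to\nub/\mub$), so that in particular $\inf_y V_1>0$. Using $(U_1,V_1)\le(U_2,V_2)$, I would set
$$\kappa^*=\sup\{\kappa\in(0,1]:\ \kappa(U_2,V_2)\le(U_1,V_1)\},$$
which is well defined in $(0,1]$ because $U_1,V_1$ are bounded below away from $0$ while $(U_2,V_2)$ is bounded. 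If $\kappa^*=1$ we immediately get $(U_2,V_2)\le(U_1,V_1)$ and hence equality; so the whole issue is to rule out $\kappa^*<1$.

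The key mechanism is that strict concavity with $f(0)=0$ yields $f(\kappa^*V_2)>\kappa^*f(V_2)$ wherever $V_2>0$; consequently $(\kappa^*U_2,\kappa^*V_2)$ satisfies the road equation as an \emph{equality} (the road equation being linear) but is a \emph{strict} subsolution of the field equation. Writing $w=U_1-\kappa^*U_2\ge0$ and $z=V_1-\kappa^*V_2\ge0$ and subtracting, in the field I obtain
$$-d\Delta z+(\nu-c(y))z>\mu\,w\ge0,$$
where $c(y)$ comes from the mean value theorem applied to $f$. Because $\kappa^*<1$, at infinity $z\to1-\kappa^*>0$, so the value $0$ attained by $\inf z$ (and $\inf w$) by maximality of $\kappa^*$ cannot be lost at $|y|\to\infty$ and must occur at a finite point. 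If $z$ vanishes at an interior field point, the strict inequality contradicts $\Delta z\ge0$ at a minimum. If instead contact occurs on the road, i.e. $w=0$ somewhere, then the road equation $-Dw''=-\mub w+\int\nu z$ forces $\int\nu(y)z(y)\,dy=0$ there; since $\nu\ge0$, $z\ge0$ and $\nu(0)>0$ with $\nu$ continuous, $z$ must vanish on a neighbourhood of $y=0$, producing an interior field contact and the same contradiction. Hence $\kappa^*=1$, giving $(U_1,V_1)=(U_2,V_2)$, which together with the reduction of the first paragraph proves the uniqueness.

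The hard part will be the control at infinity together with the two-component coupling. To apply the strong maximum principle one must guarantee that the amplitude contact defining $\kappa^*$ is not lost as $|y|\to\infty$ (and, before the reduction, as $|x|\to\infty$), which is precisely why identifying the common limit state $(\nub/\mub,1)$ and the uniform positivity $\inf_y V_1>0$ are needed. The partly nonlocal, two-component structure also requires care, since the field maximum principle must be fed by the sign of the exchange contributions $\mu w\ge0$ and $\int\nu z\ge0$ inherited from the road; the hypothesis $\nu(0)>0$ is exactly the ingredient that transfers a contact point on the road back into the field, closing the argument.
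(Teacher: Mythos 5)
Your overall strategy is sound and genuinely different from the paper's: you first reduce to $x$-independent solutions by sandwiching an arbitrary stationary solution between two monotone-in-time flows (reusing the construction of Proposition \ref{stationary1}), and then run a scaling argument $\kappa^*=\sup\{\kappa:\kappa(U_2,V_2)\le(U_1,V_1)\}$ on the resulting one-dimensional problem, whereas the paper proves uniqueness directly among \emph{all} positive bounded solutions via the scaling factor $T_1$ and a five-case analysis, handling contact points at infinity (in $x$ and in $y$) by translation and elliptic compactness. Your Cases ``interior field contact'' and ``road contact transferred into the field via $\nu(0)>0$'' are correct and mirror the paper's Cases 1--2; the strict-concavity computation $f(\kappa^* V_2)>\kappa^* f(V_2)$ and the linearity of the road equation are used exactly as in the paper.

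However, there is one genuine gap, and it sits precisely at the crux. You assert that ``since $\mu,\nu\to0$ at infinity, each bounded positive $V_i$ converges as $|y|\to\infty$ to the stable state $1$, so that in particular $\inf_y V_1>0$.'' This does not follow from the decay of the exchange terms alone. What decay gives you is that any locally uniform limit $W$ of translates $V_i(\cdot+y_n)$, $|y_n|\to\infty$, solves $-dW''=f(W)$ with $W\ge 0$ bounded; but the nonnegative bounded entire solutions of that ODE are $W\equiv 0$ \emph{and} $W\equiv 1$, and nothing in your argument excludes the zero limit (i.e.\ excludes $\liminf_{|y|\to\infty}V_i=0$). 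Ruling this out is exactly the content of the paper's Lemma \ref{stationary3}: one slides a compactly supported subsolution $\bigl(0,\e\tau_M\phi_R\bigr)$ out to $y$-infinity to get a uniform lower bound far from the road, then uses compactness and the strong maximum principle to get $\inf V>0$ everywhere, and finally the Green-function representation of the road equation to bound $U$ below. Without this lemma (or an equivalent ODE/phase-plane argument, which is also nontrivial), $\kappa^*$ need not be well defined and the contact defining it can escape to $|y|=\infty$, so the strong maximum principle is never triggered; your closing paragraph acknowledges this is ``the hard part'' but treats it as granted rather than proving it. A minor additional slip: for $x$-independent solutions $U_i$ is a constant satisfying $\mub U_i=\int\nu V_i$, so the statement ``$U_i\to\nub/\mub$'' is not meaningful and is in general false as an identity; your argument fortunately only uses that $w=U_1-\kappa^*U_2$ is a nonnegative constant, so this is cosmetic, unlike the missing lower bound.
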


To prove the uniqueness, we first need the following intermediate lemma which is the key to all uniqueness properties in this kind of 
problem. The idea that a bound from below implies uniqueness appeared for the first time in ~\cite{BHR}.

\begin{lem}
\label{stationary3}
Let $(U,V)$ be a positive, bounded stationary solution of (\ref{RPeq}). Then there exists $m>0$ such that 
$$
\forall (x,y)\in \R^2,\ U(x)\geq m,\ V(x,y)\geq m.
$$
\end{lem}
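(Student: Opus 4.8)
The plan is to reduce everything to a uniform lower bound on $V$, since the road component is then controlled by the elliptic equation it satisfies. Indeed, once we know $V\geq m_V>0$ everywhere, the first line of (\ref{stateq}) reads $-DU''+\mub U=\int\nu(y)V(x,y)\,dy\geq m_V\nub$, where $\nub=\int\nu>0$. As $U$ is bounded and $\mub>0$, the constant $m_V\nub/\mub$ is a subsolution of this $x$-invariant linear ODE on $\R$, so the comparison principle for bounded solutions gives $U\geq m_V\nub/\mub=:m_U>0$; taking $m=\min(m_U,m_V)$ finishes the lemma. The whole difficulty is therefore the lower bound $\inf_{\R^2}V>0$, and I would split it into a far-field estimate and a near-road estimate.

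For the far-field bound I would recycle the compactly supported KPP subsolution built in the proof of Proposition \ref{stationary1}: the principal Dirichlet eigenfunction $\phi_R$ on a ball $B_R\subset\R^2$ (with $\phi_R(0)=\|\phi_R\|_\infty$), extended by $0$, satisfies $-d\Delta\phi_R\leq\tfrac13\fp\,\phi_R$, and on the region $\{|y|\geq M\}$ where $\nu\leq\tfrac13\fp$ the pair $(0,\eta\phi_R(\cdot-(x_0,y_0)))$ is a subsolution of (\ref{RPeq}) whenever the center satisfies $|y_0|\geq M+R$ and $\eta\|\phi_R\|_\infty\leq\e$ (using $f(s)\geq\tfrac23\fp\,s$ for $s\leq\e$). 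For any such center I would run the sliding method: set $\eta^*=\sup\{\eta>0:\ \eta\phi_R(\cdot-(x_0,y_0))\leq V\}$, which is finite (as $V$ is bounded) and positive (as $V$ is continuous and positive on the compact support). If $\eta^*\|\phi_R\|_\infty\leq\e$, then $\eta^*\phi_R(\cdot-(x_0,y_0))$ is a genuine subsolution touching the solution $V$ from below; writing $w=V-\eta^*\phi_R(\cdot-(x_0,y_0))\geq0$ and using $\mu U\geq0$ together with a mean-value linearization $f(V)-f(\eta^*\phi_R)=c'(x,y)w$, the function $w$ satisfies $-d\Delta w+(\nu-c'+C)w\geq0$ on all of $\R^2$ for $C$ large, has an interior zero, and hence vanishes identically by the strong maximum principle — contradicting $V>0$ off the support. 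Therefore $\eta^*\|\phi_R\|_\infty>\e$, which at the center yields $V(x_0,y_0)\geq\eta^*\phi_R(0)=\eta^*\|\phi_R\|_\infty>\e$. As $(x_0,y_0)$ with $|y_0|\geq M+R$ is arbitrary, this gives $V\geq\e$ on $\{|y|\geq M+R\}$.

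For the near-road region $\{|y|\leq M+R\}$ I would argue by contradiction with a translation–compactness argument in $x$, where the coefficients are invariant. Suppose $V(x_n,y_n)\to0$ with $|y_n|\leq M+R$; translating by $x_n$ and applying interior elliptic (Schauder) estimates to the bounded functions $U(\cdot+x_n)$, $V(\cdot+x_n,\cdot)$, I extract a $C^2_{loc}$ limit $(U_\infty,V_\infty)\geq0$ solving (\ref{stateq}), with $V_\infty(0,y_*)=0$ for some $|y_*|\leq M+R$. Near this interior minimum $V_\infty$ is small, so $f(V_\infty)\geq0$ and $\mu U_\infty\geq0$ give $-d\Delta V_\infty+\nu V_\infty\geq0$; the strong maximum principle forces $V_\infty\equiv0$. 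But the far-field bound passes to the limit, so $V_\infty\geq\e$ on $\{|y|\geq M+R\}$ — a contradiction. Hence $\inf_{|y|\leq M+R}V>0$ as well, and combined with the far-field estimate we obtain $m_V:=\inf_{\R^2}V>0$, completing the reduction.

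I expect the main obstacle to be the far-field sliding step, specifically making the strong-maximum-principle contact argument fully rigorous on the unbounded domain (checking that the differential inequality for $w$ genuinely holds across the boundary of $\mathrm{supp}\,\phi_R$, and that $\eta^*$ is well defined and the touching point is interior). A secondary technical point, easily dispatched, is that $f(V)$ may be negative where $V>1$; this never occurs near the infima of $V$, so it does not affect any of the maximum-principle steps above. One could equivalently replace the elliptic contact argument by the parabolic flow started from $(0,\eta^*\phi_R)$ and invoke Proposition \ref{comparaison}, but the stationary strong maximum principle is cleaner to state.
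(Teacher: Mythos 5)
Your proof is correct and follows essentially the same strategy as the paper's: a far-field lower bound on $V$ obtained by sliding the compactly supported Dirichlet-eigenfunction subsolution $(0,\eta\phi_R)$ until contact, a translation--compactness argument combined with the strong maximum principle to handle the strip near the road, and the elliptic equation for $U$ to transfer the bound to the road. The only differences are cosmetic: you slide in amplitude and apply the elliptic strong maximum principle at the (necessarily interior) contact point, where the paper slides the bump in position and invokes the parabolic comparison principle of Proposition \ref{comparaison}, and you bound $U$ below by a constant subsolution where the paper convolves with the fundamental solution of $-U''+\frac{\mub}{D}U$.
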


\begin{proof}
Let $(U,V)$ be such a stationary solution.

\textit{First step}: there exists $M>0$ such that 
$$m_1:=\inf \{V(x,y),\ |y|>M\}>0.$$
We will state the proof for positie $y.$
Let $R>0$ large enough in such a way that the first eigenvalue of the Laplace operator with Dirichlet boundary condition in
$B_R\subset \R^2$ is less than $\frac{f'(0)}{3d}$, $\phi_R$ the associated eigenfunction. We extend $\phi_R$ to 0 outside $B_R$.
$\phi_R$ is continuous, bounded in $\R^2$, positive in $B_R$. For $M>0,$ we define $\tau_M\phi_R(x,y)=\phi_R(x,y-M).$ 
As above, let us define $M_0>R$ such that $\forall y \ /\ |y|>M_0-R,\ \nu(y)\leq\frac{1}{3}f'(0).$ Then, there exists $\e>0$ 
such that $\forall M>M_0,$ $\lp 0,\e\tau_M\phi_R\rp$ is a subsolution of (\ref{stateq}). As $V$ is positive, up to smaller $\e,$
we can suppose that $\e\tau_{M_0}\phi_R<V.$ Now, we claim that 
$$
\forall y>M_0,\ V(0,y)>\e\phi_R(0,0).
$$
Indeed, let us define 
$$
M_1:=\sup \{M\geq M_0,\ \forall K\in [M_0,M],\ \e\tau_K\phi_R<V\}.
$$
Since $V$ and $\phi_R$ are continuous, $M_1>M_0.$ Suppose that $M_1<+\infty.$ Then $\lp U,V\rp\geq \lp0,\e\tau_{M_1}\phi_R\rp$ and there exists $(x_0,y_0),$ 
$V(x_0,y_0)=\e\tau_{M_1}\phi_R(x_0,y_0).$ Considering that the dynamical system starting from $(0,\e\tau_{M_1}\phi_R),$ which is a subsolution, 
we get a contradiction from Proposition \ref{comparaison}.
Hence $M_1=+\infty$ and our claim is proved. Using the same argument in the $x$-direction, we get that $m_1\geq \e\phi_R(0,0).$

 
 \textit{Second step}: 
 $$
 m_2:=\inf \{V(x,y),\ (x,y)\in\R^2\}>0.
 $$
 If $m_2=m_1$, the assumption is proved. It is obvious that $m_2\geq0.$ Let us assume by way of contradiction that $m_2=0.$ We consider 
 $(x_n,y_n)$ such that $V(x_n,y_n)\to 0$ with $n\to \infty$. Now, we set 
 $$
 U_n:=U(.+x_n),\ V_n:=V(.+x_n,.+y_n),\ \mu_n:=\mu(.+y_n),\ \nu_n:=\nu(.+y_n).
 $$
 Using the fact that $U$ and $V$ are smooth and bounded, by standard elliptic estimates (see ~\cite{GT} for example), there exists
 $\vp:\N\to\N$ strictly increasing such that $(U_{\vp(n)})_n,\ (V_{\vp(n)})_n$ converge locally uniformly to 
 some functions $\tilde{U},\ \tilde{V}$ satisfying 
 $$
 \begin{cases}
-D\tilde{U}''(x)  & =  -\mub \tilde{U}(x)+\int \tilde{\nu}(y)\tilde{V}(x,y)dy \\
-d\Delta \tilde{V}(x,y) & =  f(\tilde{V})+\tilde{\mu}(y)\tilde{U}(x)-\tilde{\nu}(y)\tilde{V}(x,y)
\end{cases}
 $$
where $\tilde{\mu},\ \tilde{\nu}$ are some translated of $\mu,\ \nu$. Furthermore,  $\tilde{V}\geq 0$ and $\tilde{V}(0,0)=0$. Thus 
 in a neighbourhood of $(0,0)$ we have 
$$
-d\Delta \tilde{V}(x,y)+\tilde{\nu}(y)\tilde{V}(x,y)\geq 0,\ \min(\tilde{V})=0. 
$$
From the strong elliptic maximum principle, we deduce $\tilde{V}\equiv 0.$ But by step 1 $\tilde{V}(.,2M)\geq m_1>0$, and we get a contradiction.
Hence the result stated above, $m_2:=\inf(V)>0.$ 

Third step: $U$ is also bounded from below by a positive constant. Indeed, if we set $\phi(x)=\frac{1}{D}\int \nu(y)V(x,y)dy$, $U$ is 
solution of 
\begin{equation}
\label{eqU}
-U''+\frac{\mub}{D}U=\phi,
\end{equation}
with $\phi$ continuous and $\phi\geq m_2\|\nu\|_{L^1}$. Using $\Phi(x)=\frac{D}{2\mub}\exp(-\sqrt{\frac{\mub}{D}}|x|)$
which is the fundamental solution of (\ref{eqU}) we get 
$$
U(x)=\phi*\Phi(x)\geq \|\Phi\|_{L^1}.m_2.\|\nu\|_{L^1}:=m_3>0.
$$
Now, set $m=\inf(m_1,m_2,m_3)$ and the proof is concluded.
\end{proof}

\paragraph{Proof of proposition \ref{stationary2}}
It remains now to prove the uniqueness of the stationary solution of (\ref{RPeq}). The difficulties come from the fact that it is a 
coupled system in an unbounded domain: for bounded domains, uniqueness was proved in ~\cite{Berestycki1}. Let $(U_1,V_1)$, $(U_2,V_2)$ be two 
bounded, positive solutions of (\ref{stateq}), and let us show that $(U_1,V_1)=(U_2,V_2).$ From Lemma \ref{stationary3}, there exists 
$m>0$ such that $(U_i,V_i)\geq m,\ i=1..2.$ Hence, for $T$ large enough, $T(U_1,V_1)>(U_2,V_2).$ Let 
$$
T_1=\inf\{T,\ \forall T'>T,\ T'(U_1,V_1)>(U_2,V_2)\}>0,
$$
and
$$
(\d U,\d V)=T_1(U_1,V_1)-(U_2,V_2).
$$
Up to take $T_1(U_2,V_2)-(U_1,V_1)$ if needed, we can suppose $T_1\geq1.$ The couple $(\d U,\d V)$ satisfies the following system: 
\begin{equation*}
\begin{cases}
-D\d U''(x)  & =  -\mub \d U(x)+\int \nu(y)\d V(x,y)dy \\
-d\Delta \d V(x,y)& =  T_1 f(V_1)-f(V_2)+\mu(y)\d U(x)-\nu(y)\d V(x,y)
\end{cases}
\end{equation*}
and $\inf(\d U)=0\ \underline{\textrm{or}} \ \inf(\d V)=0.$
In order to show that $(\d U,\d V)\equiv 0$ we have to distinguish five cases.

Case 1: there exists $(x_0,y_0)\in\R^2,\ \d V(x_0,y_0)=0.$ Then, using the fact that
$f(0)=0$ and that $f$ is strictly concave, we can easily check that $T_1f(V_1)-f(V_2)\geq 0$ in a 
neighbourhood of $(x_0,y_0).$ Thus, because $\d U\geq 0$, $\d V$ is solution of the
inequality system
$$
\begin{cases}
-d\Delta \d V +\nu \d V & \geq 0 \\
\d V \geq 0, & \d V(x_0,y_0)=0.
\end{cases}
$$
From the elliptic maximum principle, we infer $\d V\equiv 0$. Because $\mu\not\equiv 0,$ we immediately get $\d U\equiv 0.$
So $(U_2,V_2)=T_1(U_1,V_1)$ ; subtracting the two systems (\ref{stateq}) in $(U_1,V_1)$ and $T_1(U_1,V_1)$ yields 
$T_1 f(V_1)=f(V_1)$ and $V_1>0$. So $T_1=1$, and $(U_2,V_2)=(U_1,V_1)$.

Case 2: there exists $x_0$ such that $\d U(x_0)=0.$ In the same way we infer $\d U\equiv 0$. Then, $\forall x\in\R,\ \int\nu\d V=0.$ In particular, 
there exists $y_0$ such that $\d V(x_0,y_0)=0$, and the problem is reduced to the (solved) first case: $T_1=1$, and $(U_2,V_2)=(U_1,V_1)$.

Case 3: there is a contact point for $U$ at infinite distance. Formally, there exists $(x_n)_n,\ |x_n|\to\infty$ such that
$\d U(x_n)\to 0$ with $n\to\infty.$ We set
$$
U_i^n:=U_i(.+x_n),\ V_i^n:=V_i(.+x_n,.),\ i=1,2.
$$
In the same way as above, there exist $\tilde{U}_i,\ \tilde{V}_i$ such that, up to a subsequence, $(U_i^n,V_i^n)$ converges locally uniformly
to $(\tilde{U}_i,\ \tilde{V}_i)$, and the couples
$(\tilde{U}_1,\tilde{V}_1)$ and $(\tilde{U}_2,\tilde{V}_2)$ both satisfy (\ref{stateq}) and 
$$\begin{cases}
T_1=\inf\{T,\ \forall T'>T,\ T'(\tilde{U}_1,\tilde{V}_1)>(\tilde{U}_2,\tilde{V}_2)\}, \\
(T_1 \tilde{U}_1-\tilde{U}_2)(0)=0.
\end{cases}
$$
The problem is once again reduced to the first case, and $T_1=1.$

Case 4: there is a contact point for $V$ at infinite distance in $x$, finite distance in $y$, say $y_0$. We use the same trick as above, 
the limit problem is this time reduced to the second case, and we still get $T_1=1.$

Case 5: there is a contact point for $V$ at infinite distance in $y$. That is to say there exist $(x_n)_n,\ (y_n)_n$, with
$|y_n|\to \infty$ such that $\d V (x_n,y_n)\underset{n\to\infty}{\longrightarrow}0.$ Once again, we set
$$
V_i^n:=V_i(.+x_n,.+y_n),\ i=1,2.
$$
Now, considering that $U_1,\ U_2$ are bounded and that $\mu,\nu\underset{|y|\to\infty}{\longrightarrow}0$, $(V_1^n)_n,\ (V_2^n)_n$ converge
locally uniformly to some functions $\tilde{V}_1,\ \tilde{V}_2$ which satisfy
$$
\begin{cases}
 -d\Delta \tilde{V}_i & = f(\tilde{V}_i) \\
 (T_1 \tilde{V}_1-\tilde{V}_2)(0,0) & = 0
\end{cases}
$$
and $(T_1 \tilde{V}_1-\tilde{V}_2) \geq 0$ in a neighbourhood of $(0,0)$. Thus, using the concavity of $f$ as in the first case, we get $T_1=1.$

From the five cases considered above, whatever may happen, $T_1 = 1,$ 
and the proof is complete.
 \qed

The proof of Proposition \ref{liouville} is now a consequence of Propositions \ref{stationary1} and \ref{stationary2}.

\section{Exponential solutions of the linearised system}

Looking for supersolution of the system (\ref{RPeq}) lead us to search
positive solutions of the linearised system (\ref{RPli}), hence we are looking for solutions of the form: 
\begin{equation}
\label{solexp} 
\begin{pmatrix}
 u(x,t) \\
v(x,y,t)
\end{pmatrix}
= e^{-\la(x-ct)} \begin{pmatrix}
                  1 \\ \phi(y)
                 \end{pmatrix},
\end{equation}
where $\la, c$ are positive constants, and $\phi$ is a nonnegative function in $H^1(\R)$.
The system on $(\la, \phi)$ reads: 
\begin{equation}
\label{eq1}
\begin{cases}
 -D\la^2+\la c+\mub  = \int \nu(y)\phi(y)dy \\
-d\phi''(y)+(\la c-d\la^2-f'(0)+\nu(y))\phi(y)  =  \mu(y).
\end{cases}
\end{equation}

The first equation of (\ref{eq1}) gives the graph of a function $\la\mapsto\Psi_1(\la,c):= -D\la^2+\la c+\mub$, which,
if (\ref{solexp}) is a solution of (\ref{RPli}), is equal to $\int \nu(y)\phi(y)dy$. \\
The second equation of (\ref{eq1})
gives, under some assumptions on $\la$, a unique solution $\phi=\phi(y;\la,c)$ in $H^1(\R)$. To this unique solution
we associate the function $\Psi_2(\la,c):=\int \nu(y)\phi(y)dy$.
Let us denote $\Gamma_1$ the graph of $\Psi_1$ in the $(\la, \Psi_1(\la))$ plane, and $\Gamma_2$ the graph
of $\Psi_2$. So, (\ref{eq1}) amounts to the investigation of $\la,\ c>0$ such that $\Gamma_1$ and $\Gamma_2$ intersect.


The graph of $\la \mapsto \Psi_1(\la)$ is a parabola. As we are looking for a nonnegative function $\phi$, we are interested
in the positive part of the graph. The function $\la\mapsto\Psi_1(\la)$ is nonnegative for $\la \in [\la_1^-(c) , \la_1^+(c)]$, 
with $\la_1^\mp(c) = \frac{c\mp\sqrt{c^2+4D\mub}}{2D}$.
\\
It reaches its maximum value in $\la=\frac{c}{2D}$, with $\Psi_1(\frac{c}{2D})=\mub+\frac{c^2}{4D} > \mub$.\\
We also have 
\begin{equation}
 \Psi_1(0)=\Psi_1(\frac{C}{D})=\mub,
\end{equation}
which will be quite important later.
\\
We may observe that: with $D$ fixed, $(\la_1^-(c) , \la_1^+(c)) \underset{c\to +\infty}{\longrightarrow} (0^-, +\infty)$;
$\la \mapsto \Psi_1(\la)$ is strictly concave;
$\displaystyle\frac{d\Psi_1}{d\la}_{|\la=c/D}=-c$.We can summarize it in fig. (\ref{parabole}).

\begin{figure}[ht]
   \centering
   \def\mub{\overline{\mu}}
\psset{unit=0.7}
\psset{linewidth=0.03}

\begin{pspicture}(-3,-1)(9,8)
\psparabola[linewidth=0.1,linecolor=red](-2,0)(3,6.25)
\psline{->}(-3,0)(9,0)
\psline{->}(0,-1)(0,7.5)
\psline[linestyle=dashed](0,4)(6,4)
\psline[linestyle=dashed](3,6.25)(3,0)
\psline[linestyle=dashed](0,6.25)(3,6.25)
\psline[linestyle=dashed](6,4)(6,0)
\uput{0.2}[180](0,6.25){$\mub+\frac{c^2}{4D}$}
\uput{0.2}[180](0,4){$\mub$}
\uput{0.2}[260](3,0){$\frac{c}{2D}$}
\uput{0.2}[260](6,0){$\frac{c}{D}$}
\uput{0.2}[260](8,0){$ \la_1^+$}
\uput{0.2}[225](0,0){0}
\uput{0.2}[0](9,0){$\lambda$}
\uput{0.2}[0](0,7.5){$\Psi_1(\lambda)$}
\end{pspicture}
   \caption{\label{parabole}representation of $\Gamma_1$}
\end{figure}

\subsection{Study of $\Psi_2$}
The study of $\Psi_2$ relies on the investigation of the solution $\phi=\phi(\la;c)$ of
\begin{equation}
\label{eqgeneralesurphi}
\begin{cases}
 -d\phi''(y)+(\la c-d\la^2-f'(0)+\nu(y))\phi(y)  =  \mu(y) \\
 \phi \in H^1(\R) \ \phi\geq0. 
\end{cases}
\end{equation}
Since $\mu$ is continuous and decays no slower than an exponential, $\mu$ belongs to $L^2(\R)$. Since $\nu$ is nonnegative and 
bounded, the Lax-Milgram theorem assures us that (\ref{eqgeneralesurphi}) admits a unique solution if
$\la c-d\la^2-f'(0) >0$, that is to say if $\la$ belongs to $]\la_2^-(c),\la_2^+(c)[$, where
$$
\la_2^\mp(c) = \frac{c\mp\sqrt{c^2-c_{KPP}^2}}{2d},
$$
with
$$
c_{KPP}=2\sqrt{df'(0)}.
$$
As in ~\cite{BRR1}, the KPP-asymptotic spreading speed will have a certain importance in the study of the spreading
in our model. Moreover, since $\nu, \mu$ tend to $0$ with $|y|\to\infty$, an easy computation will show that, for
$\la<\la_2^-$ or $\la>\la_2^+$, equation (\ref{eqgeneralesurphi}) cannot have a constant sign solution. Morever, we look for $H^1$ solutions. We will see in Lemma \ref{lemhomo}
that it prevents the existence of a solution for $c=c_{KPP}.$
Thus,
\begin{equation}
\label{existencegamma2}
 \Gamma_2 \textrm{ exists if and only if } c>c_{KPP}.
\end{equation}

 The main properties of $\Psi_2$ are the following: 

\begin{prop}\label{sourire}
 If $c>c_{KPP}$, then: 
\begin{enumerate}
 \item $\la \mapsto \Psi_2(\la)$ defined on $]\la_2^-,\la_2^+[$ is positive, smooth, strictly convex and
symmetric with respect to the line $\{\la=\frac{c}{2d}\}$. With $\la$ fixed we also have $\frac{d}{dc}\Psi_2(\la;c)<0$.
 \item $\Psi_2(\la) \underset{\la\to \la_2^\mp}{\longrightarrow} \mub$.
 \item $\frac{d\Psi_2}{d\la}(\la)\underset{\underset{\la>\la_2^-}{\la\to \la_2^-}}{\longrightarrow} -\infty$.
\end{enumerate}
\end{prop}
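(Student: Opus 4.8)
The plan is to exploit a structural reduction: in the ODE defining $\phi$, the parameters $(\la,c)$ enter only through the single combination $\k := \la c - d\la^2 - \fp$. Writing the solution of
$$
-d\phi'' + (\k + \nu(y))\phi = \mu(y), \qquad \phi \in H^1(\R),
$$
as $\phi(\cdot;\k)$, the quantity $\Psi_2(\la;c) = \int \nu\phi$ is in fact a function $\Psit(\k)$ of $\k$ alone, defined for $\k>0$ (the Lax--Milgram range $\la \in \,]\la_2^-,\la_2^+[\,$). Since $\k$ is a downward parabola in $\la$ symmetric about $\la = \frac{c}{2d}$, the symmetry of $\Psi_2$ about $\{\la = c/(2d)\}$ is immediate. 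Positivity follows from the maximum principle: as $\mu \geq 0$, $\mu\not\equiv0$ and the potential $\k+\nu>0$, the solution $\phi$ is strictly positive, whence $\Psi_2 = \int\nu\phi>0$ because $\nu\geq0$, $\nu\not\equiv0$.

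Smoothness, monotonicity and convexity all come from differentiating the ODE in $\k$. Setting $\psi := \partial_\k\phi$ and $\chi := \partial_\k^2\phi$, one finds
$$
-d\psi'' + (\k+\nu)\psi = -\phi, \qquad -d\chi'' + (\k+\nu)\chi = -2\psi.
$$
Since $\phi>0$, the maximum principle gives $\psi<0$, and then $\chi>0$. Hence $\Psit'(\k) = \int\nu\psi<0$ and $\Psit''(\k) = \int\nu\chi>0$, so $\Psit$ is smooth, strictly decreasing and strictly convex on $]0,+\infty[$. The claim $\frac{d}{dc}\Psi_2<0$ follows from $\frac{d}{dc}\Psi_2 = \la\,\Psit'(\k)<0$, since $\la>0$; and differentiating $\Psi_2 = \Psit(\k(\la,c))$ twice in $\la$ gives
$$
\frac{d^2\Psi_2}{d\la^2} = \Psit''(\k)\,(c-2d\la)^2 - 2d\,\Psit'(\k),
$$
where both terms are nonnegative and the second is strictly positive. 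This settles part 1.

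For the boundary behaviour I would first record the integral identity obtained by integrating the ODE over $\R$ (the $H^1$-decay kills the boundary terms):
$$
\k\int_\R \phi(y;\k)\,dy + \Psit(\k) = \int_\R\mu = \mub, \quad\text{i.e.}\quad \Psit(\k) = \mub - \k\int_\R\phi(\cdot;\k).
$$
Parts 2 and 3 thus reduce to the asymptotics of $\int\phi$ as $\k\to0^+$. Since $\partial_\k\phi<0$, $\phi$ increases as $\k\downarrow0$; bounding it uniformly by a bounded supersolution of the limiting equation $-d\phi_0'' + \nu\phi_0 = \mu$ (this is where $\nu(0)>0$ and the fast decay of $\mu$ enter), monotone convergence yields $\phi(\cdot;\k)\uparrow\phi_0$, with $\phi_0$ bounded and tending to positive constants at $\pm\infty$. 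Dominated convergence ($\nu$ integrable) then gives $\Psit(\k)\to\int\nu\phi_0 = \mub$, which is part 2. The same picture shows $\phi$ is close to a positive constant over an interval of length $\sim\k^{-1/2}$ before the exponential cutoff $e^{-\sqrt{\k/d}\,|y|}$ sets in, so $\int\phi\sim C\k^{-1/2}\to+\infty$; feeding this into $\Psit'(\k)=\int\nu\psi$ gives $\Psit'(\k)\to-\infty$, and since $c-2d\la\to\sqrt{c^2-c_{KPP}^2}>0$ as $\la\to\la_2^-$, the chain rule yields $\frac{d\Psi_2}{d\la} = \Psit'(\k)(c-2d\la)\to-\infty$, which is part 3.

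I expect this last boundary analysis to be the main obstacle: parts 1 are pure maximum-principle and differentiation arguments, whereas parts 2 and 3 require controlling the degeneration of the operator $-d\partial_{yy} + \k + \nu$, whose coercivity is lost as $\k\to0^+$ because $\nu$ vanishes at infinity. Both the existence of a bounded limit $\phi_0$ and the precise rate $\int\phi\sim C\k^{-1/2}$ hinge on a careful study of the homogeneous equation (presumably Lemma \ref{lemhomo}) together with the hypotheses $\nu(0)>0$ and the twice-integrability \eqref{nucond} of $\nu$, which ensure the limiting profile has positive finite limits at $\pm\infty$ rather than degenerating.
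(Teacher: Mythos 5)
Your part 1 is correct and is, in substance, the paper's own argument: the reduction to the single parameter $\k=\la c-d\la^2-\fp$ is a clean repackaging of the paper's computations with $\pl$, $\pll$ and $\phi_c$, and the symmetry/positivity/convexity conclusions coincide. Your part 2 is also sound and close to the paper's: the paper proves $\int\nu\phi\to\mub$ by showing $\k\int\phi=O(\sqrt{\k})$ with an explicit boxcar supersolution, while you pass to the monotone limit $\phi_0$ of $\phi(\cdot;\k)$ and integrate the limit equation; both arguments rest on the same barrier built from $\nu(0)>0$ and the exponential decay of $\mu$ (and your step $\int\nu\phi_0=\mub$ needs the small justification that $\phi_0'\to0$ at $\pm\infty$, which follows from integrability of $\nu\phi_0-\mu$).

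Part 3, however, has a genuine gap. From $\Psit(\k)=\mub-\k\int\phi$ and the claimed asymptotic $\int\phi\sim C\k^{-1/2}$ you cannot conclude that $\Psit'(\k)=\int\nu\,\partial_\k\phi\to-\infty$: an asymptotic expansion of a function cannot be differentiated. Concretely, differentiating the identity gives
\begin{equation*}
\Psit'(\k)=-\int_\R\phi(\cdot;\k)-\k\int_\R\partial_\k\phi(\cdot;\k),
\end{equation*}
in which the first term is negative and divergent but the second is \emph{positive} (since $\partial_\k\phi<0$), so the divergence of $\int\phi$ by itself decides nothing about $\Psit'$. Moreover, only the upper bound $\int\phi=O(\k^{-1/2})$ comes for free from the supersolution; the matching lower bound $\int\phi\geq c_0\k^{-1/2}$ requires controlling from below the decaying solution of $-dw''+(\k+\nu)w=0$ uniformly as $\k\to0$ (a $\k$-dependent version of Lemma \ref{lemhomo}); for $\nu$ merely tending to $0$ at infinity, the naive exponential subsolution $e^{-\sqrt{(\k+\sup_{|y|>A}\nu)/d}\,|y|}$ only yields an $O(1)$ bound on $\int\phi$, not $\k^{-1/2}$.

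There are two ways to close the gap. (i) Keep your strategy but invoke the convexity of $\Psit$ proved in part 1: for $m>1$, convexity gives $\Psit'(\k)\leq\lc\Psit(m\k)-\Psit(\k)\rc/\lc(m-1)\k\rc=\lc\int\phi(\cdot;\k)-m\int\phi(\cdot;m\k)\rc/(m-1)$, and with two-sided bounds $c_0\k^{-1/2}\leq\int\phi\leq C_1\k^{-1/2}$ and $m>(C_1/c_0)^2$ this is bounded above by $\frac{C_1-\sqrt{m}\,c_0}{m-1}\k^{-1/2}\to-\infty$; this is rigorous but still requires proving the lower bound. (ii) Do what the paper does, which avoids the asymptotics of $\int\phi$ altogether: it shows directly that $\vp:=-\partial_\k\phi\to+\infty$ locally uniformly in $y$, by contradiction — if $\vp$ stayed bounded near $y=0$ along a sequence $\k\to0$, Harnack's inequality plus monotonicity in $\k$ would produce a nonnegative solution of $-w''+\nu w=k$ on $\R$ (where $k>0$ is the uniform lower bound on $\phi$ from Corollary \ref{minoration}), but the fundamental system of Lemma \ref{lemhomo} shows that every such solution behaves like $-\gamma y^2$ at infinity, contradicting nonnegativity. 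Then $\int\nu\vp\to+\infty$ follows because $\nu$ is continuous with $\nu(0)>0$, and the chain rule step you wrote (with $c-2d\la\to\sqrt{c^2-c_{KPP}^2}>0$) finishes part 3 exactly as in the paper.
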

The graph $\Gamma_2$ looks like fig. (\ref{Sourire}).
\begin{figure}[ht]
   \centering
   \def\mub{\overline{\mu}}

\begin{pspicture}(-2,-1)(8,5)

\psellipticarc[linewidth=0.1,linecolor=red](4,4)(3,2){180}{360}
\psline{->}(-2,0)(8,0)
\psline{->}(0,-1)(0,5)
\psline[linestyle=dashed](0,4)(7,4)
\uput{0.2}[180](0,4){$\mub$}
\psline[linestyle=dashed](1,0)(1,4)
\uput{0.2}[270](1,0){$\lambda_2^-$}
\psline[linestyle=dashed](7,0)(7,4)
\uput{0.2}[270](7,0){$\lambda_2^+$}
\psline[linestyle=dashed](4,0)(4,2)
\uput{0.2}[270](4,0){$\frac{c}{2d}$}
\uput{0.2}[0](8,0){$\lambda$}
\uput{0.2}[0](0,5){$\Psi_2(\lambda)$}
\end{pspicture}
   \caption{\label{Sourire}representation of $\Gamma_2$}
\end{figure}

{\textbf{Proof of the first part of proposition (\ref{sourire})}}

\paragraph{Positivity, smoothness}
For all $\la$ in $]\la_2^-,\la_2^+[$, 
\begin{equation}
P(\la):=\la c-d\la^2-f'(0) >0.
\end{equation}
Consequently, $\forall \la \in ]\la_2^-,\la_2^+[, \ \forall y\in \R,\ P(\la)+\nu(y)>0$. From the elliptic maximum principle, as
$\mu$ is nonnegative,
we deduce that $\phi(y)>0,\ \forall y \in \R$. Hence, since $\nu$ is nonnegative, we have $\Psi_2(\la)=  \int \phi(y;\la)\nu(y)dy >0$,
and $\Psi_2$ is positive.
\\
Considering that $\la\mapsto P(\la)$ is polynomial, with the analytic implicit function theorem,
we see immediately 
that $\la\mapsto \phi(y;\la)$ is analtytic (see \cite{Cartan}, Theorem 3.7.1). Since $\nu$ is integrable, $\la\mapsto\Psi_2(\la)$
is also analytic.
\\
From the symmetry of $\la\mapsto P(\la)$ and the uniqueness of the solution, we deduce the symmetry of $\Gamma_2$ with respect 
to the line $\{\la=\frac{c}{2d}\}$.

\paragraph{Monotonicity, convexity}
Denote by $\pl$ the derivative of $\phi$ with respect to $\la$. Then, if we differentiate (\ref{eqgeneralesurphi}) with respect to $\la$, we can see that 
$\pl$ satisfies: 
\begin{equation}
 \label{eqpl}
-d\pl''(y)+(P(\la)+\nu(y))\pl(y)  =  (2d\la-c)\phi(y).
\end{equation}
In the same way as equation (\ref{eqgeneralesurphi}), (\ref{eqpl}) has a unique solution in $H^1(\R)$ for all $\la \in ]\la_2^-,\la_2^+[$.
Since $\phi$ is positive, $\pl$ is of constant sign, with the sign of $(2d\la-c)$.
Hence we have that $\Psi_2$ is decreasing on $]\la_2^-,\frac{c}{2d}[$ and increasing on $]\frac{c}{2d},\la_2^+[$. 
\\
Differentiating once again (\ref{eqpl}) with respect to $\la$, the second derivative of $\phi$ with respect to $\la$
satisfies:  
\begin{equation}
 \label{eqpll}
-d\pll''(y)+(P(\la)+\nu(y))\pll(y)  =  2d\phi(y)+2(2d\la-c)\pl(y).
\end{equation}
In the same way, $\phi$ is positive for all $\la\in ]\la_2^-,\la_2^+[$, and $\pl(\la)$ has the positivity of $(2d\la-c)$. Hence
the left term of equation (\ref{eqpll}) is positive, for all $\la\in ]\la_2^-,\la_2^+[$, and $\Psi_2$ is strictly
convex on $]\la_2^-,\la_2^+[$.
\\
With the same arguments we see that $\phi_c$, the derivative of $\phi$ with respect to $c$, satisfies
\begin{equation*}
 -d\phi_c''+(P(\la)+\nu)\phi_c = -\la\phi <0,
\end{equation*}
and then we get $\int_{\R}\phi_c(y) \nu(y)dy = \frac{d}{dc}\Psi_2(\la;c)<0.$


In order to end the proof of the proposition (\ref{sourire}), we need to study 
behaviour of $\Psi_2$ near $\la_2^-.$ Setting $\e=P(\la),$ it is sufficient to study the behaviour of the solution $\phi=\phi(y;\e)$
of 
\begin{equation}
\label{epseq1}
\begin{cases}
   -\phi''(y)+(\e+\nu(y))\phi(y)=\mu(y) \\
  \phi\in H^1(\R),\ \e>0,\ \e\to 0.
 \end{cases}
\end{equation}
The main lemma here is the following, which will evidently conclude Proposition \ref{sourire}: 
\begin{lem}\label{lemeps}
\begin{enumerate}
 \item If $\phi$ is solution of (\ref{epseq1}) then $\int_{\R}\phi(y)\nu(y)dy 
\underset{\underset{\e>0}{\e\to 0}}{\longrightarrow} \mub$ holds true. 
Moreover, $\|\phi\|_{L^{\infty}}$ is uniformly bounded on $\e$.
\item The derivative of $\phi$ with respect to $\e$, denoted $\phi_{\e}$, satisfies $\int_{\R}\phi_{\e}(y)\nu(y)dy
\underset{\underset{\e>0}{\e\to 0}}{\longrightarrow} -\infty. $
\end{enumerate}
 \end{lem}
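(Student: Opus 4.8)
The plan is to squeeze the required information out of equation (\ref{epseq1}) by integrating it against well-chosen weights and by comparing $\phi$ with the solution of the formal limit problem at $\e=0$. Since $\e+\nu>0$ and $\mu\geq0$, the elliptic maximum principle gives $\phi\geq0$. Integrating (\ref{epseq1}) over $\R$ — the boundary terms vanishing because $\phi\in H^1(\R)$ forces $\phi'(\pm\infty)=0$ — yields the basic identity
\begin{equation*}
\e\int_\R\phi+\int_\R\nu\phi=\mub .
\end{equation*}
This already shows $\int\nu\phi\leq\mub$ and reduces the first limit in part 1 to proving that $\e\int\phi\to0$.

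For the uniform $L^\infty$ bound I would produce an $\e$-independent bounded supersolution, namely the bounded solution $p\geq0$ of the limiting equation $-p''+\nu p=\mu$. Its existence and boundedness rest on the fact that $-\partial_{yy}+\nu$ carries no nontrivial bounded homogeneous solution: testing $-u''+\nu u=0$ against $u$ and using $\nu\geq0$, $\nu\not\equiv0$ forces $u\equiv0$, so the two fundamental solutions bounded at $+\infty$ and at $-\infty$ are independent and the associated Green kernel $G_0$ is bounded; as $\mu\in L^1(\R)$, the function $p=G_0\star\mu$ is bounded and nonnegative. Since $\e p\geq0$, $p$ is a supersolution of (\ref{epseq1}), whence $\phi\leq p$ and $\|\phi\|_{L^\infty}\leq\|p\|_\infty$ uniformly in $\e$. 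Standard ODE/elliptic estimates then give, as $\e\to0$, local uniform convergence $\phi\to\phi_0$ with $0\leq\phi_0\leq\|p\|_\infty$ solving $-\phi_0''+\nu\phi_0=\mu$; by uniqueness of the bounded solution $\phi_0=p$, so the whole family converges. Integrating the limit equation (again the boundary terms vanish, $\phi_0$ being bounded with $-\phi_0''=\mu-\nu\phi_0\in L^1$) gives $\int\nu\phi_0=\mub$, and since $\nu\phi\leq\|p\|_\infty\,\nu\in L^1$, dominated convergence yields $\int\nu\phi\to\mub$, i.e. $\e\int\phi\to0$, completing part 1.

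For part 2, differentiating (\ref{epseq1}) in $\e$ shows $\phi_\e$ solves $-\phi_\e''+(\e+\nu)\phi_\e=-\phi\leq0$, so $\phi_\e\leq0$, and integrating over $\R$ gives $\int\nu\phi_\e=-\int\phi-\e\int\phi_\e=-\frac{d}{d\e}\!\left(\e\int\phi\right)$. Writing $F(\e)=\e\int\phi=\mub-\int\nu\phi$, which tends to $0$ by part 1, the claim becomes $F'(\e)\to+\infty$. Differentiating once more, $\phi_{\e\e}$ solves $-\phi_{\e\e}''+(\e+\nu)\phi_{\e\e}=-2\phi_\e\geq0$, hence $\phi_{\e\e}\geq0$ and $\int\nu\phi$ is convex in $\e$, so $F$ is \emph{concave}. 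For a concave $F$ with $F(0^+)=0$ one has $F'(\e)\geq\bigl(F(2\e)-F(\e)\bigr)/\e$, so the divergence of $F'$ follows from the sharp two-sided estimate $F(\e)\sim(\ell_++\ell_-)\re$, where $\ell_\pm=\phi_0(\pm\infty)>0$ are the tail values of the limiting profile (strictly positive by the maximum principle applied to $p$).

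The heart of the matter — and the step I expect to be the main obstacle — is precisely this sharp estimate $\e\int\phi\sim(\ell_++\ell_-)\re$. In the far field $\nu,\mu$ are negligible and $\phi$ solves essentially $-\phi''+\e\phi=0$, so that $\phi(y)\sim\ell_\pm e^{-\re|y|}$ and $\int\phi\sim(\ell_++\ell_-)/\re$. Making this rigorous requires sub- and supersolutions with exponential tails $e^{-\re|y|}$ matched to the order-one inner profile across a transition zone $|y|\sim\rho(\e)$ chosen so that $\sup_{|y|\geq\rho}\nu\leq\e$; the delicate point is that the local uniform convergence $\phi\to\phi_0$ does not by itself control $\phi$ at the moving scale $\rho(\e)\to\infty$, so a uniform barrier argument there is unavoidable. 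Once $F(\e)\sim(\ell_++\ell_-)\re$ is established, concavity upgrades it to $F'(\e)\to+\infty$, and therefore $\int_\R\phi_\e\,\nu=-F'(\e)\to-\infty$, which is the assertion of part 2 and closes the proof of Proposition \ref{sourire} via its parts 2 and 3.
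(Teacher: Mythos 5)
Your part 1 is essentially correct, though by a different route than the paper (which compares $\phi$ with an explicitly computable boxcar supersolution $-\psi''+(\e+\a\mathbf{1}_{[-m_1,m_1]})\psi=Me^{-a|y|}$ rather than with the solution $p$ of the limit problem). One inaccuracy there: the Green kernel $G_0$ of $-\partial_{yy}+\nu$ on $\R$ is \emph{not} bounded --- the Jost-type solutions $u_\pm$ bounded at $\pm\infty$ grow linearly at the opposite end, so $G_0(x,x)\sim C|x|$; what saves you is that $\mu$ decays exponentially while the kernel grows only linearly, so $p=\int G_0(\cdot,y)\mu(y)\,dy$ is still bounded. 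Note also that the very existence of $u_\pm$ (solutions with finite limits at one end) is not free: it requires the double-integrability hypothesis (\ref{nucond}) on $\nu$, which is exactly what the paper's Lemma \ref{lemhomo} establishes by a fixed-point argument, and which your write-up never invokes.

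For part 2 there is a genuine gap, and you name it yourself: the whole argument hinges on the estimate $F(\e)=\e\int_\R\phi\sim(\ell_++\ell_-)\re$, which you describe as ``the main obstacle'' and only sketch via matched barriers at the moving scale $|y|\sim\rho(\e)$. Your reduction up to that point is correct and even elegant --- the identity $\int_\R\nu\phi_\e=-F'(\e)$, the concavity of $F$ from $\phi_{\e\e}\geq0$, and the secant inequality are all sound; in fact concavity plus $F(0^+)=0$ shows that a one-sided bound $F(\e)\geq c\re$ would already suffice, since $F'(\e_n)\leq M$ along a sequence would force $F(b)\leq Mb$ for all small $b$, contradicting $F(b)\geq c\sqrt{b}$. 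But that lower bound is precisely the hard analytic content: local uniform convergence $\phi\to\phi_0$ gives no control of $\phi$ at distance $\sim1/\re$ from the origin, and making the tail barrier rigorous (e.g.\ via the Riccati variable $g=-(\log\zeta)'$ for the decaying solution of $-\zeta''+(\e+\nu)\zeta=0$, where one needs $\int_A^\infty(g-\re)\leq\int_A^\infty\int_y^\infty\nu<\infty$) is again exactly where hypothesis (\ref{nucond}) must enter --- so the unproven step is not a routine verification but the crux. The paper sidesteps the moving-scale analysis entirely by a contradiction argument: setting $\vp=-\phi_\e>0$ and comparing with the solution $\vps$ of $-\vps''+(\e+\nu)\vps=\min(k,\phi)$ (the constant $k$ coming from Corollary \ref{minoration}), it assumes $\min_{[-1,1]}\vps$ stays bounded, uses the Harnack inequality and monotonicity in $\e$ to pass to a nonnegative limit solving $-\vps_0''+\nu\vps_0=k$, and then observes that every solution of this limit ODE tends to $-\infty$ like $-\gamma y^2$ (the particular solution is quadratically negative while the homogeneous solutions from Lemma \ref{lemhomo} grow at most linearly), a contradiction. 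Until you supply a complete proof of the tail estimate, your part 2 is a plausible programme rather than a proof.
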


\paragraph{Proof of the first part of the Lemma \ref{lemeps}}
An explcit computation is needed. We use a boxcar function for this. Under the assumptions on $\nu$ and $\mu$, there exist $\a,\ M,\ m_1>0$ such that: 
\begin{itemize}
 \item $\nu(y) \geq \a \mathbf{1}_{[-m_1,m_1]},\ \forall y\in \R$ (because $\nu(0)>0$, and $\nu$ is continuous);
 \item $\mu(y)\leq M e^{-a|y|},\ \forall y\in \R$ (from the exponential decay of $\mu$).
\end{itemize}

Denoting $\psi=\psi(y;\e)$ the solution of
\begin{equation}
 \label{eqpsi1}
-\psi''+(\e+\a \mathbf{1}_{[-m_1,m_1]})\psi=M e^{-a|y|},
\end{equation}
$\psi$ is a supersolution for (\ref{epseq1}) and
\begin{equation}
 \label{majphipsi1}
\forall \e >0,\ \forall y\in \R,\ 0<\phi(y;\e)\leq\psi(y;\e).
\end{equation}
We have already seen that $\forall \e>0,\ \int_{\R}\phi''(y;\e)dy=0$. Consequently, the assumption  
$\int_{\R}\phi(y)\nu(y)dy\underset{\e\to 0}{\longrightarrow} \mub$ is equivalent to 
$\e\int_{\R}\phi(y;\e)dy\underset{\e\to 0}{\longrightarrow} 0$.
To conclude, it remains to compute the solution $\psi$ and to show that
$\e\int_{\R}\psi(y;\e)dy\underset{\e\to 0}{\longrightarrow} 0$. But the solution of (\ref{eqpsi1}) can be explicitly computed, which gives that
$\|\phi(\e)\|_{L^{\infty}(\R)}$ is uniformly bounded on $\e$ and that ther exists $C>0$ such that
for $\e>0$ small and $y>m_1$, 
$$
\psi(y;\e)< C e^{-\sqrt{\e}y},
$$
so
$$
\int_\R \psi(y;\e)dy = O(\frac{1}{\sqrt{\e}}) \textrm{ as } \e\to 0
$$
and
$$
\e\int_\R \psi(y;\e)dy \underset{\e\to 0}{\longrightarrow}0,
$$
which concludes the proof of the first statement in Lemma \ref{lemeps}. Notice that we also get that there exist two constant $C_1,C_2$ not depending
on $\e$ such that
for all $y$ in $\R$, $\psi(y;\e)\leq C_1 e^{-\sqrt{\e}|y|}+C_2 e^{-a|y|},$ that will be useful later.
\qed
\bigbreak

Let us prove the second part of Lemma (\ref{lemeps}). 
In order to prove it, we will first deal with the study of the homogeneous limit differential equation.

\begin{lem}
 \label{lemhomo}
 Let us consider the scalar homogeneous equation (\ref{homogeneous}): 
 \begin{equation}
  \label{homogeneous}
  -\psi''+\nu.\psi=0.
 \end{equation}
Under the assumptions on $\nu$, there exist $\phi_1$, $\phi_2$ satisfying
\begin{itemize}
 \item $\phi_1(x)\underset{x\to+\infty}{\longrightarrow}0$, and, for x large enough, $\phi_1(x)\geq0$ ;
 \item $\exists C_1,C_2>0$ such that $C_1x \leq \phi_2(x)\leq C_2x$ when $x$ goes to $+\infty$ (notation: $\phi_2(x)=\varTheta(x)$ ) ;
\end{itemize}
such that
$$
\begin{cases}
\psi_1:=1+\phi_1 \\
\psi_2:=\phi_2(1+\phi_1)
\end{cases}
$$
is a fundamental system of solutions of (\ref{homogeneous}).
\end{lem}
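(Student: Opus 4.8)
The plan is to treat (\ref{homogeneous}) as a decaying perturbation of the trivial equation $\psi''=0$, whose solutions are $1$ and $x$, and to construct solutions asymptotic to these two at $+\infty$. First I would look for $\psi_1$ in the form $\psi_1=1+\phi_1$ with $\phi_1\to 0$; substituting into (\ref{homogeneous}) gives $\phi_1''=\nu(1+\phi_1)$. Imposing $\phi_1(x),\phi_1'(x)\to 0$ as $x\to+\infty$ and integrating twice from $x$ to $+\infty$ converts this into the fixed-point equation
\[
\phi_1(x)=\int_x^{+\infty}\!\!\int_s^{+\infty}\nu(t)\,(1+\phi_1(t))\,dt\,ds=:T[\phi_1](x).
\]

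The integrability hypothesis (\ref{nucond}) is exactly what makes this a contraction. By Fubini it is equivalent to the finite-first-moment condition $\int_0^{+\infty}y\,\nu(y)\,dy<+\infty$, and the same reordering of integration yields $\int_x^{+\infty}\!\int_s^{+\infty}\nu(t)\,dt\,ds=\int_x^{+\infty}(t-x)\nu(t)\,dt\leq\int_x^{+\infty}t\,\nu(t)\,dt\to 0$ as $x\to+\infty$. Hence I can fix $X$ so large that this double tail is $<\tfrac12$ on $[X,+\infty)$; then $T$ maps the closed unit ball of $C_b([X,+\infty))$ into itself and is a contraction there, so it has a unique fixed point $\phi_1$. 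Since $\phi_1$ is bounded by the double tail it satisfies $\phi_1\to 0$; and since $\nu\geq 0$ while $1+\phi_1\geq 1-\|\phi_1\|_{\infty}>0$, the integrand defining $T[\phi_1]$ is nonnegative, whence $\phi_1\geq 0$ on $[X,+\infty)$. I then extend $\psi_1=1+\phi_1$ to a global solution of the linear equation (\ref{homogeneous}) by the usual Cauchy theory, producing the first claimed solution.

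For $\psi_2$ I would use reduction of order from the nonvanishing solution $\psi_1$; after possibly enlarging $X$, the convergence $\psi_1\to 1$ gives $c_1\leq\psi_1\leq c_2$ on $[X,+\infty)$ for positive constants $c_1,c_2$. Because (\ref{homogeneous}) has no first-order term the Wronskian is constant, and normalizing it to $1$ gives $\psi_2=\psi_1\,\phi_2$ with $\phi_2(x)=\int_X^x\psi_1(s)^{-2}\,ds$. The bounds on $\psi_1$ make the integrand bounded above and below by positive constants, so $C_1x\leq\phi_2(x)\leq C_2x$ for large $x$, i.e. $\phi_2=\varTheta(x)$; and $\psi_2=\phi_2(1+\phi_1)$ is precisely the required form. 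The pair $(\psi_1,\psi_2)$ has Wronskian $1\neq 0$ and is therefore a fundamental system.

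The main obstacle, and the only genuinely delicate point, is verifying that (\ref{nucond}) forces a contraction: everything hinges on recognizing (\ref{nucond}) as a finite-first-moment condition on $\nu$, so that the double-tail kernel $\int_x^{+\infty}(t-x)\nu(t)\,dt$ is finite and vanishes at $+\infty$. Once this is established, the contraction estimate, the sign and decay of $\phi_1$, the reduction of order, and the Wronskian computation are all routine. The symmetric behaviour at $-\infty$, if needed, follows identically from the second half of (\ref{nucond}).
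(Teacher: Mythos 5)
Your proposal is correct and follows essentially the same route as the paper: both convert $\psi_1=1+\phi_1$ into the double-tail fixed-point equation $\phi_1(x)=\int_x^{+\infty}\int_s^{+\infty}\nu(t)(1+\phi_1(t))\,dt\,ds$, solve it by the Banach contraction principle using (\ref{nucond}) to make the kernel small on a half-line, and then obtain $\psi_2$ by reduction of order with $\phi_2(x)=\int \psi_1(s)^{-2}ds$ and the $\varTheta(x)$ estimate from $\phi_1\to 0$. The only cosmetic differences are that you contract on the closed unit ball of $C_b([X,+\infty))$ and deduce nonnegativity and decay a posteriori (via the Fubini identification of (\ref{nucond}) with a finite first moment), whereas the paper builds these properties into the invariant set $\mathcal{E}$ from the start.
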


\begin{proof}
 {\bf Construction of $\phi_1$}: let $\psi:=1+\phi_1$ be a solution of (\ref{homogeneous}). Thus, $\phi_1$ must satisfy
 \begin{equation}
 \label{homogeneous2}
 -\phi_1''+\nu+\nu.\phi_1 = 0. 
 \end{equation}
 Let us show that there exists a solution of (\ref{homogeneous2}) which is nonnegative for $x$ large enough and tends to $0$ as $x$ goes to $+\infty$.
 Let $M\geq0$ such that $\int_M^{\infty}\int_x^{\infty}\nu(y)dydx<1$ which is possible thanks to the assumption (\ref{nucond}) on $\nu$. 
Now, define
 $$
 \mathcal{E}:=\{\phi\in C([M,+\infty[)/ \forall x\geq M,\phi(x)\geq0 \textrm{ and }\phi(x)\underset{x\to\infty}{\longrightarrow}0 \}
 $$
 and 
 $$
 F\: \ \begin{cases}
         \mathcal{E} \to & \mathcal{E} \\
         \phi \mapsto & F\phi: x\mapsto \int_x^{\infty}\int_y^{\infty}(1+\phi(z))\nu(z)dzdy.
        \end{cases}
 $$
 From the hypothesis on $\mathcal{E}$ and $\nu,$ $F$ is well defined.
$\mathcal{E}$ is a closed subset of the Banach space $C_0([M,\infty[)$. The choice of $M$ implies that $F$ is a contraction. 
From a classical Banach fixed point argument, there exists a unique positive solution $\phi_1$ in $C([M,+\infty[)$ of $\ref{homogeneous2}$ satisfying
 $\phi(x)\underset{x\to+\infty}{\longrightarrow}0.$

%
Moreover, without loss of generality, we can only consider the case $M=0$.
 
 {\bf Construction of $\phi_2$}: we are looking for a second solution of (\ref{homogeneous}) in the form $\psi_2=\phi_2.\psi_1$.
Integrating the equation we get for $x\geq 0$: 
 \begin{equation*}
  \phi_2(x)=\int_0^x \frac{dy}{(1+\phi_1(y))^2},
 \end{equation*}
and $\psi_2:=\phi_2(1+\phi_1)$ is a second solution of the homogeneous equation (\ref{homogeneous}). Finally, considering that
$\phi_1(x)\to 0$ with $x\to +\infty$, we get the desired estimate for $\phi_2$.
\end{proof}
Of course, we have a similar result for $x\to-\infty.$
This lemma first allows us to give a useful lower bound of $\phi(y;\e)$ at the limit $\e=0.$

\begin{cor}
 \label{minoration}
 Let $\phi=\phi(y;\e)$ be the solution of (\ref{epseq1}). There exists $k>0$ such that, $\forall y\in\R$, 
 $\exists \e_y,\ \forall\e<\e_y$, $\phi(y;\e)\geq k,$ and this uniformly on every compact set in $y$.
\end{cor}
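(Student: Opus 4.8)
The plan is to obtain the lower bound as the limit $\e\to 0$, exploiting monotonicity in $\e$. First I would observe that $\phi(y;\e)$ is monotone in $\e$: differentiating \eqref{epseq1} with respect to $\e$ shows that $\phi_\e$ solves $-\phi_\e''+(\e+\nu)\phi_\e=-\phi<0$, so by the elliptic maximum principle (the zeroth order coefficient $\e+\nu$ being positive and the right-hand side negative, with $\phi_\e\in H^1$) one gets $\phi_\e<0$ on $\R$. Hence for each fixed $y$ the map $\e\mapsto\phi(y;\e)$ is strictly decreasing, and since $\|\phi(\cdot;\e)\|_{L^\infty}$ is bounded uniformly in $\e$ by the first part of Lemma \ref{lemeps}, the pointwise limit $\phi_0(y):=\lim_{\e\to0}\phi(y;\e)=\sup_{\e>0}\phi(y;\e)$ exists and is finite. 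By standard elliptic estimates the convergence holds in $C^2_{loc}$, so $\phi_0$ is a nonnegative bounded solution of the limiting equation $-\phi_0''+\nu\phi_0=\mu$; being a monotone limit of continuous functions towards a continuous limit, the convergence is uniform on every compact set by Dini's theorem. Since $\phi_0\geq\phi(\cdot;\e)>0$ for any fixed $\e$, the strong maximum principle gives $\phi_0>0$ on $\R$. The corollary is thereby reduced to proving that $k_0:=\inf_{\R}\phi_0>0$.

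Next I would analyse $\phi_0$ at infinity. Writing the equation as $\phi_0''=\nu\phi_0-\mu$ and using that $\mu$ decays exponentially while $\nu$ satisfies the twice-integrability condition \eqref{nucond}, the right-hand side is integrable, indeed twice integrable, because $\phi_0$ is bounded. Consequently $\phi_0'$ has finite limits at $\pm\infty$, which must vanish since $\phi_0$ is bounded, and then $\phi_0$ itself converges to finite limits $A_\pm\geq0$ as $y\to\pm\infty$. Thus $k_0>0$ will follow once I establish $A_+>0$ and $A_->0$, the finite part being harmless since $\phi_0$ is continuous and strictly positive.

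The heart of the argument is the positivity of these limits, for which I would use the bounded homogeneous solution $\psi_1=1+\phi_1$ of Lemma \ref{lemhomo}, which satisfies $\psi_1\geq1>0$ and $\psi_1\to1$, $\psi_1'\to0$ at $+\infty$. Setting $w:=\phi_0'\psi_1-\phi_0\psi_1'$ and using the two identities $\psi_1''=\nu\psi_1$ and $\phi_0''=\nu\phi_0-\mu$, a direct computation gives $w'=-\mu\psi_1\leq0$; since $w(+\infty)=0$, integrating from $y$ to $+\infty$ yields $w(y)=\int_y^{+\infty}\mu\psi_1\geq0$, hence
\[
\frac{d}{dy}\!\left(\frac{\phi_0}{\psi_1}\right)=\frac{w}{\psi_1^2}\geq0\qquad\text{on }[0,+\infty).
\]
Thus $\phi_0/\psi_1$ is nondecreasing, and being strictly positive its supremum coincides with its limit $A_+$ at $+\infty$; were $A_+=0$, every value of the quotient would be $\le0$, a contradiction. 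Therefore $A_+>0$, and the symmetric construction at $-\infty$ gives $A_->0$.

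Finally I would conclude: $\phi_0$ is continuous, strictly positive, with $\phi_0(\pm\infty)=A_\pm>0$, so $k_0=\inf_\R\phi_0>0$; put $k:=k_0/2$. For each $y$, $\phi(y;\e)\nearrow\phi_0(y)\geq k_0>k$ forces $\phi(y;\e)\geq k$ for all $\e$ below some threshold $\e_y$, and the uniformity on compact sets is exactly the Dini convergence from the first step. I expect the positivity of $A_\pm$ to be the main obstacle: unlike the upper bounds obtained by boxcar supersolutions, no fixed-$\e$ subsolution can be bounded below by a positive constant on all of $\R$, since each $\phi(\cdot;\e)$ decays to $0$ at infinity. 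The uniform lower bound only materialises at $\e=0$ and must be extracted from the global structure of the limiting equation, which is precisely what the monotone-quotient identity against $\psi_1$ delivers.
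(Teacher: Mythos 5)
Your proof is correct, and its skeleton matches the paper's --- monotonicity of $\phi(\cdot;\e)$ in $\e$ (obtained exactly as in the paper, from the sign of the derivative $\phi_\e$), the uniform $L^\infty$ bound of Lemma \ref{lemeps}, a locally uniform monotone limit as $\e\to0$, and Lemma \ref{lemhomo} to control that limit at infinity --- but the key step, positivity at infinity, is implemented by a genuinely different mechanism. The paper does not pass to the limit in $\phi$ itself: it first replaces $\mu$ by a compactly supported minorant $\mu_c\le\mu$ and takes the monotone limit $\underline{\phi}_0$ of the corresponding smaller solution; outside the support of $\mu_c$ this limit solves the homogeneous equation, so it decomposes on the fundamental system of Lemma \ref{lemhomo} as $\alpha^{\pm}\psi_1+\beta^{\pm}\psi_2$, boundedness kills the $\psi_2$ component (this is where the growth estimate $\phi_2(x)=\varTheta(x)$ is used), and positivity forces $\alpha^{\pm}>0$. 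You instead keep the inhomogeneous limit equation $-\phi_0''+\nu\phi_0=\mu$, use the twice-integrability hypothesis (\ref{nucond}) together with the exponential decay of $\mu$ to show that $\phi_0'\to0$ and that $\phi_0$ has finite limits $A_\pm$ at $\pm\infty$, and then get $A_\pm>0$ from the Wronskian identity $w'=-\mu\psi_1\le0$ for $w=\phi_0'\psi_1-\phi_0\psi_1'$, which makes $\phi_0/\psi_1$ nondecreasing. Each route buys something: the paper's minorant trick avoids any asymptotic analysis of the inhomogeneous problem (no need for the limits $A_\pm$ to exist), at the price of needing both solutions $\psi_1,\psi_2$ of the homogeneous equation; yours needs only the bounded solution $\psi_1$, makes the role of hypothesis (\ref{nucond}) explicit, and yields the extra information that the limit profile itself tends to positive constants at $\pm\infty$. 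Two facts you use implicitly are easily supplied and at the paper's level of rigor: $\psi_1'\to0$ at $+\infty$ (from $\psi_1''=\nu\psi_1$, $\nu\in L^1$ and boundedness of $\psi_1$), and $\psi_1\ge1$ on the half-line, which is the paper's normalization $M=0$ in Lemma \ref{lemhomo}.
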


\begin{proof}
 Since $\mu\not\equiv 0$ there exists a nonnegative compactly supported function $\mu_c\not\equiv 0$ such that $0\leq\mu_c \leq \mu.$
 Let us now consider the (unique) solution $\phis=\phis(y;\e)$ of 
 \begin{equation}
\label{epseq2}
\begin{cases}
   -\phis''(y)+(\e+\nu(y))\phis(y)=\mu_c(y) \\
  \phis\in H^1(\R),\ \e>0.
 \end{cases}
\end{equation}
From the first part of Lemma \ref{lemeps}, we know that $\exists K>0,\forall y\in\R,\forall \e> 0$, 
$0<\phis(y;\e)\leq \phi(y;\e)<K.$
Let us recall that for fixed $y\in\R,\ \phis(y;\e)$ is increasing with $\e\to 0$ and bounded by $K$. Hence there exists a positive function
$\phis_0$ such that $\phis(y;\e)\underset{\e\to 0}{\longrightarrow}\phis_0(y)$. Moreover, from the uniform boundedness of $\phis(\e)$ and
Ascoli's theorem, the convergence is uniform for $\phis$ and $\phis'$ in every compact set. Thus, $\phis_0$ satisfies in the classical sense
\begin{equation*}
\begin{cases}
   -\phis_0''(y)+\nu(y)\phis_0(y)=\mu_c(y) \\
  0<\phis_0\leq K.
 \end{cases}
 \end{equation*}
As $\mu_c$ is compactly supported, for $|y|$ large enough, let us say greater than $A>0$, $\phis_0$ is a solution of (\ref{homogeneous}), that is
to say, in the positive semi-axis
\begin{equation*}
 \begin{cases}
 -\phis_0''(y)+\nu(y)\phis_0(y)=0, &  y>A \\
 0<\phis_0(y)\leq K<+\infty & y>A.
 \end{cases}
\end{equation*}
Thus, there exist $\a^+,\b^+$ such that 
$$
\forall y>A,\ \phis_0(y)=\a^+(1+\phi_1(y))+\b^+\phi_2(y)(1+\phi_1(y)),
$$
where $\phi_1$ and $\phi_2$ are defined in Lemma \ref{lemhomo}. Now considering that $\phi_1(y)=o(1)$ and $\phi_2(y)=\varTheta(y)$ in $y\to+\infty$,
as $\phis_0$ is bounded, $\b^+=0.$ Then, as $\phis_0>0,$ $\a^+>0$. We have a similar result for $y<-A,$ with $\b^-=0$ and $\a^->0$.
Finally, define $$k=\frac{1}{2}\min(\a^-,\a^+,\min\{\phis_0(y),y\in[-A,A]\})>0$$ and the proof is concluded.
\end{proof}

\paragraph{Proof of the second part of Lemma \ref{lemeps}}
Differentiating equation (\ref{epseq1}) with respect to $\e$, we get for the derivative $\phi_{\e}$
\begin{equation}
 \label{eqpes2}
-\phi_{\e}''(y;\e)+(\e+\nu(y))\phi_{\e}(y;\e)=-\phi(y;\e).
\end{equation}
Since $\phi$ is positive, we get that $\phi_{\e}$ is negative. Let us denote 
\begin{equation*}
\vp(y)=\vp(y;\e):=-\phi_\e (y;\e)>0. 
\end{equation*}
We have previously seen (in the proof of the first part of Proposition \ref{sourire}) that $\forall y\in\R,$ $\frac{d}{d\e}\vp(y;\e)<0$, \textit{i.e.}
$\vp$ is increasing with $\e\to 0,\e>0.$ Our purpose is to show that in a neighbourhood of $0,$ $\inf(\vp(\e))\underset{\e\to0}{\longrightarrow}+\infty.$
For all $\e>0,$ define the function $\vps=\vps(y;\e)$ as the unique solution of 
\begin{equation}
 \label{soussolderivee}
 \begin{cases}
 -\vps''(y;\e)+(\e+\nu(y))\vps(y;\e)=\min(k,\phi(y;\e)) \\
 \vps\in H^1(\R).
 \end{cases}
\end{equation}
The function $\vps$ is obviously well-defined. By its definition, the elliptic maximum principle ensures us that
$0<\vps\leq \vp,\ \forall y\in\R,\e>0.$ We have also to notice that uniformly on every compact set in $y,$ 
$\min(k,\phi(y;\e))=k$ for $\e$ small enough (consequence of corollary \ref{minoration}).
Assume by way of contradiction that 
\begin{equation}
 \label{hypothesecontr}
 \left(\underset{y\in[-1,1]}{\min}(\vps(y;\e))\right)_\e \textrm{ is bounded.}
\end{equation}
Let us show that it is inconsistent with the fact that $\vps>0,\forall \e>0.$
As $\min(k,\phi(y;\e))$ is uniformly bounded, from Harnack inequalities (see ~\cite{GT}, Theorem 8.17 and 8.18) we know that
for all $R>0,$ there exist $C_1=C_1(R),C_2=C_2(R),$ independent of $\e$, such that for all $\e>0,$ 
$$
\underset{[-R,R]}{\sup}\vps \leq C_1\underset{[-R,R]}{\inf}(\vps+C_2).
$$
Combining this and hypothesis (\ref{hypothesecontr}), we get that $\left(\vps(y;\e) \right)_{\e>0}$ is increasing with $\e\to0$ 
and uniformly in every compact set in $y$. Using the same argument as in the proof of Corollary \ref{minoration}, $(\vps(\e))_\e$
converges locally uniformly to some function $\vps_0$ which satisfies in the classical sense
\begin{equation}
\label{eqlimite2}
\begin{cases}
 -\vps_0''(y)+ \nu(y)\vps_0(y) = k \\
 \vps_0(y)\geq 0,\ \forall y\in\R.
\end{cases}
\end{equation}
So there exist $\a,\b\in\R$ such that $\vps_0=\a(1+\phi_1)+\b\phi_2(1+\phi_1)+\phi_s,$ where $\phi_1,\phi_2$
are defined in Lemma \ref{lemhomo} and $\phi_s$ is a particular solution of (\ref{eqlimite2}). Thus, for $x\geq0,$
$$
\phi_s(y) = -k\left(1+\phi_1(y)\right)\left(1+\phi_1(0)\right)\left( \int_0^y\int_z^y \frac{1+\phi_1(z)}{(1+\phi_1(t))^2}dtdz \right).
$$
Now, recall that $\phi_1>0,\phi_1(y)=o(y)$ as $y$ goes to $+\infty$. So there exists $\gamma>0,$ $\phi_s(y)\underset{y\to\infty}{\sim}-\gamma.y^2.$
As a result, for $y\to \infty$, 
\begin{equation*}
\begin{cases}
 \vps_0(y) = -\gamma.y^2+o(y^2)\underset{y\to+\infty}{\longrightarrow}-\infty \\
 \vps_0\geq0,\ \forall y\in\R,
\end{cases}
\end{equation*}
which is obviously a contradiction. So the first hypothesis (\ref{hypothesecontr}) is false, which gives, combined with 
the monotonicity in $\e,$ 
$$
\underset{y\in[-1,1]}{\min}(\vps(y;\e))\underset{\e\to0}{\longrightarrow}+\infty,
$$
and then, as $\nu$ is continuous and $\nu(0)>0,$ 
$$
\int_\R \phi_\e(y;\e)\nu(y)dy \underset{\e\to0}{\longrightarrow} -\infty,
$$
and the proof of the main Lemma \ref{lemeps} is complete.
\qed

\subsection{Intersection of $\Gamma_1$ and $\Gamma_2$, supersolution}
\paragraph{First case: $D>2d$.}
If $D>2d$, we have of course $\frac{c}{D}<\frac{c}{2d},\ \forall c\geq c_{KPP}.$ Thus,
for $c$ close enough to $c_{KPP}$, $\Gamma_2$ does not intersect the closed convex hull of $\Gamma_1$. But 
since $$\frac{c}{D} \underset{c\to +\infty}{\longrightarrow} +\infty\textrm{ and }\la_2^-(c)\underset{c\to +\infty}{\longrightarrow} 0^+,$$
there exists $$c_*=c_*(D)>c_{KPP}$$ such that $\forall c>c_*,\ \Gamma_1$ and $\Gamma_2$ intersect, and $\forall c<c_*$, $\Gamma_2$
does not intersect the closed convex hull of $\Gamma_1$. Moreover, the strict concavity of $\Gamma_1$ and the strict convexity
of $\Gamma_2$ allow us to assert that for $c=c_*,\ \Gamma_1 \textrm{ and }\Gamma_2$ are tangent on $\la=\la(c_*)$ 
and for $c>c_*,\ c$ close to $c_*$, $\Gamma_1$ and $\Gamma_2$ intersect twice, at $\la(c)^+$ and $\la(c)^-$. 
The different situations are illustrated in fig. (\ref{Dsup2d}).
\\
When $c$ is such that $\la_2^-\leq \frac{c}{D}$, i.e.$\displaystyle c\geq \frac{D}{2\sqrt{dD-d^2}}c_{KPP}$
there is only one solution for $\la=\la(c).$


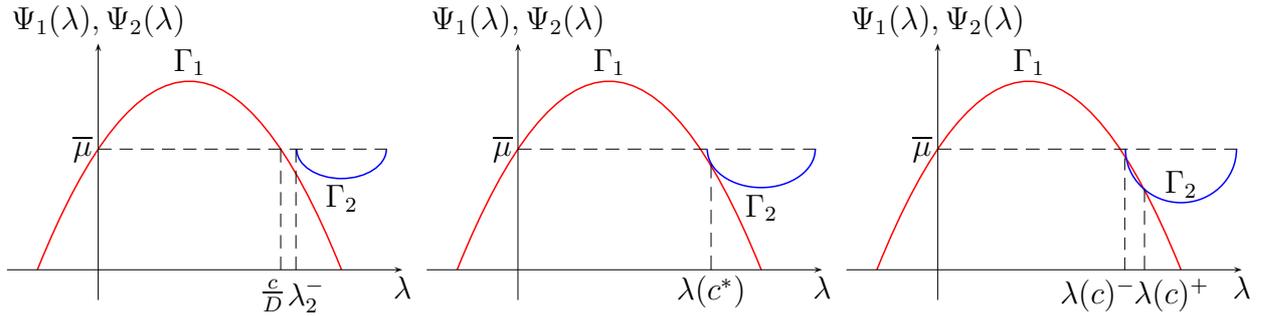
\begin{figure}[ht]
   \begin{minipage}[c]{.31\linewidth}
   \psset{unit=0.4}
\psset{linewidth=0.03}
\begin{pspicture}(-3,-1)(9,10)

\psparabola[linewidth=0.05,linecolor=red](-2,0)(3,6.25)
\psellipticarc[linewidth=0.05,linecolor=blue](8,4)(1.5,1){180}{360}
\psline{->}(-3,0)(10,0)
\psline{->}(0,-1)(0,7.5)
\psline[linestyle=dashed](0,4)(9.5,4)
\uput{0.2}[90](3,6.25){$\Gamma_1$}
\uput{0.2}[-90](8,3){$\Gamma_2$}
\uput{0.2}[-90](10,0){$\lambda$}
\uput{0.2}[180](0,4){$\mub$}
\psline[linestyle=dashed](6,4)(6,0)
\uput{0.3}[-110](6,0){$\frac{c}{D}$}
\psline[linestyle=dashed](6.5,4)(6.5,0)
\uput{0.3}[-70](6.5,0){$\lambda_2^-$}

\uput{0.2}[90](0,7.5){$\Psi_1(\lambda),\Psi_2(\lambda)$}
\end{pspicture}
   \end{minipage} \hfill
   \begin{minipage}[c]{.31\linewidth}
   \psset{unit=0.4}
\psset{linewidth=0.03}

\begin{pspicture}(-3,-1)(9,10)

\psparabola[linewidth=0.05,linecolor=red](-2,0)(3,6.25)
\psellipticarc[linewidth=0.05,linecolor=blue](8,4)(1.8,1.3){180}{360}
\psline{->}(-3,0)(10,0)
\psline{->}(0,-1)(0,7.5)
\psline[linestyle=dashed](0,4)(9.8,4)
\uput{0.2}[90](3,6.25){$\Gamma_1$}
\uput{0.2}[-90](8,2.7){$\Gamma_2$}
\psline[linestyle=dashed](6.35,0)(6.35,3.5)
\uput{0.2}[-90](6.35,0){$\lambda(c^*)$}
\uput{0.2}[-90](10,0){$\lambda$}
\uput{0.2}[180](0,4){$\mub$}
\uput{0.2}[90](0,7.5){$\Psi_1(\lambda),\Psi_2(\lambda)$}

\end{pspicture}
   \end{minipage} \hfill
   \begin{minipage}[c]{.31\linewidth}
   \psset{unit=0.4}
\psset{linewidth=0.03}

\begin{pspicture}(-3,-1)(9,10)

\psparabola[linewidth=0.05,linecolor=red](-2,0)(3,6.25)
\psellipticarc[linewidth=0.05,linecolor=blue](8,4)(1.85,1.8){180}{360}
\psline{->}(-3,0)(10,0)
\psline{->}(0,-1)(0,7.5)
\psline[linestyle=dashed](0,4)(9.85,4)
\psline[linestyle=dashed](6.15,0)(6.15,3.7)
\psline[linestyle=dashed](6.8,0)(6.8,2.7)
\uput{0.3}[-120](6.15,0){$\lambda(c)^-$}
\uput{0.3}[-60](6.8,0){$\lambda(c)^+$}

\uput{0.2}[90](3,6.25){$\Gamma_1$}
\uput{0.2}[90](8,2.2){$\Gamma_2$}

\uput{0.2}[-90](10,0){$\lambda$}
\uput{0.2}[180](0,4){$\mub$}
\uput{0.2}[90](0,7.5){$\Psi_1(\lambda),\Psi_2(\lambda)$}

\end{pspicture}
   \end{minipage}
   \caption{\label{Dsup2d}Case $D>2d$ ; $c<c_*$ (left), $c=c_*$ (middle), $c>c_*$, close to (right)}
\end{figure}

\paragraph{Second case: $D=2d$.}
If $D=2d$, then the point $(\frac{c}{2d},\mub)$ belongs to $\Gamma_1$. Therefore, for all $c>c_{KPP}$, $\Gamma_1$ and
$\Gamma_2$ intersect once at $\la=\la(c)$. We set: 
$$
c_*(2d):=c_{KPP}.
$$
\paragraph{Third case: $D<2d$.}
If $D<2d$, we have $\frac{c}{D}>\frac{c}{2d}$. Then, $\forall c>c_{KPP},\ \la_2^-(c)<\frac{c}{D}$, 
the left part of $\Gamma_2$ is strictly below $\Gamma_1$, and every $c>c_{KPP}$ gives a super-solution. We set again: 
$$
c_*(D):=c_{KPP}.
$$
All of this concludes the proof of Proposition \ref{spreading_speed}.
Moreover, we can assert from geometrical considerations that 
\begin{equation}
\label{inegaliteD}
 \frac{c_*}{D}\leq \frac{c_*-\sqrt{c_*^2-c_{KPP}^2}}{2d}\leq \frac{c_*+\sqrt{c_*^2+4D\mub}}{2D}.
\end{equation}
It was proved in ~\cite{BRR1} that (\ref{inegaliteD}) implies that
\begin{equation*}
 \sqrt{4\mub^2+f'(0)^2}-2\mub\leq \underset{D\to+\infty}{\lim\inf}\frac{c_*^2}{D}\leq 
 \underset{D\to+\infty}{\lim\sup}\frac{c_*^2}{D}\leq f'(0).
\end{equation*}


\subsection{}{Explicit computation of $\Psi_2:=\Psi_2^0$ in the reference case (\ref{BRReq2})}
In the limit case, (\ref{eqgeneralesurphi}) can be written as follows, setting $P(\la)=-d\la^2+c\la-\fp$: 
\begin{equation}\label{eqsurphiBRR}
 -d\phi''(y)+\lp P(\la)+\nub\d_0\rp\phi(y)=\mub\d_0.
\end{equation}
Thus, an explicit computation (see \cite{BRR1} or \cite{Pauthier2}) gives
\begin{equation}\label{solPsi2BRR}
 \Psi_2^0(\la) := \nub\phi(0) = \frac{\nub\mub}{\nub+2\sqrt{dP(\la)}}.
\end{equation}
Notice that this function satisfies all properties given by Proposition \ref{sourire}.

\section{Spreading}
In order to prove that solutions spread at least at speed $c_*$, we are looking for compactly supported
general stationary subsolution in the moving framework at velocity $c<c_*$, arbitrarily close to $c_*.$
We consider the linearised system penalised by $\d>0$ in the moving framework :

\begin{equation}
\label{pendelta}
 \begin{cases}
   \partial_t u-D \partial_{xx} u+c\partial_x u = -\mub u+\int \nu(y)v(t,x,y)dy & x \in \R,\ t>0 \\
 \partial_t v-d\Delta v+c\partial_x v = (f'(0)-\d)v +\mu(y)u(t,x)-\nu(y)v(t,x,y) & (x,y)\in \R^2,\ t>0.
 \end{cases}
\end{equation}
The main result is here the following: 
\begin{prop}\label{soussol}
 Let $c_*=c_*(D)$ be as in the previous section. Then, for $c<c_*$ close enough to $c_*$, there exists $\d>0$
such that (\ref{pendelta}) admits a nonnegative, compactly supported, generalised stationary subsolution
$(\underline{u},\underline{v})\not\equiv (0,0)$.
\end{prop}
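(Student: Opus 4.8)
The plan is to build $(\su,\sv)$ from the oscillating, exponentially localised solutions of the \emph{penalised} linearised system, in the spirit of the classical compactly supported KPP subsolution and of the generalised subsolutions of \cite[Proposition 3.3]{BRR1}. Penalising by $\d$ amounts to replacing $\fp$ by $\fp-\d$ throughout Section 3: the whole analysis of $\Psi_2$ carries over with $P(\la)=\la c-d\la^2-(\fp-\d)$, producing a penalised graph $\Gamma_2^\d$, a penalised $c_{KPP}^\d=2\sqrt{d(\fp-\d)}$, and a penalised speed $c_*^\d$ with $c_*^\d\to c_*$ as $\d\to0^+$. I would fix $c<c_*$ close to $c_*$ and, using that $c_*^\d$ depends continuously on $\d$ with limit $c_*$, choose $\d>0$ small so that $c$ lies \emph{just below} $c_*^\d$. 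The interesting case is $D>2d$; the cases $D\le2d$ reduce to the transversal (essentially scalar) situation and are easier.

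Next I would produce complex roots. At $c=c_*^\d$ the graphs are tangent, so $F(\la):=\Psi_1(\la)-\Psi_2^\d(\la)$ has a double real root $\la_*$ with $F(\la_*)=F'(\la_*)=0$; strict concavity of $\Gamma_1$ and strict convexity of $\Gamma_2$ give $F''(\la_*)<0$. Since $\partial_c\Psi_1=\la>0$ while $\frac{d}{dc}\Psi_2<0$ by Proposition \ref{sourire}, one has $\partial_cF>0$, so the expansion $F(\la;c)\approx\frac12F''(\la_*)(\la-\la_*)^2+\partial_cF\,(c-c_*^\d)$ shows that, for $c$ slightly below $c_*^\d$, the two roots leave the real axis as a conjugate pair $\la_\pm=\la_*\pm i\la_i+o(\la_i)$ with $\la_i\sim C\sqrt{c_*^\d-c}$ small. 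For $\la$ complex near the real point $\la_*$ the operator $-d\partial_{yy}+P(\la)+\nu$ stays invertible on $H^1$ (as $P(\la_*)>0$), so by the analyticity established in the proof of Proposition \ref{sourire} the profile $\phi_+:=\phi(\cdot;\la_+)$ is well defined and close to the real positive profile $\phi_*:=\phi(\cdot;\la_*)$, with imaginary part $O(\la_i)$.

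The pair $e^{-\la_+ x}(1,\phi_+(y))$ solves the penalised stationary system in the moving frame; taking real parts yields the real solution $e^{-\la_r x}\cos(\la_i x)$ on the road and $e^{-\la_r x}\big[\cos(\la_i x)\,a(y)+\sin(\la_i x)\,b(y)\big]$ in the field, where $\phi_+=a+ib$ with $a\approx\phi_*>0$. I would keep a single positive lobe in $x$ and set everything to $0$ outside. On the interior of its support this candidate is an exact solution, hence a subsolution, of (\ref{pendelta}); at the first zero it vanishes with a transversal derivative, so the cut-off creates exactly the concave kink (a favourably signed Dirac mass in $-D\su''$, resp. $-d\Delta\sv$) that makes $\max\{(\su,\sv),0\}$ a \emph{generalised} subsolution in the sense discussed after Proposition \ref{comparaison}. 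Finally, since $f(s)\ge(\fp-\d)s$ for $s$ small, rescaling the amplitude down keeps it a subsolution of the full penalised system.

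The hard part will be the positivity of the field profile and the compact support in $y$. The bracket rewrites as $\rho(y)\cos(\la_i x-\theta(y))$ with amplitude $\rho\approx\phi_*$ and small phase $\theta=\arctan(b/a)=O(\la_i)$, so its $x$-lobe is an interval whose endpoints depend on $y$ through the curve $\{\la_i x-\theta(y)=\pm\pi/2\}$; near those endpoints $\cos(\la_i x)$ is small and the $O(\la_i)$ term $\sin(\la_i x)\,b$ is not negligible, so one must track this $y$-dependent zero set and verify that the kink keeps the right sign all along the curved boundary. Moreover $\phi_+$ only decays (it is $H^1$, with a bound of the form $C_1e^{-\sqrt{\e}|y|}+C_2e^{-a|y|}$ as in Lemma \ref{lemeps}), so a further truncation in $y$ is required for genuine compact support; this truncation perturbs the road coupling $\int\nu\,\sv\,dy$, and the delicate point is to perform it at $|y|$ large enough that the lost contribution is dominated and every inequality, including the road equation, survives. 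This control of the $y$-profile for complex $\la$ is the heart of the argument.
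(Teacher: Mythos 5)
Your overall strategy is the paper's: penalise $\fp$ by $\d$, locate the tangency of $\Gamma_1$ with the penalised $\Gamma_2$, let the double root split into a conjugate pair $\la_*\pm i\la_i$ with $\la_i\sim\sqrt{c_*^\d-c}$ for $c$ slightly below the penalised speed, take the real part of the exponential solution and cut it along its zero set. That part of your argument is sound (the paper makes the root-splitting rigorous by applying the implicit function theorem in the ordinate $\b$ and an adaptation of Rouch\'e's theorem to handle the remainder, but your expansion using $\partial^2_{\la\la}(\Psi_1-\Psi_2)<0$ and $\partial_c(\Psi_1-\Psi_2)>0$ is essentially the same computation). The genuine gap is exactly the point you defer to the end and call ``the heart of the argument'': compact support in $y$. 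Working on all of $\R$, the profile is strictly positive at every height, so truncating at $|y|=Y$ where $\Re{V}>0$ creates a downward \emph{jump}, not a kink. Such a chopped function cannot be a generalised subsolution in the sense used here (a supremum of classical, continuous subsolutions): any continuous approximation from below that stays close to the positive value just inside the cut must have a concave shoulder where $-d\Delta\sv$ is large and positive, violating the field inequality; equivalently, the distributional Laplacian of the chopped function contains a $\d_Y'$-type singular term which has no sign. This obstruction is structural, not quantitative: taking $Y$ large only shrinks the lost contribution to the road coupling $\int\nu(y)\sv(x,y)\,dy$, which is the lesser of the two problems, and does nothing to repair the field equation at the cut.

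What the paper does instead, and what your proposal is missing, is to build the $y$-compactness into the eigenvalue problem from the start: the exponential ansatz is solved on the strip $\Omega^L=\R\times(-L,L)$ with Dirichlet conditions, i.e. $\vp\in H^1_0(-L,L)$ in (\ref{eqsoussol}). This yields a curve $\Gamma_2^L$ below $\Gamma_2$, hence a speed $c_*^L$ with $c_{KPP}<c_*^L<c_*$ and $c_*^L\to c_*$ as $L\to\infty$, $\d\to0$ (the two limits commute, by (\ref{convergencegamma})), and the tangency/complex-root argument is then run for the strip problem, for which the profile vanishes at $y=\pm L$ by construction. Every cut is then along a genuine zero level set, producing only favourable kinks, and compact support comes for free. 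Two further remarks. First, since the road equation in (\ref{pendelta}) involves $\int\nu(y)v(x,y)\,dy$ rather than a trace at $y=0$, the subsolution property on the road hinges on the choice of the connected component $F$ of $\{\Re{V}>0\}$ whose closure meets the road on almost all of the lobe of $\su$, which is obtained from $\vp_2(0)=O(\sqrt{c_*^L-c})$ and equicontinuity; you correctly identified this issue. Second, the case $D\le 2d$ is not a degenerate instance of the same argument in the paper but a separate explicit construction (a KPP lobe in $x$ multiplied by a Dirichlet eigenfunction in $y$ placed at a height $M$ where $\nu\le\a$, with vanishing $u$-component), so it should not be dismissed as a corollary of the complex-root scheme.
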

 As in the previous section, we will study separately the case $D>2d$, which is the most interesting, and the case $D\leq 2d$.

\subsection{Construction of subsolutions: $D>2d$}
In order to keep the notation as light as possible, we will use the notation $\ft:=f'(0)-\d$ and $\Pt(\la):=-d\la^2+c\la-\ft$, because
all the results will perturb for small $\d>0$. We just have to keep in mind that $\ft<f'(0)$ and $\d\ll1$, hence
$\Pt(\la)>P(\la)$ and $\Pt(\la)-P(\la)\ll 1$.
\\
Our method is to devise a stationary solution of (\ref{pendelta}) not in $\R^2$ anymore, but in the horizontal strip
$\Omega^L = \R \times (-L,L)$, with $L>0$ large enough. Thus, we are solving 
\begin{equation}
 \label{stationnaire}
\begin{cases}
 -DU''+cU'=-\mub U+\int_{(-L,L)}\nu(y)V(x,y)dy & x\in \R \\
 -d\Delta V+c\partial_x V=\ft V+\mu(y)U(x)-\nu(y)V(x,y) & (x,y)\in \Omega^L \\
 V(x,L)=V(x,-L) = 0 & x\in \R.
\end{cases}
\end{equation}
In a similar fashion as in the previous section, we are looking for solutions of the form 
\begin{equation}
\label{soussolexp} 
\begin{pmatrix}
 U(x) \\
V(x,y)
\end{pmatrix}
= e^{\la x} \begin{pmatrix}
                  1 \\ \vp(y)
                 \end{pmatrix},
\end{equation}
where $\vp$ belongs to $H_0^1(-L,L)$. The system on $(\la,\vp)$ reads: 
\begin{equation}
\label{eqsoussol}
\begin{cases}
 -D\la^2+\la c+\mub  = \int_{(-L,L)} \nu(y)\phi(y)dy \\
-d\vp''(y)+(\Pt(\la)+\nu(y))\vp(y)  =  \mu(y) & \vp(-L) = \vp(L)=0.
\end{cases}
\end{equation}
The first equation of (\ref{eqsoussol})  gives a function $\la \mapsto \Psi_1(\la;c)=-D\la^2+\la c+\mub$.
The second equation of (\ref{eqsoussol}) gives a unique solution $\vp=\vp(y;\la,c;L)\in H^1_0(-L,L)$. We associate 
this unique solution with the function $\Psi_2^L(\la;c)=\int_{(-L,L)}\nu(y)\vp(y)dy$.
A solution of the form (\ref{soussolexp}) exists if and only if $\Psi_1(\la;c)=\Psi_2^L(\la;c)$ for some $\la,c$, that is to 
say if and only if $\Gamma_1$ and $\Gamma_2^L$ intersect (with straightforward notations). In this section, the game is to make them
intersect not with real but with complex $\la$.

\paragraph{Study of $\Gamma_1$} 
The function $\la\mapsto \Psi_1$ is exactly the same as in the search for supersolutions. In particular, it does not 
depend in $L$. Thus, the curve $\Gamma_1$ is the same as in the previous section: it is a parabola, symmetric with 
respect to the line $\{\la=\frac{c}{2D}\}$. Notice that being a parabola, its curvature is positive at any point ; it will be important
later.

\paragraph{Study of $\Gamma_2^L$}
The study of $\Gamma_2^L$ is quite similar to that of $\Gamma_2$. It amounts to studying the solutions of
\begin{equation}
\label{eqsoussol2}
\begin{cases}
 -d\vp''(y)+(\Pt(\la)+\nu(y))\vp(y)  =  \mu(y)  & y\in (-L,L)\\
 \vp \in H^1_0(-L,L). 
\end{cases}
\end{equation}
For real $\la$, (\ref{eqsoussol2}) admits solution for $\la \in [\la_{2,\d}^-,\la_{2,\d}^+]$, with
$\la_{2,\d}^\pm = \frac{c\pm\sqrt{c^2-4d\ft}}{2d}$. We may notice that $\la_{2,\d}^-<\la_2^-,\ \la_{2,\d}^+>\la_2^+$, and of course
$\la_{2,\d}^\pm \longrightarrow \la_2^\pm$ as $\d\to 0$. 
With a simple study of (\ref{eqsoussol2}) and using what we proved in proposition (\ref{sourire}), we can assert :
\begin{equation}\label{convergencegamma}
 \underset{L\to\infty}{\lim} \underset{\d\to 0}{\lim} \Psi_2^L(\la;c) = \underset{\d\to 0}{\lim} \underset{L\to\infty}{\lim} \Psi_2^L(\la;c)
 =\Psi(\la;c),
\end{equation}
and this uniformly on every compact set in $]\la^-_2,\la^+_2[\times ]2\sqrt{df'(0)},+\infty[.$

As a consequence, the picture is analogous to the case described in fig. (\ref{Dsup2d}): there exists a unique 
$c_*^L$ (which depends on $\d$) such that $\Gamma_2^L$ intersects $\Gamma_1$ twice if $c>c_*^L,$ close to $c_*^L$, 
once if $c=c_*^L$, and never if $c<c_*^L$ (for real $\la$).\\
 Moreover, since $\Gamma_2^L$ is below $\Gamma_2$, we have $c_{KPP}<c_*^L<c_*$. We also have
$c_*^L\longrightarrow c_*$ as $L\to\infty,\ \d\to 0$.

\paragraph{Complex solutions}
We use the same argument as in ~\cite{BRR1}.
Let us call $\b$ the ordinate of the plane $(\la,\Psi_{1,2}^L)$. For $c=c_*^L$, call $(\la^L_*,\b^L_*)$ the tangent point 
between $\Gamma_1$ et $\Gamma_2^L$. The functions $\Psi_1$ and $\Psi_2^L$ are both analytical in $\la$ at this point, and 
$\frac{d}{d\la}\Psi_1(\la),\quad\frac{d}{d\la}\Psi_2^L(\la)\neq0,$ for $(c,\la)$ in a neighbourhood of $(c_*^L,\la^L_*)$.
Due to the implicit function theorem, there exist $\la_1(c,\b),\ \la_2^L(c,\b)$ defined in a neighbourhood $V_1$ of $(c_*^L,\b^L_*)$,
analytical in $\b$, such that
\begin{equation}
\label{implicite} 
\begin{cases}
 \Psi_1(\la_1(c,\b);c)=\b & \forall (c,\b)\in V_1 \\
 \Psi_2^L(\la_2^L(c,\b);c)=\b & \forall (c,\b)\in V_1.
\end{cases}
\end{equation}

Then, set 
$$
h^L(c,\b)=\la_2^L(c,\b)-\la_1(c,\b),\qquad \textrm{for} \ (c,\b)\in V_1,
$$
and we get: 
\begin{equation}
\label{derivees}
\begin{cases}
\partial_\b h^L(c_*^L,\b_*^L)=0. \\
\partial_{\b\b} h^L(c_*^L,\b_*^L):=2a>0. \\
\partial_c h^L(c_*^L,\b_*^L):=-e<0. 
\end{cases}
\end{equation}
The first point is obvious. The second comes from the fact that $\Gamma_2^L$ is concave and $\Gamma_1$ has a positive curvature at any
point. The third is obvious given the first equation of (\ref{eqsoussol}).


Now, because we are working in a vicinity of $(c_*^L,\b_*^L)$, set :
$$
\xi:=c_*^L-c,\qquad \tau=\b-\b_*^L.
$$
Call $b:=\partial_{c\b}h^L(c_*^L,\b_*^L)$.
From (\ref{implicite}) and (\ref{derivees}), we can assert that there exists a neighbourhood $V_2\subseteq V_1$ of $(c_*^L,\b_*^L)$,
there exists $\eta=\eta(\tau,\xi)$ analytical in $\tau$ in $V_2-(c_*^L,\b_*^L)$, vanishing at $(0,0)$ like $|\tau|^3+\xi^2$, such that
\begin{equation}
 \label{analytical}
(h^L(c,\b)=0,\ (c,\b)\in V_2)\Leftrightarrow (a\tau^2+b\xi\tau+e\xi=\eta(\tau,\xi)).
\end{equation}
Recall that $a$ and $e$ are positive, so the discriminant $\Delta=(b\xi)^2-4ae\xi$ is negative for $\xi>0$ small enough. 
The trinomial $a\tau^2+b\xi\tau+e\xi$ has two roots $\tau_\pm=\frac{-b\xi\pm i\sqrt{4ea\xi-(b\xi)^2}}{2a}$.
Then, from an adaptation of Rouch\'e's theorem (see ~\cite{BRR1}), the right handside of (\ref{analytical}) has two roots,
still called $\tau_\pm$, satisfying $\tau_\pm=\pm i\sqrt{(e/a)\xi}+O(\xi)$.
Reverting to the full notation, we can see that for $c$ strictly less than and close enough to $c_*^L$, there exist
$\b,\la \in \mathbb{C},\ \vp \in H^1_0((-L,L),\mathbb{C})$ satisfying (\ref{eqsoussol}). Since
$\b=\Psi_1(\la)=-D\la^2+c\la+\mub$ and $\b$ has nonzero imaginary part, $\la$ has also nonzero imaginary part.
We can therefore write $(\la,\b)=(\la_1+i\la_2,\b_1+i\b_2)$ and: 
\begin{equation*}
 \begin{pmatrix}
  U \\ V
 \end{pmatrix}
=e^{(\la_1+i\la_2)x}
\begin{pmatrix}
 1 \\ \vp_1(y)+i\vp_2(y)
\end{pmatrix}
\end{equation*}
with
\begin{equation*}
 \begin{cases}
  \la_2,\b_2 \neq 0 \\
  \int\nu(y)\vp_1(y)dy=\b_1=\b_*^L+O(c_*^L-c) \\
  \int\nu(y)\vp_2(y)dy=\b_2=O(\sqrt{c_*^L-c}).
 \end{cases}
\end{equation*}
Thus :
\begin{itemize}
 \item $\Re U >0$ on $(-\frac{\pi}{2\la_2},\frac{\pi}{2\la_2})$ and vanishes at the ends ;
 \item $\Re V>0 \Leftrightarrow \vp_1\cos(\la_2x)>\vp_2\sin(\la_2x)$.
\end{itemize}
The set where $\Re V >0$ is periodic of period $\frac{2\pi}{\la_2}$ in the direction of the road.
Its connected components intersecting the strip $\R\times (-L,L)$ are bounded. The function
$\vp_2$ is continuous in $c$, hence the functions $y\mapsto\vp(y;c)$ are uniformly equicontinuous for $c$ near $c_*^L$.
Since $\nu(0)>0$ and $\int \nu\vp_2=O(\sqrt{c_*^L-c})$, we have $\vp_2(0)=O(\sqrt{c_*^L-c})$, and we can make
one of the connected components of $\{\Re V >0\}$, denoted by $F$, satisfy the property that
$\{(x,0)\in \overline{F}\}$ is arbitrary close to $[-\frac{\pi}{2\la_2},\frac{\pi}{2\la_2}]$.
We can now define the following functions: 
\begin{equation}
 \begin{array}{lcl}
  \underline{u}(x) & := & \begin{cases}
                     \max (\Re{U(x)},0) & \textrm{ if } |x|\leq \frac{\pi}{2\la_2} \\
		     0 & \textrm{ otherwise }
                    \end{cases}
\\
\underline{v}(x,y) & := & \begin{cases}
                     \max (\Re{V(x,y)},0) & \textrm{ if } (x,y)\in \overline{F} \\
		     0 & \textrm{ otherwise }.
                    \end{cases}
 \end{array}
\end{equation}
The choice of $F$ implies that $(\underline{u},\underline{v})$ is a subsolution of (\ref{pendelta}).

\subsection{Subsolution: case $D\leq 2d$}
Now assume that $0\leq D \leq 2d$. In the previous section, we define $c_*(D)=c_{KPP}=2\sqrt{df'(0)}$.
Let $c\leq c_{KPP}$. Thus, $4df'(0)-c^2>0$. Let $\d$ be such that $0<2\d<\frac{4df'(0)-c^2}{4d}=f'(0)-\frac{c^2}{4d}$.
With $\omega=\frac{\sqrt{4d(f'(0)-2\d)-c^2}}{2d}$, we define 
$$
\phi(x)=e^{\frac{c}{2d}x}cos(\omega x)\mathbf{1}_{(-\frac{\pi}{2\omega},\frac{\pi}{2\omega})}.
$$
The function $\phi$ is continuous and satisfies 
$$
-d\phi''+c\phi=(f'(0)-2\d)\phi \qquad \textrm{on} \  (-\frac{\pi}{2\omega},\frac{\pi}{2\omega}).
$$
Then, let us choose $R>0$ such that the first eigenvalue of $-\partial_{yy}$ in $(-R,R)$ is equal to $\frac{\d}{d}-\a$, and 
$\psi_R$ an associated nonnegative eigenfunction in $H_0^1(-R,R)$, where $0<\a<\d$. The function $\psi_R$ satisfies 
$$
-d\psi_R''=(\d-\a)\psi_R \ \textrm{in}\ (-R,R),\ \psi_R(y)>0,\ \forall |y|<R,\ \psi_R(R)=\psi_R(-R)=0.
$$
We extend $\psi_R$ by 0 outside $(-R,R)$. Let $M>0$ such that $\forall |y|>M-R,\ \nu(y)\leq \a,$ which is possible since $\nu(y)\to0$
with $y\to\pm\infty.$
The function 
$$
\underline{V}(x,y):=\phi(x)\psi_R(|y|-M)
$$
is a solution of
$$
\begin{cases}
-d\Delta V+c\partial_x V=(f'(0)-\d)V-\a V \\
x\in (-\frac{\pi}{2\omega},\frac{\pi}{2\omega}), \ |y|\in(M-R,M+R),
\end{cases}
$$
vanishing on the boundary. Hence, from the choice of $M$ and $\a,$ $(0,\underline{V})$ is a nonnegative compactly supported
subsolution of (\ref{pendelta}), non identically equal to $(0,0)$ ; which concludes the proof of Proposition \ref{soussol}. The proof 
of the main Theorem \ref{spreadingthm} follows as in ~\cite{BRR1}.

\section{The intermediate model (\ref{RPSL})}

\paragraph{Formal derivation of the semi-limit model}
Starting from the full model (\ref{RPeq}), we consider normal (i.e. integral) exchange from the field to the road but localised exchange from 
the road to the field. Formally, we define $\mu_\e = \frac{1}{\e}\mu(\frac{y}{\e})$ and take the limit with $\e\to 0$ of the system (\ref{RPepsSL}) :
\begin{equation}
 \label{RPepsSL}
\begin{cases}
 \partial_t u-D \partial_{xx} u = -\mub u+\int \nu(y)v(t,x,y)dy & x \in \R,\ t>0 \\
 \partial_t v-d\Delta v = f(v) +\mu_\e(y)u(t,x)-\nu(y)v(t,x,y) & (x,y)\in \R^2,\ t>0.
\end{cases}
\end{equation}
There is no influence in the first equation (the dynamic on the road), which is the same in the limit system. Though
the second equation in (\ref{RPepsSL}) tends to
\begin{equation*}
 \partial_t v-d\Delta v = f(v)-\nu(y)v(t,x,y),\qquad (x,y)\in \R\times\R\diagdown\{0\},\ t>0.
\end{equation*}
It remains to determine the limit condition between at the road. We may assume that for $\e=0$ $v$ is still continuous
at $y=0$. Now set $\xi=y/\e$ and $\vt(t,x,\xi):=v(t,x,y)$. The second equation in (\ref{RPepsSL}) becomes in the $(t,x,\xi)$-variables
$$
\e^2\left(\partial_t \vt-d\partial_{xx}\vt-f(\vt)+\nu(\xi)\vt(t,x,\xi)\right)-d\partial_{\xi\xi}\vt = \e\mu(\xi)u(t,x).
$$
Passing to the limit, it yields, in the $y$-variable: 
$$
-d\left(\partial_y v(t,x,0^+)-\partial_y v(t,x,0^-) \right) = \mub u(t,x).
$$
Consequently, the formal limit system of (\ref{RPepsSL}) should be (\ref{RPSL}) presented in the Introduction,
which is the system we will study from now. Our assumptions on $\nu$ and $f$ are the same as above.
The investigation is similar to the one done for the model (\ref{RPeq}), and we will only
develop the parts which differ.

\paragraph{Comparison principle}
Throughout this section, we will call a supersolution of (\ref{RPSL}) a couple $(\us,\vs)$ satisfying, in the classical sense, the following system: 
\begin{equation}
\label{RPSLsuper}
 \begin{cases}
  \partial_t \us-D \partial_{xx} \us \geq v(x,0,t)-\mub u +\nu(y)v(t,x,y) & x\in\R,\ t>0\\
\partial_t v-d\Delta v\geq f(v) -\nu(y)v(t,x,y) & (x,y)\in\R\times\R^*,\ t>0\\
v(t,x,0^+)=v(t,x,0^-), & x\in\R,\ t>0 \\
-d\left\{ \partial_y v(t,x,0^+)-\partial_y v(t,x,0^-) \right\}\geq \mub u(t,x) & x\in\R,\ t>0,
 \end{cases}
\end{equation}
which is also continuous up to time $0.$
Similarly, we will call a subsolution of (\ref{RPSL}) a couple $(\su,\sv)$ satisfying (\ref{RPSLsuper}) with the inverse inequalities 
(\textit{i.e.} the $\geq$ signs replaced by $\leq$. We now need a comparison principle in order to get monotonicity for solutions :

\begin{prop}
 \label{comparaisonSL}
Let $(\su,\sv)$ and $(\us,\vs)$ be respectively a subsolution bounded from above and a supersolution
bounded from below of (\ref{RPSL}) satisfying $\su\leq\us$ and $\sv\leq\vs$ at $t=0$. Then, either $\su<\us$ and
$\sv<\vs$ for all $t>0$, or there exists $T>0$ such that $(\su,\sv)=(\us,\vs),\ \forall t\leq T.$
\end{prop}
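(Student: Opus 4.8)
The plan is to follow the classical strategy for cooperative parabolic systems, the only genuinely new feature being the transmission condition at $y=0$; note that the coupling is cooperative in both directions, since the field feeds the road through the nonnegative kernel $\nu$, and the road feeds the field through the nonnegative flux $\mub u$. First I would set $w:=\us-\su$ and $z:=\vs-\sv$, which are $\geq0$ at $t=0$, and subtract the subsolution from the supersolution inequalities. Writing $f(\vs)-f(\sv)=c(t,x,y)\,z$ with $c$ bounded (as $f\in C^1$), the pair $(w,z)$ satisfies the coupled differential inequalities
\begin{align*}
\partial_t w-D\partial_{xx}w+\mub w &\geq \int\nu(y)\,z(t,x,y)\,dy, \\
\partial_t z-d\Delta z+(\nu(y)-c)\,z &\geq 0 \quad \text{in } \R\times\R^*,
\end{align*}
together with the continuity $z(t,x,0^+)=z(t,x,0^-)$ and the transmission inequality $-d\{\partial_y z(0^+)-\partial_y z(0^-)\}\geq \mub w$. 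A preliminary exponential shift $w\mapsto e^{-Kt}w$, $z\mapsto e^{-Kt}z$ with $K>\|c\|_\infty$ turns the zeroth-order coefficients into nonnegative ones, so the scalar strong maximum principle applies to each equation separately.

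The heart of the matter is to show that $w,z\geq0$ is preserved, after which the strong maximum principle yields the stated dichotomy. I would argue by examining a hypothetical first contact, distinguishing three locations. (i) If $z$ first reaches $0$ at an interior point $(t_0,x_0,y_0)$ with $y_0\neq0$, then at this space-time minimum $\partial_t z\leq0$, $\Delta z\geq0$ and the zeroth-order term vanishes, so the field inequality becomes an equality and the strong parabolic maximum principle forces $z\equiv0$ on the connected component. (ii) If $w$ first vanishes at $(t_0,x_0)$, then the left-hand side of the road inequality is $\leq0$ there while the right-hand side $\int\nu z$ is $\geq0$; hence $\int\nu(y)z(t_0,x_0,y)\,dy=0$, and since $\nu\geq0$ with $\nu(0)>0$ and $z$ continuous, $z(t_0,x_0,0)=0$, reducing to case (iii). (iii) If $z$ first vanishes on the line $y=0$, then $y=0$ is a minimum of $z(t_0,x_0,\cdot)$, so $\partial_y z(0^+)\geq0\geq\partial_y z(0^-)$; the transmission inequality then gives $0\geq-d\{\partial_y z(0^+)-\partial_y z(0^-)\}\geq \mub w\geq0$, forcing $\partial_y z(0^+)=\partial_y z(0^-)=0$ and $\mub w(t_0,x_0)=0$. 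Applying the Hopf lemma separately on $\{y>0\}$ and $\{y<0\}$, a nontrivial $z$ would give a strictly signed one-sided derivative at $y=0$, contradicting the vanishing just obtained; hence $z\equiv0$ near the interface.

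Once nonnegativity is established, the dichotomy follows from the strong maximum principle: if $(\su,\sv)$ and $(\us,\vs)$ are not strictly ordered for all $t>0$, there is a first contact time $t_0$ and a point where $w$ or $z$ vanishes; propagating the contact backward in time (and across the interface by the argument above) shows $w\equiv z\equiv0$ on $[0,t_0]$, which supplies the time $T$ in the statement. The hard part will be case (iii): correctly combining the sign of the jump in the normal derivative dictated by the transmission condition with a one-sided Hopf lemma, so as to exclude a contact at the interface using only the continuity of $z$ and the transmission relation, without assuming extra regularity of $z$ across $y=0$. The remainder is a routine adaptation of the proof of Proposition \ref{comparaison}, which was omitted as classical.
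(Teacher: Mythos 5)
The paper itself contains no proof of this proposition (it is stated and then ``We omit the proof'', exactly as for Proposition~\ref{comparaison}), so you can only be judged against the classical argument the omission points to --- and your skeleton is indeed that argument. In particular you correctly isolated the only non-classical feature: the cooperative structure (the road is fed by $\int\nu(y)z\,dy\geq 0$, the field by the flux $\mub w\geq 0$ through the transmission condition), and your case (iii) is handled the right way: at an interface minimum the one-sided derivatives satisfy $\partial_y z(0^+)\geq 0\geq\partial_y z(0^-)$, the transmission inequality then forces $\partial_y z(0^\pm)=0$ and $\mub w=0$, and the one-sided parabolic Hopf lemma on each half-plane upgrades this to $z\equiv 0$ (hence $w\equiv 0$) in the past. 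Cases (i) and (ii) are also fine.

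The genuine gap is the step you treat as automatic: ``examining a hypothetical first contact''. The spatial domain is unbounded ($\R$ for the road, $\R\times\R^*$ for the field), so at a given time the infimum of $w$ or $z$ need not be attained; positivity can be lost ``at infinity'' without there ever existing a contact point at which your three cases could be launched. This is precisely why the statement assumes the subsolution bounded from above and the supersolution bounded from below --- hypotheses your proof never invokes. The fix is a Phragm\'en--Lindel\"of type penalization, and here it is not routine, because any function you add to $(w,z)$ must itself satisfy the transmission inequality: the field penalization needs a downward kink at $y=0$ (e.g.\ a term like $(1+x^2)\bigl(1+e^{-\gamma|y|}\bigr)$ with $2d\gamma\geq\mub$), its growth in $y$ must be integrable against $\nu$ (this uses the first-moment bound that follows from (\ref{nucond})), and the exponential shift constant must dominate not only the Lipschitz constant of $f$ but also the coupling mass $\nub$, since at a negative minimum of the road component one compares $(\mub+K)w$ against $\int\nu z\geq -\nub\,\|\min(z,0)\|_\infty$ --- your choice $K>\|c\|_\infty$ alone does not close that case. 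Note also that the translation-compactness device the paper uses for contact points at infinity in Proposition~\ref{stationary2} (its Cases 3--5) is \emph{not} available here: sub- and supersolutions are merely functions satisfying inequalities and carry no uniform estimates allowing extraction of limits. Once nonnegativity is secured by the penalization, the dichotomy itself needs no ``first'' contact: any point where $w$ or $z$ vanishes is attained by definition, and your cases (i)--(iii) applied there give equality on the whole time interval below it, which is the asserted alternative.
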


We omit the proof.

\paragraph{Long time behaviour and stationary solutions}
We want to show that any (nonnegative) solution of (\ref{RPSL}) converges locally uniformly to
a unique stationary solution $(U_s,V_s)$, which is bounded, positive, $x$-independent, and of course is solution of the 
stationary system of equations (\ref{stateqSL}): 
\begin{equation}
\label{stateqSL}
\begin{cases}
-D U''(x)  = -\mub U +\int\nu(y)V(x,y) \\
d\Delta V(x,y)  = f(V) -\nu(y)V(x,y) \\
V(x,0^+) = V(x,0^-) \\
-d\left\{ \partial_y V(x,0^+)-\partial_y V(x,0^-) \right\}=\mub U(x).
\end{cases}
\end{equation}
Proofs of Propositions \ref{stationary1} and \ref{stationary2} can be easily adapted to this new system. The only nontrivial point lies
in the existence of an $L^\infty$ a priori estimate. 
Set $\la=\frac{\nub}{d}$. From conditions on the reaction term, there
 exists $M_1$ such that $\forall s>M_1,\ f(s)<-\frac{\nub^2}{d}s.$ Now, set 
 $$
 M=\max (M_1,\frac{\nub}{\mub}\|u_0\|_\infty,\|v_0\|_\infty)
 $$
 and the couple $(\overline{U},\overline{V})$ given by
 $$
 \overline{V}(y)=M(1+e^{-\la|y|}),\ \overline{U}=\frac{1}{\mub}\int_\R \nu(y)\overline{V}(y)dy
 $$
 is a supersolution of (\ref{stateqSL}) which is above $(u_0,v_0)$.
 
 The proof of the corresponding Proposition \ref{liouville} follows easily.

\paragraph{Exponential solutions, spreading}
We are looking for solutions of the linearised system: 
\begin{equation}
\label{RPSLli}
\begin{cases}
\partial_t u-D \partial_{xx} u= v(x,0,t)-\mub u +\nu(y)v(t,x,y) & x\in\R,\ t>0\\
\partial_t v-d\Delta v=f'(0)v -\nu(y)v(t,x,y) & (x,y)\in\R\times\R^*,\ t>0\\
v(t,x,0^+)=v(t,x,0^-), & x\in\R,\ t>0 \\
-d\left\{ \partial_y v(t,x,0^+)-\partial_y v(t,x,0^-) \right\}=\mub u(t,x) & x\in\R,\ t>0,
\end{cases}
\end{equation}

and these solutions will be looked for under the form
\begin{equation}
\label{solexpSL} 
\begin{pmatrix}
 u(t,x) \\
v_1(t,x,y) \\
v_2(t,x,y)
\end{pmatrix}
= e^{-\la(x-ct)} \begin{pmatrix}
                  1 \\ \phi_1(y) \\ \phi_2(y)
                 \end{pmatrix}
\end{equation}
where $\la,c$ are positive constants and $\phi$ is a nonnegative function in $H^1(\R)$,
with $v=v_1,\ \phi=\phi_1$ for $y\geq 0$ and $v=v_2, \phi=\phi_2$ for $y\leq0$. The system in $(\la,\phi)$ reads 
\begin{equation}
\label{laphi}
\begin{cases}
 -D\la^2+\la c+\mub  = \int \nu(y)\phi(y)dy \\
-d\phi_1''(y)+(\la c-d\la^2-f'(0)+\nu(y))\phi_1(y)  =  0 & y\geq 0. \\
-d\phi_2''(y)+(\la c-d\la^2-f'(0)+\nu(y))\phi_2(y)  =  0 & y\leq 0. \\
\phi_1(0)=\phi_2(0) & i.e. \ \phi \textrm{ is continuous.} \\
-\phi'_1(0)+\phi'_2(0) = \frac{\mub}{d}.
\end{cases}
\end{equation}

The study is exactly the same as in the third section. The only point which deserves some explanation is 
the well-posedness of (\ref{laphi}).
For $M>0$ let us consider $\vp_M$ the unique solution of
\begin{equation}
 \label{eqM}
 \begin{cases}
  -d\vp_M''(y)+(P(\la)+\nu(y))\vp_M(y) = 0 & y\in]0,+\infty[ \\
  \vp_M(0)=M & \vp_M\in H^1(\R^+).
 \end{cases}
\end{equation}
Let us show the following lemma, which will prove the well-posedness of (\ref{laphi}):
\begin{lem}
\label{wellposed}
 \begin{enumerate}
 \item $M\mapsto \vp_M'(0)$ is decreasing ;
 \item $ \vp_M'(0) \underset{M\to 0}{\longrightarrow}0$ ;
 \item $ \vp_M'(0) \underset{M\to +\infty}{\longrightarrow}-\infty$.
 \end{enumerate}
\end{lem}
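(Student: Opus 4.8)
The plan is to exploit the fact that (\ref{eqM}) is \emph{linear and homogeneous}, the sole datum being the Dirichlet value $M$ at the origin. Since $P(\la)+\nu(y)>0$ for every $y$ (recall $P(\la)>0$ on $]\la_2^-,\la_2^+[$ and $\nu\geq0$), the operator $-d\frac{d^2}{dy^2}+(P(\la)+\nu)$ is coercive, so (\ref{eqM}) has a unique solution in $H^1(\R^+)$ for each $M$. By uniqueness and superposition this solution is simply $\vp_M=M\vp_1$, where $\vp_1$ denotes the solution corresponding to $\vp_1(0)=1$. Consequently $\vp_M'(0)=M\,\vp_1'(0)$, and all three assertions of the lemma collapse to the single claim that $\vp_1'(0)<0$.

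To establish $\vp_1'(0)<0$ I would use an energy identity. Multiplying the equation for $\vp_1$ by $\vp_1$ and integrating over $(0,+\infty)$, an integration by parts gives
\begin{equation*}
 d\,\vp_1'(0) = -d\int_0^{+\infty}(\vp_1')^2\,dy-\int_0^{+\infty}\bigl(P(\la)+\nu(y)\bigr)\vp_1(y)^2\,dy.
\end{equation*}
The boundary contribution at $+\infty$ drops out because $\vp_1\in H^1(\R^+)$ forces $\vp_1\to0$ (Sobolev embedding), while the equation itself yields $\vp_1''\in L^2$, hence $\vp_1'\in H^1(\R^+)$ and $\vp_1'\to0$ as well. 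Both integrals on the right are nonnegative, and the second is strictly positive since $P(\la)>0$ and $\vp_1\not\equiv0$ (indeed $\vp_1(0)=1$). Therefore $\vp_1'(0)<0$.

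The three conclusions then follow at once: $M\mapsto\vp_M'(0)=M\vp_1'(0)$ is linear with negative slope, hence strictly decreasing; it tends to $0$ as $M\to0$; and it tends to $-\infty$ as $M\to+\infty$. The only genuinely delicate points are the selection of the decaying branch in $H^1(\R^+)$ (equivalently, the well-posedness giving $\vp_M=M\vp_1$) and the vanishing of the boundary terms at infinity; both are routine once one notes the exponential decay $\vp_1(y)\sim e^{-\sqrt{P(\la)/d}\,y}$ coming from $\nu(y)\to0$. Everything else being immediate from linearity, I expect no substantial obstacle here.
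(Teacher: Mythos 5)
Your proof is correct, but it follows a genuinely different route from the paper's. You exploit the fact that (\ref{eqM}) is linear and homogeneous with $M$ as the only datum, so that by uniqueness $\vp_M=M\vp_1$ and the whole lemma collapses to the single inequality $\vp_1'(0)<0$, which you obtain from the energy identity
\begin{equation*}
d\,\vp_1'(0)=-d\int_0^{+\infty}(\vp_1')^2\,dy-\int_0^{+\infty}\bigl(P(\la)+\nu(y)\bigr)\vp_1(y)^2\,dy<0,
\end{equation*}
with a sound justification of the vanishing boundary term at infinity ($\vp_1\in H^1(\R^+)$, and $\vp_1''\in L^2$ from the equation since $\nu$ is bounded). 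The paper instead treats the three items separately: monotonicity comes from the elliptic maximum principle (giving $\vp_{M_1}<\vp_{M_2}$ for $M_1<M_2$) followed by Hopf's lemma at $y=0$; the two limits come from integrating the equation to write $\vp_M'(0)=-\frac{1}{d}\int_0^\infty\bigl(P(\la)+\nu(y)\bigr)\vp_M(y)\,dy$ and then squeezing $\vp_M$ between the explicit exponential subsolution $M e^{-\sqrt{(P(\la)+\|\nu\|_\infty)/d}\,y}$ and supersolution $M e^{-\sqrt{P(\la)/d}\,y}$. Your argument is shorter, avoids Hopf's lemma and barrier constructions, and yields the stronger conclusion that $\vp_M'(0)$ is exactly linear in $M$. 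What the paper's comparison method buys in exchange is explicit two-sided quantitative bounds on $\vp_M'(0)$, which is how its proof gets the convergence in item 2 uniformly in $\la$ (a refinement not claimed in the lemma's statement but recorded in its proof), and a technique that survives when linearity in the data fails --- e.g. for the inhomogeneous analogues with source term $\mu$ treated elsewhere in the paper.
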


\begin{proof}
 Let us consider $M_1,M_2$ with $0<M_1<M_2$, $\vp_{M_1},\vp_{M_2}$ the associated solutions of (\ref{eqM}). The elliptic maximum principle 
 yields $0<\vp_{M_1}(y)<\vp_{M_2}(y),\ \forall y\geq0$ and Hopf's lemma gives $0>\vp_{M_1}'(0)>\vp_{M_2}'(0),$ which proves the first point.
 
 Then, if we integrate (\ref{eqM}) we get 
 $$
 \vp_M'(0)=-\frac{1}{d}\int_0^\infty (P(\la)+\nu(y))\vp_M(y)dy.
 $$
 Let us now consider $\vps_M$ the (unique) solution of 
 \begin{equation*}
   \begin{cases}
  -d\vps_M''(y)+P(\la)\vps_M(y) = 0 & y\in]0,+\infty[ \\
  \vps_M(0)=M & \vps_M\in H^1(\R^+).
 \end{cases}
 \end{equation*}
$\vps_M$ is a supersolution of (\ref{eqM}). Thus, $\vps_M(y)\geq\vp_M(y),\ \forall y\geq0.$ Moreover we have an explicit expression for $\vps_M$: 
$\vps_M(y)=M\exp(-\sqrt{\frac{P(\la)}{d}}y)$. Hence,
$$
0\leq-\vp_M'(0)\leq\frac{M}{d}\int_0^\infty(P(\la)+\nu(y))e^{-\sqrt{\frac{P(\la)}{d}}y}dy
$$
and 
$$
-\vp_M'(0) \underset{M\to0}{\longrightarrow}0
$$
uniformly in $\la$, which proves the second point.

In the same way, the unique solution $\svp$ of
 \begin{equation*}
   \begin{cases}
  -d\svp_M''(y)+(P(\la)+\|\nu\|_\infty)\svp_M(y) = 0 & y\in]0,+\infty[ \\
  \svp_M(0)=M & \svp_M\in H^1(\R^+).
 \end{cases}
 \end{equation*}
is a subsolution of (\ref{eqM}), and $\svp_M(y)\leq\vp_M(y),\ \forall y\geq0.$ Hence,

\begin{equation*}
 \begin{array}{lcl}
-\vp_M'(0) & \geq & \frac{1}{d}\int_0^\infty (P(\la)+\nu(y))\svp_M(y)dy \\
 & \geq & \frac{M}{d}\int_0^\infty (P(\la)+\nu(y))e^{-\sqrt{\frac{P(\la)+\|\nu\|_\infty}{d}}y}du \\
-\vp_M'(0) & \to & +\infty \textrm{ as }M\to +\infty,
 \end{array}
\end{equation*}
which concludes the proof of Lemma \ref{wellposed}.
 \end{proof}
 
 The corresponding Proposition \ref{spreading_speed} and Theorem \ref{spreadingthm} follows as in the previous part.

 \section{The large diffusion limit $D\to+\infty$}
The behaviour of the spreading speed $c^*$ as $D$ goes to $+\infty$ has already been investigated in \cite{BRR1}
for the initial model (\ref{BRReq2}). It has been shown that there exists $c_\infty>0$ such that 
$$
\frac{c^*(D)}{\sqrt{D}}\underset{D\to+\infty}{\longrightarrow}c_\infty.
$$
In the following Proposition, we show the robustness of this result and extend it to the general cases (\ref{BRReq2})-(\ref{RPSL2}).
We also give an asymptotic behaviour as $\fp$ tends to $+\infty.$

\begin{prop}\label{asymptoticDfp}
Let us consider any of the systems (\ref{BRReq2})-(\ref{RPSL2}) with fixed parameters $d,\nub,\mub.$ Let $c^*(D,\fp)$
be the associated spreading speed given by Theorem \ref{spreadingthm}.
\begin{enumerate}
 \item There exists $c_\infty,$ $\displaystyle \frac{c^*(D,\fp)}{\sqrt{D}}\underset{D\to+\infty}{\longrightarrow}c_\infty.$
 \item $c_\infty$ satisfies $\displaystyle c_\infty\underset{\fp\to+\infty}{\sim}\sqrt{\fp}.$
\end{enumerate}
 That is, with $D\to+\infty$ and $\fp\to+\infty,$ we have $c_0^*\sim\sqrt{\fp D},$ \textit{i. e.} half of the KPP spreading speed 
 for a reaction-diffusion on the road.
\end{prop}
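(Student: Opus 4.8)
The plan is to reduce Proposition \ref{asymptoticDfp} to the geometric picture of the intersection of $\Gamma_1$ and $\Gamma_2$ developed in Section 3, and then to extract the two asymptotics by tracking how the tangency point scales. Recall that for $D>2d$ the speed $c_*$ is characterized by the condition that $\Gamma_1$ and $\Gamma_2$ are tangent, and that the explicit reference computation (\ref{solPsi2BRR}) gives $\Psi_2^0(\la)=\frac{\nub\mub}{\nub+2\sqrt{dP(\la)}}$ with $P(\la)=-d\la^2+c\la-\fp$. The tangency condition amounts to the system $\Psi_1(\la)=\Psi_2(\la)$ together with $\Psi_1'(\la)=\Psi_2'(\la)$, which for each fixed $d,\nub,\mub$ determines $c_*$ and the contact abscissa $\la_*$ as functions of $D$ and $\fp$.

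First I would perform the parabolic rescaling suggested by the expected scaling $c_*\sim c_\infty\sqrt{D}$. Setting $c=\gamma\sqrt{D}$ and $\la=\sigma/\sqrt{D}$, the first equation of (\ref{eq1}) becomes $\Psi_1=-\sigma^2+\gamma\sigma+\mub$, which is $D$-independent, so $\Gamma_1$ converges to a fixed parabola in the $(\sigma,\Psi_1)$ variables. The point is then to show that in these rescaled variables $\Gamma_2$ also converges to a well-defined limiting curve as $D\to+\infty$. Since the relevant contact occurs near $\la_2^-\sim \fp/c = \sigma/\sqrt{D}\cdot(\cdots)$, one checks that $P(\la)=-d\la^2+c\la-\fp$ stays bounded along the rescaling, so $\Psi_2$ (or its explicit form $\Psi_2^0$) converges to a limiting profile depending only on $\gamma,\sigma$ and the fixed parameters. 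Passing to the limit in the tangency system yields a limiting tangency condition whose solution is $\gamma=c_\infty$, and a continuity/monotonicity argument (the tangency point depends analytically on $D$ by the implicit function theorem, as already used in the construction of complex solutions) gives the convergence $c_*(D,\fp)/\sqrt{D}\to c_\infty$. This establishes part (1) and simultaneously exhibits $c_\infty$ as the root of an explicit algebraic relation in $\fp$.

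For part (2) I would take the limiting tangency relation obtained above, now viewed as an equation for $c_\infty=c_\infty(\fp)$, and study its behaviour as $\fp\to+\infty$. Using the explicit form $\Psi_2^0(\la)=\frac{\nub\mub}{\nub+2\sqrt{dP(\la)}}$ in the reference case (\ref{BRReq2}), the tangency equations become algebraic in $\sigma$ and $\gamma$ after the rescaling, and I expect that for large $\fp$ the contact pushes toward the regime where $\sqrt{dP(\la)}$ dominates $\nub$, so that $\Psi_2^0$ is well approximated by $\frac{\nub\mub}{2\sqrt{dP(\la)}}$. Feeding this approximation into the two tangency conditions and balancing the leading powers of $\fp$ should produce $c_\infty\sim\sqrt{\fp}$; the combination with part (1) then gives $c_0^*\sim\sqrt{\fp D}$, which is exactly half of $c_{KPP}=2\sqrt{D\fp}$ with $D$ playing the role of the road diffusion. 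The extension to the semi-limit systems (\ref{RPSL}) and (\ref{RPSL2}) follows because, by the uniform convergence (\ref{convergencegamma}) and the fact that the corresponding $\Gamma_2$ share the same endpoint value $\mub$ and convexity, the same rescaling and tangency analysis applies verbatim.

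The main obstacle I anticipate is controlling the limit of $\Gamma_2$ under the rescaling in the general (non-explicit) case, i.e. for systems (\ref{RPSL}) and (\ref{RPSL2}) where $\Psi_2$ is defined only implicitly through the solution $\phi$ of (\ref{eqgeneralesurphi}). There one cannot simply take a limit in a closed formula; instead one must show that the rescaled solution $\phi(\cdot;\la,c)$ and the associated $\Psi_2$ converge, uniformly in the relevant compact range of $\sigma,\gamma$, to the same limiting profile as in the reference case. I would handle this by the supersolution/boxcar estimates of Lemma \ref{lemeps} together with the exponential bound $\psi(y;\e)\leq C_1 e^{-\sqrt{\e}|y|}+C_2 e^{-a|y|}$ already recorded there, which give the needed uniform control on $\int\nu\phi$ and its $\la$-derivatives as $P(\la)\to 0$; verifying that these estimates survive the $D\to+\infty$ rescaling, and that the tangency point stays in a region where they are effective, is the delicate technical heart of the argument.
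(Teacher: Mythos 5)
Your treatment of part (1) follows essentially the same route as the paper: the paper's spatial rescaling $x\mapsto\sqrt{D}\,x$ is exactly your substitution $c=\gamma\sqrt{D}$, $\la=\sigma/\sqrt{D}$; it turns $\Psi_1$ into the $D$-independent parabola $-\sigma^2+\gamma\sigma+\mub$, and the convergence of the rescaled $\Gamma_2$ to a limiting curve $\Psit_{2,\infty}$ is obtained there from the uniform $L^\infty$ bound on $\phi$ and its pointwise monotonicity in the quantity $\la c-\frac{d}{D}\la^2-\fp$, via the maximum principle. Your proposed control through the estimates of Lemma \ref{lemeps} is a workable variant of this, so part (1) is fine.

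The genuine gap is in part (2). You justify replacing $\Psi_2^0(\la)=\frac{\nub\mub}{\nub+2\sqrt{dP(\la)}}$ by $\frac{\nub\mub}{2\sqrt{dP(\la)}}$ on the grounds that for large $\fp$ the contact point is pushed into the regime where $\sqrt{dP(\la)}$ dominates $\nub$. This is false: at the contact point, $P$ stays \emph{bounded} as $\fp\to+\infty$. Indeed, in the rescaled variables the limit curve $\Psit_{2,\infty}$ takes values in $(0,\mub]$ on its domain $(\fp/c,+\infty)$, so whenever $\fp/c<c$ one has $\Psit_1>\mub\geq\Psit_{2,\infty}$ just to the right of $\fp/c$ while $\Psit_1\to-\infty$ at $+\infty$; the curves then intersect, which forces $c_\infty\leq\sqrt{\fp}$. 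Moreover the contact abscissa $\la^*$ lies between $\fp/c_\infty$ and $\la_1^+=\frac{c_\infty+\sqrt{c_\infty^2+4\mub}}{2}$, whence $P(\la^*)=c_\infty\la^*-\fp\leq c_\infty\la_1^+-\fp\leq c_\infty^2+\mub-\fp\leq\mub$, uniformly in $\fp$. (Working out the exact tangency system shows $\sqrt{P(\la^*)}$ actually converges to a finite positive constant determined by $d,\nub,\mub$.) So the denominator $\nub+2\sqrt{dP}$ is never asymptotically $2\sqrt{dP}$, and "balancing powers of $\fp$" in the approximated system is not a proof; as it happens the conclusion $c_\infty\sim\sqrt{\fp}$ is robust, but what must actually be proved is precisely the boundedness of $P(\la^*)$, which your ansatz implicitly denies.

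For comparison, the paper's own proof of part (2) is formula-free: it reads off from the picture the two-sided inequality (\ref{ineqcinfty}), $c_\infty\leq\fp/c_\infty\leq\frac{c_\infty+\sqrt{c_\infty^2+4\mub}}{2}$, and squaring the second inequality (its left side is $\geq\sqrt{\fp}>0$ by the first) gives $\fp/\sqrt{\fp+\mub}\leq c_\infty\leq\sqrt{\fp}$, hence the asymptotics. Because this uses only that $\Psit_{2,\infty}$ is positive, bounded by $\mub$, and tends to $\mub$ at the left endpoint $\fp/c$ of its domain, it covers all the systems (\ref{BRReq2})--(\ref{RPSL2}) at once. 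Your tangency computation, by contrast, needs the closed form of the reference case, and your appeal to (\ref{convergencegamma}) to extend it to the other systems is a miscitation: that statement concerns the strip truncation limit $L\to\infty$, $\d\to0$, not a comparison between the different exchange models.
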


\paragraph{Proof of Proposition \ref{asymptoticDfp}}
We prove the result for the nonlocal system (\ref{RPeq}), the other cases being similar. We set
$$
\ut(t,x)=u(t,\sqrt{D}x),\quad \vt(t,x,y) = v(t,\sqrt{D}x,y).
$$
The system in the rescaled variables becomes
\begin{equation}\label{systemDinfty}
 \begin{cases}
 \partial_t\ut-\partial_{xx}\ut = -\mub\ut+\int\nu(y)\vt(t,x,y)dy \\
 \partial_t\vt-d\lp\partial_{yy}\vt+\frac{1}{D}\partial_{xx}\vt\rp = f(\vt)+\mu(y)\ut(t,x)-\nu(y)\vt(t,x,y).
 \end{cases}
\end{equation}
The $(c,\la,\phi)-$system associated to (\ref{systemDinfty}) is then
\begin{equation*}
 \begin{cases}
\la c-\la^2+\mub = \int \nu\phi \\
-d\phi''(y)+\lp\la c-\frac{d}{D}\la^2-\fp+\nu(y)\rp\phi(y) = \mu(y).
 \end{cases}
\end{equation*}
Hence we get that $c^* = \sqrt{D}\ct$ where $\ct$ is the first $c$ such that the graphs of $\Psit_1$ and $\Psit_2$
intersect, where $\Psit_1$ and $\Psit_2$ are defined as follows:
$$
\Psit_1 : \la \longmapsto \la c -\la^2+\mub
$$
and
$$
\Psit_2 : \begin{cases}
           ]\tilde{\la}^-,\tilde{\la}^+[ & \longrightarrow \R \\
           \la & \longmapsto \int\nu\phi
          \end{cases}
$$
where $\phi$ is the unique $H^1$ solution of 
\begin{equation}\label{eqphirescalee}
 -d\phi''(y)+\lp\la c-\frac{d}{D}\la^2-\fp+\nu(y)\rp\phi(y) = \mu(y)
\end{equation}
and $\displaystyle\tilde{\la}^\pm = \frac{D}{2d}\lp c\pm\sqrt{c^2-4\frac{d\fp}{D}}\rp.$ We can see that
as $D$ tends to $+\infty,$ $\displaystyle\tilde{\la}^-=\frac{\fp}{c}+o(1)$ and $\displaystyle\tilde{\la}^+\to+\infty.$
Behaviours of $\Psit_{1,2}$ have already
been studied above. $\Psit_1$ is a concave parabola, $\Psit_2$ is strictly convex, symmetric with respect to 
$\{\la=\frac{cD}{2d}\}.$ Moreover, it has been showed that the solution $\phi$ of (\ref{eqphirescalee}) is
 bounded in $L^\infty,$ uniformly in $\la,c,D.$ It is also pointwise strictly decreasing in $\lp\la c-\frac{d}{D}\la^2-\fp\rp.$ 
Now, let $\vp$ be the $H^1$ solution of the limit system defined for $\la>\frac{\fp}{c}$
\begin{equation}\label{eqphiDinfty}
 -d\vp''(y)+\lp\la c-\fp+\nu(y)\rp\vp(y) = \mu(y).
\end{equation}
From the maximum principle and the monotonicity of $\phi$ with respect to the nonlinear eigenvalue, we can easily see 
that $\displaystyle \lV\vp-\phi\rV_{L^\infty}\to 0$ as $D\to\infty,$ locally uniformly in $\la,c.$ Hence, $\Psit_2$ tends to  
$\Psit_{2,\infty}$ defined by
$$
\Psit_2 : \begin{cases}
           ]\frac{\fp}{c},+\infty[ & \longrightarrow \R \\
           \la & \longmapsto \int\nu\vp
          \end{cases}
$$
where $\vp$ is the unique solution of (\ref{eqphiDinfty}), and $\ct$ tends to $c_\infty,$ where 
$c_\infty$ is the first $c$ such that the graphs of $\Psit_1$ and $\Psit_{2,\infty}$ intersect.
This concludes the proof of the first part of Propostion \ref{asymptoticDfp}.

\begin{figure}[!ht]
\centering
\begin{tikzpicture}[scale=4]
 \draw [->] (-0.1,0) -- (2.5,0);
 \draw [->] (0,-0.1) -- (0,1.4);
 \draw [blue,very thick,domain=0:1.48] plot (\x,{(0.8*\x-\x^2+1)});
 \draw [red,very thick,domain=1.25:2.4] plot(\x,{1/(1+2*sqrt(0.8*\x-1))});
 \draw (0.5,1.3) node {$\Psit_1$};
 \draw (2,0.2) node {$\Psit_{2,\infty}$};
 \draw (1.48,0) node[below right] {$\frac{c+\sqrt{c^2+4\mub}}{2}$};
 \draw [dashed] (0.8,0) -- (0.8,1);
 \draw (0.8,0) node[below] {$c$};
 \draw [dashed] (0,1) -- (1.25,1);
 \draw [dashed] (1.25,1) -- (1.25,0);
 \draw (1.25,0) node[below] {$\frac{\fp}{c}$};
 \draw (0,1) node[left] {$\mub$};
 \end{tikzpicture}
 \caption{\label{FigDinfini}Curves in the limit case $D\to\infty$}
\end{figure}
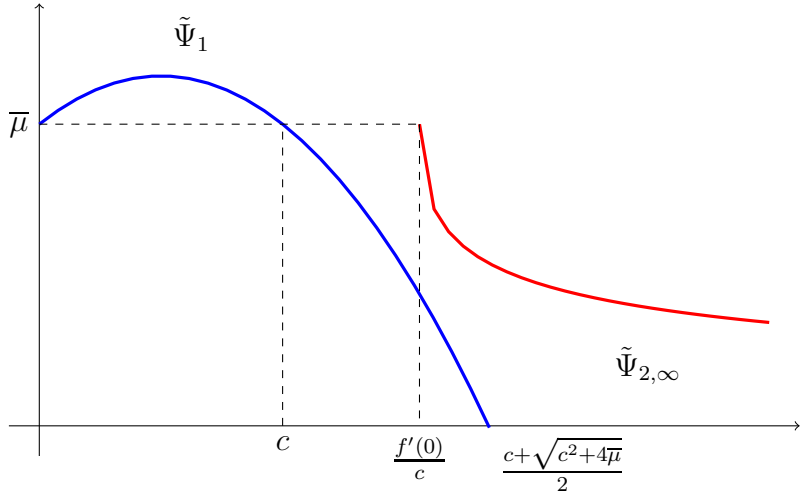

For the second part of Proposition \ref{asymptoticDfp}, we can see from geometric considerations (see figure \ref{FigDinfini})
that $c_\infty$ must satisfy 
\begin{equation}\label{ineqcinfty}
c_\infty \leq \frac{\fp}{c_\infty} \leq \frac{c_\infty+\sqrt{c_\infty^2+4\mub}}{2}.
\end{equation}
Passing to the limit $\fp\to+\infty$ in (\ref{ineqcinfty}) yields the expected result.
\qed

\section{Enhancement of the spreading speed in the semi-limit case (\ref{RPSL2})}
This section is devoted to the semi-limit model (\ref{RPSL2}) and the proof of Proposition \ref{vitessemaxRPSL2}.
For $\mub>0,$ let 
$$
\Lambda_{\mub} = \{\mu\in C_0(\R),\mu\geq 0, \int\mu=\mub,\mu \textrm{ is even} \}.
$$
Now, for fixed constants $d,D,\nub,\fp,$ for any function $\mu\in\Lambda_{\mub},$ let
$c^*(\mu)$ be the spreading speed associated to the semi-limit system (\ref{RPSL2}) with 
exchange function from the road to the field $\mu.$ 
 Let $c^*_0$ the spreading speed associated with the limit system (\ref{BRReq2}) with the same parameters and exchange rate
 from the road to the field $\mub.$

\subparagraph{Proof of Proposition \ref{vitessemaxRPSL2}}
If $D\leq 2d,$ then for all systems, $c^*=2\sqrt{d\fp}=c_K$ and the result is obvious. We consider only the case $D>2d.$
Let $c>2\sqrt{d\fp},$ $\displaystyle\la_2^\pm=\frac{c\pm\sqrt{c^2-c_K^2}}{2d}.$ Then, for all $\la\in]\la_2^-,\la_2^+[,$ 
the $(c,\la,\phi)-$equation (\ref{eqgeneralesurphi}) associated to the semi-limit system (\ref{RPSL2}) can be written
as follows:
\begin{equation}\label{eqphiRPSL2}
 \begin{cases}
 -d\phi''(y)+\lp\la c-d\la^2+\fp\rp\phi(y) = \mu(y) \qquad y>0\\
 -2d\phi'(0)=-\nub\phi(0), \qquad \phi\in H^1(\R^+).
 \end{cases}
\end{equation}
We keep in mind that we are interested in the behaviour of 
$$
\Psi_2(\la;\mu):=\nub\phi(0)
$$
where $\phi$ is the unique solution of (\ref{eqphiRPSL2}). For the sake of simplicity, we set
$$
P(\la)=\la c-d\la^2-\fp,\qquad \a^2=\frac{P(\la)}{d}.
$$ 
From the variation of constants method and the boundary conditions in 0 and $+\infty$ we have
\begin{equation*}
 \phi(y) =  e^{\a y}\lp K_1-\frac{1}{2\a}\int_0^y e^{-\a z}\frac{\mu(z)}{d}dz\rp + 
   e^{-\a y}\lp K_2+\frac{1}{2\a}\int_0^y e^{\a z}\frac{\mu(z)}{d}dz\rp.
\end{equation*}
where
$$
K_1=\frac{1}{2\a}\int_0^\infty e^{-\a z}\frac{\mu(z)}{d}dz,\ K_2 = \frac{2\a d-\nub}{2\a(2\a d+\nub)}\int_0^\infty e^{-\a z}\frac{\mu(z)}{d}dz.
$$
We finally get, returning in the $(\la,c)-$variables,
\begin{equation}\label{formulePsi2RPSL2}
 \Psi_2(\la;\mu):=\nub\phi(0)=\frac{2\nub}{\nub+2\sqrt{dP(\la)}}\int_0^\infty e^{-\sqrt{\frac{P(\la)}{d}}z}\mu(z)dz.
\end{equation}
Now, since $e^{-\sqrt{\frac{P(\la)}{d}}z}\leq1$ for all $z\geq0$ and $\mu$ being nonnegative and even, it is 
easily seen that
\begin{equation}\label{ineqRPSL2}
 \Psi_2(\la;\mu)\leq \Psi_2^0(\la;\mub)
\end{equation}
where $\Psi_2^0$ is given by the limit model (\ref{BRReq2}) associated to the same constants and exchange term $\mub.$
Hence, the above inequality (\ref{ineqRPSL2}) allows us to assert that
$$
\forall \mu\in\Lambda_{\mub},\ c^*(\mu)\leq c^*_0.
$$
Then, stating $c=c^*_0$, let us consider any approximation to the  identity sequence in (\ref{formulePsi2RPSL2}). 
For any $\mu\in\Lambda_{\mub},\e>0,$ set $\mu_\e(y)=\frac{1}{\e}\mu\lp\frac{y}{\e}\rp$. Then we get that 
$\Psi_2(\la;\mu_\e)$ converges to $\Psi_2^0(\la;\mub)$ as $\e$ goes to 0, uniformly in any compact set in $]\la_2^-,\la_2^+[$
 in $\la.$ Hence, 
 $$
 c^*(\mu_\e)\underset{\e\to 0}{\longrightarrow}c_0^*
 $$
 and the proof of Proposition \ref{vitessemaxRPSL2} is concluded.
 \qed

 \section{Self-similar exchanges for the semi-limit case (\ref{RPSL})}
 Considering the above result, it may seem natural that in the opposite case (\ref{RPSL}), that is
when exchanges from the road to the field are localised on the road, the spreading speed would also be maximum
for localised exchange from the field to the road.
In order to compare the spreading speed associated to the initial model (\ref{BRReq2}) and 
the one given by an integral model (\ref{RPSL}), it is first natural to look for the 
behaviour of the spreading speed when replacing the exchange function $\nu$ by a
self-similar approximation of a Dirac mass $\frac{1}{\e}\nu(\frac{y}{\e}).$
Hence, for a fixed constant rate $\nub,$ we will consider an exchange function of the form 
\begin{equation*}
 \nu\in\Lambda_{\nub}:=\{\nu\in C_0(\R),\nu\geq 0, \int\nu=\nub,\nu \textrm{ is even} \}.
\end{equation*}
For fixed constant $\fp,d,D,\mub,$ and $\nu\in\Lambda_{\nub}$ let $c^*_0$ be the spreading speed associated
to the limit system (\ref{BRReq2}), and $c^*(\e)$ the spreading speed associated to the semilimit model (\ref{RPSL}) with
exchange term
\begin{equation*}
 \nu_\e : y\longmapsto \frac{1}{\e}\nu\lp\frac{y}{\e}\rp.
\end{equation*}
The $(c,\la,\phi)-$equation (\ref{eqgeneralesurphi}) associated is
\begin{equation}\label{eqsurphiRPSLlim}
\begin{cases}
 -d\phi''(y)+(P(\la)+\nu_\e(y))\phi(y)  =  \mub\d_0 \\
 \phi \in H^1(\R),\ \phi \textrm{ is continuous}. 
\end{cases}
\end{equation}
The $\Psi_2$ function is given by
$$
\Psi_2(\la,c;\e)=\int_\R \nu_\e(y)\phi(y)dy
$$
where $\phi$ is the unique solution of (\ref{eqsurphiRPSLlim}) and $P(\la)=\la c-d\la^2-\fp.$
An integration of (\ref{eqsurphiRPSLlim}) yields
the following expression for $\Psi_2$
\begin{equation}\label{eqPsi2}
 \Psi_2(\la,c;\e)=\mub-P(\la)\int_\R \phi(y;\la,c,\e)dy
\end{equation}
from which we get the next proposition.

\begin{prop}\label{deriveePsi2positive}
 The function $\Psi_2,$ defined by (\ref{eqPsi2}) and (\ref{eqsurphiRPSLlim}), is continuously differentiable
 in all variables $\la,c,\e$
 up to $\e=0$ and satisfies for all $\la,c$
 $$
 \frac{d}{d\e}{\Psi_2}_{|\e=0}>0.
 $$
\end{prop}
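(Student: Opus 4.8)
The plan is to desingularise the perturbation by rescaling, which turns ``differentiability up to $\e=0$'' into a routine regular-perturbation statement, and then to read off the sign from an explicit first-order expansion. A direct differentiation of (\ref{eqsurphiRPSLlim}) in $\e$ is hopeless near $\e=0$, since $\partial_\e\nu_\e$ blows up like $\e^{-2}$; so I would instead pass to the inner variable $\xi=y/\e$ and study $\Phi(\xi;\e):=\phi(\e\xi;\e)$.

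Set $\a:=\sqrt{P(\la)/d}$, so the Green function of $-d\,\partial_{yy}+P(\la)$ is $G(y)=\frac{1}{2d\a}e^{-\a|y|}$ and $\phi=\mub\,G-G*(\nu_\e\phi)$. Substituting $y=\e\xi,\ y'=\e\xi'$ turns this into the fixed-point equation
\begin{equation*}
\Phi(\xi;\e)=\frac{\mub}{2d\a}\,e^{-\a\e|\xi|}-\frac{1}{2d\a}\int_\R e^{-\a\e|\xi-\xi'|}\,\nu(\xi')\,\Phi(\xi';\e)\,d\xi'.
\end{equation*}
Both source and kernel are now analytic in $\e$, uniformly on the compact support of $\nu$, and only $\Phi|_{\mathrm{supp}\,\nu}$ enters the integral. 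On $C(\mathrm{supp}\,\nu)$ this reads $\Phi=g_\e-T_\e\Phi$ with $T_\e$ compact and analytic in $\e$. At $\e=0$ the operator $T_0$ is rank one (its kernel is proportional to $\nu(\xi')$, its range the constants), and $I+T_0$ is invertible because on constants its eigenvalue is $1+\nub/(2d\a)\neq0$ while it is the identity on $\{\int\nu\,\Phi=0\}$. Hence $I+T_\e$ stays invertible for small $\e$, $\e\mapsto\Phi(\cdot;\e)$ is analytic up to $\e=0$, and so is $\Psi_2=\int\nu_\e\phi=\int\nu(\xi)\Phi(\xi;\e)\,d\xi$; smoothness in $(\la,c)$ follows from the analyticity of $P(\la)$ already exploited in Proposition \ref{sourire}. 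Solving the $\e=0$ equation gives the constant $\Phi_0\equiv K:=\mub/(\nub+2d\a)$, which reproduces $\Psi_2^0(\la)$ of (\ref{solPsi2BRR}) and is the natural consistency check.

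To get the derivative I differentiate the fixed-point equation at $\e=0$ and pair against $\nu$. Using $\Phi_0\equiv K$ the first-order term is
\begin{equation*}
\partial_\e\Phi|_{\e=0}(\xi)=-\frac{\mub}{2d}\,|\xi|+\frac{K}{2d}\int_\R|\xi-\xi'|\,\nu(\xi')\,d\xi'-\frac{J}{2d\a},\qquad J:=\frac{d}{d\e}\Psi_2\Big|_{\e=0}=\int_\R\nu\,\partial_\e\Phi|_{\e=0}.
\end{equation*}
Multiplying by $\nu$ and integrating turns this into a scalar equation for $J$ whose solution is, with $A:=\int\nu(\xi)|\xi|\,d\xi$ and $B:=\iint\nu(\xi)\nu(\xi')|\xi-\xi'|\,d\xi\,d\xi'$,
\begin{equation*}
J=\frac{\a\,\mub}{(\nub+2d\a)^{2}}\Bigl(B-(\nub+2d\a)\,A\Bigr).
\end{equation*}

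The regularity above is soft; the real obstacle is the sign. Since the prefactor is positive, $J>0$ is equivalent to the moment inequality $B>(\nub+2d\a)\,A$ for the even, nonnegative, compactly supported profile $\nu$. This is where the work concentrates: one would symmetrise $B-\nub A=\iint\nu(\xi)\nu(\xi')\bigl(|\xi-\xi'|-\tfrac12(|\xi|+|\xi'|)\bigr)\,d\xi\,d\xi'$ and use the evenness of $\nu$ to control it together with the remaining term $2d\a\,A$. The delicate point is precisely the dependence on $\a=\sqrt{P(\la)/d}$, and I would scrutinise for which range of $(\la,c)$ the inequality can actually be closed before asserting it at every $(\la,c)$, since the $2d\a\,A$ contribution grows with $P(\la)$ and may overturn the sign once $\a$ is large.
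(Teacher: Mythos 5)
Your proposal takes a genuinely different route from the paper, and the parts you actually prove are correct. The paper differentiates (\ref{eqsurphiRPSLlim}) in $\e$ directly, so that $\vp=\frac{d}{d\e}\phi$ solves (\ref{deriveephiSL}) with the singular right-hand side $\frac{1}{\e^2}g(y/\e)\phi$, and then argues through uniform bounds on $\vp$ and a limiting jump problem; you instead desingularise first, via the inner variable $\xi=y/\e$, a Green-function fixed point, and a rank-one Fredholm perturbation at $\e=0$. Your regularity argument is complete (and cleaner than the paper's), and your first-order formula is right. It can be tested on the profile $\nu=\frac{\nub}{2}\lp\d_{-1}+\d_{1}\rp$ (a limit of admissible mollified profiles), for which (\ref{eqsurphiRPSLlim}) is solvable by hand: with $d=1$ and $\a=\sqrt{P(\la)}$ one finds
$$
\Psi_2(\e)=\frac{2\mub\nub}{\nub e^{-\a\e}+(\nub+4\a)e^{\a\e}},
\qquad
\frac{d}{d\e}{\Psi_2}_{|\e=0}=-\frac{2\a^2\mub\nub}{(\nub+2\a)^2},
$$
which is exactly your $J$ with $A=\nub$, $B=\nub^2$.

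The doubt in your last paragraph is the crux, and it is fully justified: the gap cannot be closed, because the positivity claimed ``for all $\la,c$'' is false. By the triangle inequality $B\le 2\nub A$, hence $B-(\nub+2d\a)A\le(\nub-2d\a)A$, and since $A>0$ for every $\nu\in\Lambda_{\nub}$, your $J$ is strictly negative as soon as $2d\a\ge\nub$, i.e. $P(\la)\ge\nub^2/(4d)$ --- which happens at every $\la$ once $c$ is large; the two-bump example above even gives $J<0$ for every $\a>0$, while for the uniform profile on $[-1,1]$ (with $d=1$) one gets $J>0$ exactly when $\a<\nub/6$. So the sign genuinely depends on $(\la,c)$ and on the moments of $\nu$, and what your computation establishes is the correct statement: $\frac{d}{d\e}\Psi_2|_{\e=0}>0$ if and only if $B>(\nub+2d\a)A$. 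The defect is in the paper's proof, not in your method. First, the assertion following (\ref{int1}) that $\int_0^1 z g(z)dz=-\int_0^1 z\nu(z)dz>0$ is wrong (that quantity is negative). Second, the passage to (\ref{integralelimite}) uses the convergence of $\phi'(0)$ to $\phi_0'(0^+)$, whereas the jump condition at the Dirac source forces $\phi'(0^+)=-\mub/(2d)$ for every $\e>0$, while $\phi_0'(0^+)=-\sqrt{P/d}\,\phi_0(0)$: the $C^1$ convergence invoked there cannot hold up to $y=0$, precisely because of the boundary layer that your rescaling resolves; the same layer invalidates the naive limit equation used to identify $\vp_l$, since $\nu_\e\vp$ does not concentrate on the outer limit's value at $0$. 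Incidentally, the corrected dichotomy (sign governed by the size of $\sqrt{dP(\la^*)}$ against the moments of $\nu$) is of the same nature as the one the paper itself proves for the perturbation $(1-\e)\d_0+\e\up$ in Theorem \ref{thmdenonacceleration}, so the contrast drawn there between self-similar and general approximations of the Dirac mass disappears.
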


Considering the monotonicity of $\Psi_2$ with respect to $c,$ this provides the corollary
\begin{cor}\label{corollaireVitesse}
Let us consider $c^*$ as a function of the $\e$ variable. Then there exists $\e_0,$ 
$$
\forall \e<\e_0,\ c^*(\e)>c^*_0
$$
In other words, the Dirac mass is a local minimizer for the spreading speed when considering approximation of Dirac functions.
\end{cor}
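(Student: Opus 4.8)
The plan is to obtain $c^*$ as the solution of a tangency condition between the two graphs $\Gamma_1$ and $\Gamma_2$ and then to differentiate that condition in $\e$ via the implicit function theorem, reading off the sign of $dc^*/d\e$ from Proposition \ref{deriveePsi2positive}. A preliminary remark is in order: whenever $D\leq 2d$ the spreading speed is pinned at $c_{KPP}=2\sqrt{d\fp}$ independently of the shape of the exchange term (the subsolution of Proposition \ref{soussol} in that regime never involves $\Gamma_2$), so $c^*(\e)=c^*_0=c_{KPP}$ for every $\e$ and the strict inequality cannot hold. Hence the corollary is meaningful only for $D>2d$, which is precisely the regime in which, by Proposition \ref{spreading_speed}, $c^*$ is produced by a genuine second-order contact of $\Gamma_1$ and $\Gamma_2$; I restrict to this case.

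For $D>2d$ I would encode the tangency at $c=c^*(\e)$ through
\begin{equation*}
F_1(\la,c,\e):=\Psi_2(\la,c;\e)-\Psi_1(\la;c),\qquad F_2(\la,c,\e):=\partial_\la\Psi_2(\la,c;\e)-\partial_\la\Psi_1(\la;c),
\end{equation*}
the contact point being the pair $(\la^*(\e),c^*(\e))$ where $F_1=F_2=0$. At $\e=0$ this holds at some $(\la^*_0,c^*_0)$, with $\Psi_2(\cdot,\cdot;0)$ equal to the explicit limit profile $\Psi_2^0$ of (\ref{solPsi2BRR}). Proposition \ref{deriveePsi2positive} guarantees that $(\la,c,\e)\mapsto\Psi_2$ is $C^1$ up to $\e=0$, so $(F_1,F_2)$ is $C^1$ there and the implicit function theorem is available once the Jacobian in $(\la,c)$ is shown to be invertible.

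The invertibility is where the convexity information enters. At the contact point one has $\partial_\la F_1=\partial_\la\Psi_2-\partial_\la\Psi_1=F_2=0$, so the Jacobian is anti-triangular and
\begin{equation*}
\det\begin{pmatrix}\partial_\la F_1 & \partial_c F_1\\ \partial_\la F_2 & \partial_c F_2\end{pmatrix}=-\,\partial_c F_1\cdot\partial_\la F_2.
\end{equation*}
Here $\partial_\la F_2=\partial_{\la\la}\Psi_2+2D>0$ since $\Psi_2$ is strictly convex (Proposition \ref{sourire}) and $\partial_{\la\la}\Psi_1=-2D$; and $\partial_c F_1=\partial_c\Psi_2-\la<0$, because $\partial_c\Psi_2<0$ (Proposition \ref{sourire}) and $\la>0$. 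The determinant is therefore strictly positive, and the theorem produces a $C^1$ branch $\e\mapsto(\la^*(\e),c^*(\e))$ through $(\la^*_0,c^*_0)$.

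It then remains to extract the sign of the derivative. Differentiating $F_1=0$ along the branch and using $\partial_\la F_1=0$ annihilates the $d\la^*/d\e$ term, leaving $\partial_c F_1\,\tfrac{dc^*}{d\e}=-\partial_\e\Psi_2$; since $\partial_\e\Psi_2|_{\e=0}>0$ by Proposition \ref{deriveePsi2positive} and $\partial_c F_1<0$, we get $\tfrac{dc^*}{d\e}\big|_{\e=0}>0$, whence $c^*(\e)>c^*_0$ for all small $\e>0$, which is the assertion. The step I expect to be the real obstacle is the regularity bookkeeping that makes this argument legitimate up to the degenerate boundary $\e=0$: one must know that $\Psi_2$ and $\partial_\la\Psi_2$ extend $C^1$-continuously to the Dirac limit, and that the contact at $\e=0$ is a nondegenerate (second-order) tangency, i.e.\ $\partial_{\la\la}\Psi_2^0>0$ strictly, so that the branch is well defined. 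Both are supplied by Propositions \ref{sourire} and \ref{deriveePsi2positive}; granting them, the sign computation above is immediate.
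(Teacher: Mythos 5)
Your preliminary observation about $D\leq 2d$ is correct and worth making: in that regime all the models have $c^*=c_{KPP}$ whatever the exchange terms, so the strict inequality in Corollary \ref{corollaireVitesse} can only hold for $D>2d$, a restriction the paper leaves implicit. Your main argument, however, takes a genuinely different route from the paper's — an implicit-function-theorem analysis of the tangency system $F_1=F_2=0$ — and it has a genuine gap exactly at the point you flag as ``the real obstacle'' and then wave away. To apply the implicit function theorem to $(F_1,F_2)$ in the unknowns $(\la,c)$ with parameter $\e$, and then to differentiate the branch in $\e$, you need $(F_1,F_2)$ to be $C^1$ jointly in $(\la,c,\e)$ up to $\e=0$. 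Since $F_2=\partial_\la\Psi_2-\partial_\la\Psi_1$, this requires the second derivatives $\partial_{\la\la}\Psi_2$, $\partial_{\la c}\Psi_2$ and, crucially, $\partial_{\la\e}\Psi_2$ to exist and be continuous up to the singular limit $\e=0$. Neither proposition you cite supplies this. Proposition \ref{deriveePsi2positive} gives only first-order regularity: $\Psi_2$ itself is $C^1$ in $(\la,c,\e)$ up to $\e=0$. Proposition \ref{sourire} gives smoothness and convexity in $\la$ for each \emph{fixed} exchange function (hence for each fixed $\e>0$, and for the explicit limit $\Psi_2^0$ of (\ref{solPsi2BRR})), but says nothing about uniformity of these second derivatives as $\nu_\e$ concentrates to a Dirac mass. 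Note that already the first-order statement $\frac{d}{d\e}\Psi_2|_{\e=0}>0$ costs the paper all of Section 8 (convergence of the right-hand side of (\ref{deriveephiSL}) to a negative Dirac mass, uniform bounds on $\vp$, identification of the limit); a second-order analogue would require comparable extra work and appears nowhere in the paper. There is also a smaller unaddressed point: one must identify the locally unique IFT branch with the actual spreading speed, i.e.\ check that for small $\e$ the critical speed is still achieved at an interior tangency near $(\la^*_0,c^*_0)$; this is fillable by strict convexity/concavity and monotonicity of $F_1$ in $c$, but it is not automatic.

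The paper's own proof is engineered precisely to avoid differentiating the tangency condition, and needs only the regularity actually proved. It freezes $c=c^*_0$ and uses Proposition \ref{deriveePsi2positive} locally uniformly in $\la$: for small $\e$, $\Psi_2(\la,c^*_0;\e)>\Psi_2(\la,c^*_0;0)$ on an open set $V$ containing the tangency point $\la^*_0$ and an auxiliary point $\las$ with $\Psi_2(\las,c^*_0;0)=\Psi_1(\la_2^-,c^*_0)$; outside $V$ the unperturbed curve already clears $\Gamma_1$ with a margin (both curves are monotone on $]\la_2^-,\las[$), so $\Psi_2(\cdot,c^*_0;\e)>\Psi_1(\cdot,c^*_0)$ everywhere and there is no intersection at speed $c^*_0$; the monotonicity of $\Psi_2$ (decreasing) and $\Psi_1$ (increasing) in $c$ then excludes intersection for every $c\leq c^*_0$, whence $c^*(\e)>c^*_0$. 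If you did establish the missing second-order regularity, your route would buy strictly more than the paper's — differentiability of $\e\mapsto c^*(\e)$ together with the formula $\frac{dc^*}{d\e}\big|_{\e=0}=\partial_\e\Psi_2/\lp\la^*_0-\partial_c\Psi_2\rp>0$ — but as written the argument cannot be run, whereas a purely first-order comparison of the curves at the frozen speed suffices for the statement of the corollary.
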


\paragraph*{Proof of Proposition \ref{deriveePsi2positive}}
Throughout the proof, the function $\phi,$ depending on $\e,$ will be the solution of (\ref{eqsurphiRPSLlim}), 
and we will denote 
$$
\vp:=\frac{d}{d\e}\phi
$$
its derivative with respect to $\e.$ Moreover, once again, we set $d=1$
for the sake of simplicity, and consider an exchange function $\nu$ with support in
$[-1,1].$ Differentiating (\ref{eqsurphiRPSLlim}) we obtaint that $\vp$ is the unique solution of
\begin{equation}\label{deriveephiSL}
 \begin{cases}
 -\vp''(y)+\lp P(\la+\frac{1}{\e}\nu\lp\frac{y}{\e}\rp)\rp\vp(y) = \frac{1}{\e^2}g\lp\frac{y}{\e}\rp\phi(y) \\
 \vp\in H^1(\R)
 \end{cases}
\end{equation}
where the function $g,$ with compact support in $[-1,1],$ is defined by 
\begin{equation}\label{defg}
 g:z\mapsto \frac{d}{dz}[z.\nu(z)].
\end{equation}
  
 Thanks to (\ref{eqPsi2}), it is enough to prove that $\vp$ tends to a negative function as $\e$ goes to 0, uniformly $L^1$ near
 $\e=0.$ The proof is divided in four steps. We first recall the convergence of $\phi$ as $\e$ goes to 0.
 Then, the most important step is the convergence of the righthandside of (\ref{deriveephiSL}) to
 a Dirac measure of negative mass. The third step is to find some uniform boundedness for the sequence $\lp\vp\rp_\e,$ 
 in order to finally pass to the limit and conclude the proof.
 
\subparagraph*{Convergence of $\phi$} It has been proved in \cite{Pauthier2}
that $\phi$ converges in the $C^1$ norm to 
\begin{equation}\label{phi0}
 \phi_0:y\mapsto \frac{\mub}{\nub+2\sqrt{P(\la)}}e^{-\sqrt{P(\la)}|y|}
\end{equation}
as $\e$ goes to 0, and this convergence is locally uniform in $\la,c.$
Actually, the monotonicity of $\phi$ with respect to $y$ makes the proof easier.

\subparagraph*{Convergence of the righthandside of (\ref{deriveephiSL}) to a Dirac mass}
As its support shrinks to 0, and thanks to the reguarity of $g$ and $\phi$ uniformly in $\e,$ 
it is enough to prove the convergence of the mass to get the convergence in the sense of distribution. Let
us consider the integral
\begin{equation*}
 I(\e):=\int_\R \frac{1}{\e^2}g\lp\frac{y}{\e}\rp\phi(y;\e)dy.
\end{equation*}
Evenness of $g$ and $\phi,$ compact support of $g,$ and a Taylor formula yield
\begin{equation}\label{int1}
 \frac{1}{2}I(\e)=\frac{1}{\e}\int_0^1 g(z)\lp\phi(0)+\e z\phi'(0)+\int_0^{\e z}(\e z-t)\phi''(t)dt\rp dz.
\end{equation}
Recall that $g$ is defined by (\ref{defg}), so $\displaystyle \int_0^1 g(z)dz=0.$ An integration by parts
gives 
$$
\int_0^1 z.g(z)dz=-\int_0^1z.\nu(z)dz>0.
$$
It remains to determine the last term in (\ref{int1}) given by
\begin{equation*}
 I_2=\frac{1}{\e}\int_0^1 g(z)\int_0^{\e z}(\e z-t)\phi''(t)dtdz.
\end{equation*}
Recall that $\phi$ is a solution of (\ref{eqsurphiRPSLlim}) and we get locally in $\la,c$
\begin{align}
 I_2 & = \e\int_0^1 g(z)\int_0^z \lp z-u\rp\lp P(\la)+\frac{1}{\e}\nu(u)\rp\phi(\e u)dudz \nonumber \\
  & = \int_0^1 g(z)\int_0^z \lp z-u\rp\nu(u)\phi(\e u)dudz +O(\e). \label{intC}
\end{align}
With the uniform boundedness of $\lV\phi'\rV_{L^\infty}$ in $\e,$ (\ref{intC}) becomes
\begin{align}
 I_2 & = \phi(0)\lp \int_0^1 z.g(z)\int_0^z \nu(u)dudz -  \int_0^1 g(z)\int_0^z u.\nu(u)dudz \rp + O(\e)\nonumber \\
  & = -\phi(0)\int_0^1 z\nu(z)\int_0^z \nu(u)dudz+ O(\e). \label{intC12}
\end{align}
Inserting (\ref{intC12}) in (\ref{int1}), and with the convergence together to its derivative of $\phi$ to $\phi_0$ defined by (\ref{phi0}), 
we get, as $\e$ tends to 0,
\begin{equation}\label{integralelimite}
I \underset{\e\to 0}{\longrightarrow}\mub\int_0^1\lp \frac{1}{\nub+2\sqrt{P(\la)}}\int_{-z}^z\nu(u)du - 1 \rp z.\nu(z)dz:=I_0.
\end{equation}
We notice that $I_0<0.$

\subparagraph*{Uniform boundedness for $\lV\vp\rV_{L^\infty}$}Once again, let us set $\a^2:=P(\la).$
As $g$ is compactly supported and even, for all $\e>0,$ there exists $K(\e)$ such that
\begin{equation}\label{yplusgdqueeps}
 \forall |y|>\e,\qquad \vp(y)=K(\e) e^{-\a|y|}.
\end{equation}
We do the change of variable $\xi=\frac{y}{\e}$ and use the same notation $\vp(\xi)=\vp(y)$ for the sake of simplicity.
Equation (\ref{eqsurphiRPSLlim}) becomes in the $\xi-$variable
\begin{equation}\label{eqenxi}
\begin{cases}
 -\vp''(\xi)+\lp \e^2\a^2+\e\nu(\xi) \rp\vp(\xi)=g(\xi)\phi(\e\xi) \\
 \vp(\pm1)=K(\e)e^{-\a\e}.
\end{cases}
\end{equation}
As for Theorem \ref{thmdenonacceleration}, let us set 
\begin{equation*}
\vp(\xi)=\vp_0(\xi)+\e\vp_1(\xi) 
\end{equation*}
where $\vp_0$ is the unique solution of 
\begin{equation*}
\begin{cases}
  -\vp_0''(\xi)=g(\xi)\phi(\e\xi) \\
   \vp_0(\pm1)=K(\e)e^{-\a\e}.
\end{cases}
\end{equation*}
This yields the following explicit formula for $\vp_0$
\begin{equation}\label{varpi0}
 \vp_0(\xi)=K(\e)e^{-\a\e}-\int_{-1}^\xi\int_0^z g(u)\phi(\e u)dudz.
\end{equation}
Now we introduce the operator
\begin{equation*}
\ML : \begin{cases}
       X & \longrightarrow X \\
       \psi & \longmapsto \left\{ \xi\mapsto \int_{-1}^\xi\int_0^z\lp\e\a^2+\nu(u) \rp\psi(u)dudz \right\}   
      \end{cases}
 \end{equation*}
where $X=\{\psi\in C^1(-1,1),\psi \textrm{ is even}\}$ endowed with the $C^1$ norm.
$\ML$ is obviously a bounded operator and $\vp_1$ satisfies
\begin{equation*}
 \lp I-\e\ML\rp\vp_1=\ML\vp_0.
\end{equation*}
Hence there exists a constant $C,$ for $\e$ small enough, $\displaystyle \lV\vp_1\rV_{C^1(-1,1)}\leq C\lV\vp_0\rV_{C^1(-1,1)}.$
We also have the integral equation for $\vp_1$
\begin{equation*}
 \vp_1(\xi)=\int_{-1}^\xi\int_0^z\lp\e\a^2+\nu(u) \rp\lp \vp_0+\e\vp_1\rp(u) dudz.
\end{equation*}
The continuity of the derivative in 1 gives
\begin{equation}\label{continuitederivee}
 \vp_0'(1)+\e\vp_1'(1)=-\e\a K(\e)e^{-\a\e}.
\end{equation}
The computation done in the previous paragraph yields: 
\begin{align}
 \vp_0'(1) & = -\int_0^1 g(u)\phi(\e u)du \nonumber \\
 \vp_0'(1) & = -\e\frac{1}{2}I_0 + o(\e) \label{phi0prime1}
\end{align}
where $I_0$ is defined by (\ref{integralelimite}). 
Using the integral equation for $\vp_1,$ the previous domination, (\ref{varpi0}) and at last the convergence of $\phi$ as $\e$ goes to 0,
there exists a constant $M$ such that
\begin{align}
 \vp_0'(1) & = \int_0^1\lp\e\a^2+\nu(u) \rp\lp \vp_0+\e\vp_1\rp(u) du \nonumber \\
  & = \frac{\nub}{2}K(\e)-\int_0^1\nu(\xi)\int_{-1}^\xi\int_0^z g(u)\phi(\e u)dudzd\xi+O\lp\e.K(\e)\rp \nonumber \\
 \vp_0'(1) & = \frac{\nub}{2}K(\e)-M+O\lp\e\lp1+K(\e)\rp\rp. \label{phi1prime1}
\end{align}
Insert (\ref{phi0prime1}) and (\ref{phi1prime1}) in (\ref{continuitederivee}) and we get
\begin{equation}
 \lim_{\e\to0}\sup \ |K(\e)|<+\infty,
\end{equation}
which provides with (\ref{varpi0}) and (\ref{yplusgdqueeps}) the boundedness of $\lV\vp\rV_{L^\infty(\R)}$ as $\e$ goes to 0.
Moreover, the bound is locally uniform on $\la,c.$

\subparagraph*{Convergence of $\vp,$ conclusion of the proof} We return to the initial variable.
Let $K_0$ be any limit point of $\lp K(\e)\rp_\e.$ Then a subsequence of $\lp \vp\rp_\e$ converges in the sense of distributions to
\begin{equation*}
 \vp_l(y)=K_0 e^{-\sqrt{P(\la)}|y|}
\end{equation*}
and $\vp_l$ satisfies in the sense of distributions
\begin{equation*}
 -\vp_l''(y)+\lp P(\la)+\nub\d_0\rp\vp_l(y)=I_0\d_0
\end{equation*}
whose unique solution is 
\begin{equation}\label{limitevp}
 \vp_l:y\longmapsto \frac{I_0}{\nub+2\sqrt{P(\la)}}e^{-\sqrt{P(\la)}|y|}.
\end{equation}
Being the only possible limit point, (\ref{limitevp}) is the limit of $(\vp)$ as $\e$ goes to 0. $I_0$ is negative, and
so is $\vp_l.$ The uniform boundedness allows the derivation in (\ref{eqPsi2}), and the proof is concluded.
\qed

\subsection{Proof of Corollary \ref{corollaireVitesse}}
 Let $c^*_0$ the spreading speed associated to the limit model (\ref{BRReq2}),
$(c^*_0,\la^*_0,\phi_0(c^*_0,\la^*_0))$ the corresponding linear travelling wave.
We consider $\Psi_2$ as a function of $(\la,c;\e),$ $\Psi_1$ as a function of $(\la,c).$ We have 
$\Psi_1(\la^*_0,c^*_0)=\Psi_2(\la^*_0,c^*_0;0)$ and $\Psi_1(\la^-_2,c^*_0)<\Psi_2(\la^-_2,c^*_0;0).$ 
Hence ,there exists $\las,$ $\la_2^-<\las<\la_0^*,$ $\Psi_2(\las,c^*_0;0)=\Psi_1(\la_2^-,c^*_0).$
Let $V$ be any open set in $]\la_2^-,\la_2^+[$ containing $\las$ and $\la_0^*.$ From Proposition \ref{deriveePsi2positive}, there
exists $\e_0$ such that $\forall \e<\e_0,$ $\forall \la\in V,$ $\Psi_2(\la,c^*_0;\e)>\Psi_2(\la,c^*_0;0).$ From the definition of $\las,$
it yields $\Psi_2(\la,c^*_0;\e)>\Psi_1(\la,c^*_0),$ $\forall \la.$ The monotonicity of $\Psi_1$ and $\Psi_2$ with respect to $c$ concludes the proof.
\qed

\section{The semi-limit case (\ref{RPSL}): non optimality of concentrated exchanges}
Considering the above result, it may seem natural that in the case (\ref{RPSL}), that is
when exchanges from the road to the field are localised on the road, the spreading speed would be minimal
for localised exchange from the field to the road. The purpose of this section is
the proof of Theorem \ref{thmdenonacceleration} in which we show that any behavour may happen in the neighbourhood of a Dirac measure.
For the sake of convenience, throughout this section we set 
$$
d=\nub=1.
$$
Let us recall that we consider exchange terms $\nu$ of the form
\begin{equation}
 \nu(y)=(1-\e)\d_0+\e\up(y)
\end{equation}
where 
\begin{equation*}
 \up\in\Lambda_1:=\{\up\in C_0(\R),\up\geq 0, \int\up=1,\up \textrm{ is even} \}.
\end{equation*}

Let $c^*_0$ the spreading speed associated to the limit model (\ref{BRReq2}),
$(c^*_0,\la^*_0,\phi_0(c^*_0,\la^*_0))$ the corresponding linear travelling waves. The $(c,\la,\phi)-$equation associated to the
system (\ref{RPSL}) with exchange term of the form (\ref{nuperturbe}) is as follows, completed by evenness:
\begin{equation}\label{eqphinupert}
\begin{cases}
 -\phi''(y)+\lp-\fp+\la c-\la^2+\up(y)\rp\phi(y) = 0 \qquad y>0\\
 \phi'(0)=\frac{1}{2}\lp(1-\e)\phi(0)-\mub\rp, \qquad \phi\in H^1(\R^+).
 \end{cases}
\end{equation}
The associated function $\Psi_2$ is given by
\begin{equation}\label{Psi2pert}
 \Psi_2(\la,c)=(1-\e)\phi(0)+\e\int_\R \up\phi
\end{equation}
where $\phi$ is the unique solution of (\ref{eqphinupert}). What we have to show is that, in
a neighbourhood of $(\la^*_0,c^*_0),$ the difference $\lp\Psi_2^0(\la,c)-\Psi_2(\la,c)\rp$ is of constant sign for $\e$
small enough, and that this sign can be different depending on the parameters $D,\mub,\fp.$ Once again, for the sake of simplicity 
and as long as
there is no possible confusion, we set 
$$
P(\la)=-\fp+\la c-\la^2, \qquad \a=\sqrt{P(\la)}.
$$

Of course, we are looking for function $\phi$ of the form 
\begin{equation}\label{ansatz}
\phi = \phi_0+\e\phi_1 
\end{equation}
where $\phi_0$ is solution of (\ref{eqsurphiBRR}). Hence, $\phi_1$ satisfies
\begin{equation}\label{eqphi1}
 -\phi_1''+(\d_0+\a^2)\phi_1 = \lp\d_0-\up\rp\lp\phi_0+\e\phi_1\rp.
\end{equation}

\begin{lem}\label{bornephi1}
 Let $\a_0>0.$ There exist $\e_0>0,$ $K>0,$ depending only on $\a_0,$ such that $\forall \e<\e_0,$
 $$
 \lV\phi_1\rV_{L^\infty} \leq K \lV\phi_0\rV_{L^\infty}
 $$
 where $\phi_1$ is the solution of (\ref{eqphi1}). We may also keep in mind that 
 $\lV\phi_0\rV_{L^\infty}=\phi_0(0).$ We can see in (\ref{solPsi2BRR}) that it is
 uniformly bounded in $\a,D,\fp.$
\end{lem}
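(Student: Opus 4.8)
The plan is to treat (\ref{eqphi1}) as a linear equation for $\phi_1$ in which the only term carrying the unknown on the right-hand side comes with a factor $\e$, and to invert the resulting small perturbation of an explicit, $\e$-independent operator. First I would record that, in the present normalisation $d=\nub=1$, the solution of (\ref{eqsurphiBRR}) is explicit,
\[
\phi_0(y)=\frac{\mub}{1+2\a}\,e^{-\a|y|},\qquad \lV\phi_0\rV_{L^\infty}=\phi_0(0)=\frac{\mub}{1+2\a},
\]
with $\a=\sqrt{P(\la)}$. Moving the $\e\phi_1$ contribution of (\ref{eqphi1}) to the left, $\phi_1$ solves
\[
\mathcal{A}_\e\phi_1:=\lp-\partial_{yy}+\a^2+\d_0\rp\phi_1-\e(\d_0-\up)\phi_1=(\d_0-\up)\phi_0.
\]
Denoting by $\mathcal{A}_0=-\partial_{yy}+\a^2+\d_0$ the operator of the reference model (\ref{eqsurphiBRR}), which is invertible for $\a>0$, I factor $\mathcal{A}_\e=\mathcal{A}_0\lp I-\e\,\mathcal{M}\rp$ with $\mathcal{M}:=\mathcal{A}_0^{-1}(\d_0-\up)$, so that
\[
\phi_1=\lp I-\e\mathcal{M}\rp^{-1}\mathcal{A}_0^{-1}(\d_0-\up)\phi_0,
\]
and everything reduces to bounding $\mathcal{M}$ and the source $\mathcal{A}_0^{-1}(\d_0-\up)\phi_0$ on the space of bounded even continuous functions, uniformly for $\a\geq\a_0$.

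Next I would obtain these bounds by comparison with the explicit Green's function of $L_0:=-\partial_{yy}+\a^2$, namely $G_\a(y)=\frac{1}{2\a}e^{-\a|y|}$, for which $\lV G_\a\rV_{L^1}=\a^{-2}$ and $\lV G_\a\rV_{L^\infty}=(2\a)^{-1}$. Since $\d_0\geq0$, the maximum principle gives that $\mathcal{A}_0^{-1}$ is positivity-preserving and dominated by $L_0^{-1}=G_\a\ast\,\cdot\,$; in particular $\mathcal{A}_0^{-1}\d_0=\frac{1}{1+2\a}e^{-\a|y|}$ has sup-norm $\leq1$. For a bounded continuous $\psi$ one then splits $\mathcal{M}\psi=\psi(0)\,\mathcal{A}_0^{-1}\d_0-\mathcal{A}_0^{-1}(\up\psi)$ and estimates
\[
\lV\mathcal{M}\psi\rV_{L^\infty}\leq |\psi(0)|+\lV G_\a\ast(\up|\psi|)\rV_{L^\infty}\leq\lp1+\tfrac{1}{2\a_0}\rp\lV\psi\rV_{L^\infty},
\]
where the last step uses $\lV G_\a\rV_{L^\infty}\lV\up\rV_{L^1}=(2\a)^{-1}$ together with $\int\up=1$, so that the bound is uniform in $\up\in\Lambda_1$ and decreasing in $\a$. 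The same computation applied to the source yields $\lV\mathcal{A}_0^{-1}(\d_0-\up)\phi_0\rV_{L^\infty}\leq(1+\frac{1}{2\a_0})\lV\phi_0\rV_{L^\infty}$, since $\phi_0(0)=\lV\phi_0\rV_{L^\infty}$.

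Finally, with $C(\a_0):=1+\frac{1}{2\a_0}$ bounding $\lV\mathcal{M}\rV$ uniformly for $\a\geq\a_0$, I choose $\e_0=\frac{1}{2C(\a_0)}$; for $\e<\e_0$ the Neumann series gives $\lV(I-\e\mathcal{M})^{-1}\rV\leq2$, whence
\[
\lV\phi_1\rV_{L^\infty}\leq2\,\lV\mathcal{A}_0^{-1}(\d_0-\up)\phi_0\rV_{L^\infty}\leq2\,C(\a_0)\,\lV\phi_0\rV_{L^\infty}=:K\,\lV\phi_0\rV_{L^\infty},
\]
which is the claim, with $K$ depending only on $\a_0$. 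The only genuinely \emph{delicate} point is the presence of the Dirac mass $\d_0$: it forces the whole argument to be run in a space where the point evaluation $\psi\mapsto\psi(0)$ is continuous (bounded even continuous functions, which is legitimate because $\phi_1\in H^1(\R)\subset C(\R)$), and it is the comparison $\mathcal{A}_0^{-1}\leq L_0^{-1}$ with the explicit kernel $G_\a$ that makes every constant depend on $\a$ only through the lower bound $\a_0$.
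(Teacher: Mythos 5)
Your proof is correct, and its skeleton is exactly that of the paper: your operator $\mathcal{M}=\mathcal{A}_0^{-1}(\d_0-\up)$ is precisely the paper's $\ML$ acting on bounded, even, continuous functions, the identity $\phi_1=\lp I-\e\ML\rp^{-1}\ML\phi_0$ is the paper's (\ref{operator}), and the conclusion by a Neumann series with $\e_0$ and $K$ depending only on $\a_0$ is the same. The one genuine difference is how the uniform bound on $\lV\ML\rV$ for $\a\geq\a_0$ is obtained. The paper solves $-\vp''+(\a^2+\d_0)\vp=(\d_0-\up)\psi$ explicitly by variation of constants (formula (\ref{calculL})) and majorizes the resulting expression term by term to get (\ref{normeoperator}); you instead avoid any computation by invoking the maximum principle twice -- positivity of $\mathcal{A}_0^{-1}$, then domination of $\mathcal{A}_0^{-1}$ by the free resolvent with kernel $\frac{1}{2\a}e^{-\a|y|}$ -- which yields the cleaner bound $\lV\ML\rV\leq 1+\frac{1}{2\a_0}$, manifestly uniform in $\up\in\Lambda_1$. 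For the lemma itself your comparison argument is shorter and more robust (no case distinction on the sign of $1-2\a$, no bookkeeping of four terms); what it does not deliver is the explicit representation of $\ML\psi$, which the paper reuses immediately after the lemma to compute $\phi_1(0)$ up to $O(\e)$ in (\ref{phi1zero}) -- the quantity on which the sign analysis of $\Psi_2^0-\Psi_2$, and hence the proof of Theorem \ref{thmdenonacceleration}, actually rests. So your route proves the stated lemma in full, but the paper's heavier computation is an investment for the next step rather than an inefficiency.
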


\begin{proof}
We introduce the operator
\begin{equation*}
\ML : \begin{cases}
       X & \longrightarrow X \\
       \psi & \longmapsto \vp       
      \end{cases}
 \end{equation*}
where $X=\{\psi\in BUC(\R),\psi \textrm{ is even}\}$ and $\vp$ is the only bounded solution of 
\begin{equation}
 -\vp''+(\a^2+\d_0)\vp = \lp\d_0-\up\rp\psi.
\end{equation}
From (\ref{eqphi1}), it is easy to see that $\phi_1$ satisfies
$
\phi_1=\ML\phi_0+\e\ML\phi_1.
$
As $\up$ and $\phi_0$ are even, we focus on $\ML$ defined for bounded, uniformly continuous even functions.
Let $\psi\in BUC(\R)$ be any even function, and $\vp:=\ML\psi.$ That is, $\vp$ satisfies 
\begin{equation}
 \begin{cases}
 -\vp''+\a^2\vp = -\up\psi \qquad y>0 \\
 \vp'(0)=\frac{1}{2}\lp\vp(0)-\psi(0)\rp.
 \end{cases}
\end{equation}
As in the previous section, a simple computation gives
\begin{align}
 \vp(y) = & -\frac{e^{\a y}}{2\a}\int_y^\infty e^{-\a z}(\up\psi)(z)dz \label{calculL} \\
  & + e^{-\a y}\lp \frac{\psi(0)}{1+2\a}+\frac{1-2\a}{2\a(1+2\a)}\int_0^\infty e^{-\a z}(\up\psi)(z)dz -\frac{1}{2\a}\int_0^y e^{\a z}(\up\psi)(z)dz \rp. \nonumber
\end{align}
Recall that $\up$ is nonnegative and of weight 1, and $\a=\sqrt{P(\la)}>0$. A rough majoration in (\ref{calculL}) yields
\begin{equation}\label{normeoperator}
 \lV\vp\rV_{L^\infty} \leq  \lV\psi\rV_{L^\infty}\lp\frac{1}{1+2\a}+\frac{\lb 1-2\a\rb}{4\a(1+2\a)}+\frac{1}{4\a}\rp.
\end{equation}
Hence $\ML$ is a bounded linear operator, with norm $\lV\ML\rV$ depending on $\a,$ and uniformly bounded on $\a>\a_0>0.$
For $\e$ small enough, $(I-\e\ML)$ is invertible with bounded inverse and 
\begin{equation}\label{operator}
 \phi_1 = \lp I-\e\ML\rp^{-1}\ML \phi_0.
\end{equation}
Moreover, $\phi_1$ satisfies the integral equation $\phi_1=\ML(\phi_0+\e\phi_1)$ given by (\ref{calculL}).
Combining (\ref{operator}) with (\ref{normeoperator}) concludes the proof of Lemma \ref{bornephi1}.
\end{proof}

\subparagraph{The difference $\Psi_2^0-\Psi_2$} The function $\Psi_2$ is given by (\ref{Psi2pert}) with $\phi$ of the form (\ref{ansatz}).
Then, using Lemma \ref{bornephi1}, for all $\a>\a_0,$
\begin{align}
 \Psi_2^0-\Psi_2  & = \phi_0(0)-(1-\e)\lp\phi_0(0)+\e\phi_1(0)\rp-\e\int_\R\lp\phi_0+\e\phi_1\rp\up \nonumber \\
  & = \e\lp\phi_0(0)-\phi_1(0)-\int_\R\up\phi_0\rp+o(\e). \label{difference1}
\end{align}
It appears necessary to compute $\phi_1(0).$ Equation (\ref{calculL}) gives
\begin{align}
 \phi_1(0) = & \lp\frac{1-2\a}{2\a(1+2\a)}-\frac{1}{2\a}\rp\int_\R^\infty e^{-\a y}\up(y)\lp\phi_0(y)+\e\phi_1(y)\rp+
 \frac{1}{1+2\a}\lp\phi_0(0)+\e\phi_1(0)\rp \nonumber \\
  = & \frac{\phi_0(0)}{1+2\a}-\frac{2}{1+2\a}\int_0^\infty e^{-\a y}\up(y)\phi_0(y)dy + O(\e) \nonumber \\
  = & \frac{\phi_0(0)}{1+2\a}\lp 1- \int_\R e^{-2\a|y|}\up(y)dy\rp +O(\e). \label{phi1zero}
\end{align}
Now recall that $\up$ is of mass 1 and, using (\ref{phi1zero}) in (\ref{difference1}),
\begin{align}
 \Psi_2^0-\Psi_2 = & \e\phi_0(0)\int_\R\up(y)\lp 1-e^{-\a|y|}-\frac{1}{1+2\a}\lp 1-e^{-2\a|y|}\rp\rp dy+o(\e) \nonumber \\
  = & \e\phi_0(0)\int_\R\up(y)g(\a,y) dy+o(\e). \label{difference2}
\end{align}
The function $g$ is obviously even in $y$, and smooth on ${\R_*^+}^2.$ We can easily see that:
\begin{itemize}
 \item if $\a\geq\frac{1}{2},$ then $\forall y>0,\ g(\a,y)>0.$
 \item If $\a<\frac{1}{2},$ then there exists $y(\a)$ such that, in a neighbourhood of $\a,$ $\forall|y|<y(\a),$ $g(.,y)<0.$ 
\end{itemize}
We are interested in the local behaviour near $(\la^*,c_0^*).$ Hence, $g(\a,y)$ has to be considered near 
$\a^*:=\sqrt{-\fp+c^*_0\la^*-{\la^*}^2}=P(\la^*).$ 
\subparagraph{Perturbation enhancing the velocity: $\a^*<1/2$}
$P$ achieves its maximum at $\la=\frac{c}{2}$ and $c\mapsto P(\frac{c}{2})$ is nondecreasing. From 
\cite{BRR1} we know that $c^*_0$ satisfies
$$
\frac{c^*_0}{D}\leq \frac{c^*_0-\sqrt{{c^*_0}^2-c_K^2}}{2}
$$
where $c_K=2\sqrt{\fp}$ is the classical spreading speed for KPP-type reaction-diffusion. It follows easily that
$c^*_0\leq \frac{D\sqrt{\fp}}{\sqrt{D-1}}$ which, combined with the two upper remarks, yields the following sufficient condition
for $\a^*=\sqrt{P(\la^*)}$ to be less than $1/2$:
\begin{equation}\label{conditionm1}
 D < 2+\frac{1}{2\fp}+\frac{1}{2}\sqrt{12+(\frac{1}{\fp})^2+\frac{7}{\fp}}:=m_1.
\end{equation}
Hence, provided the condition (\ref{conditionm1}) holds, $\a^*<1/2$ and there exists $y(\a^*)$ and a neighbourhood $\MV$ of $\a^*$
such that $g(\a,y)<0$ for $|y|<y(\a^*)$ and $\a\in\MV.$ Take $\up$ such that $\textrm{supp}(\up)\subset]-y(\a^*),y(\a^*)[$
and, for all $\a\in\MV,$ that is for all $\la$ in a neighbourhood of $\la^*,$ for $\e$ small enough, 
$$
\lp\Psi_2^0-\Psi_2\rp(\la,c^*_0)<0.
$$
The result follows from the monotonicity of $\Psi_2$ with respect to $c.$

\subparagraph{Locally maximal velocity for for $\nu=\d_0:$ $\a^*>1/2$: proof of Theorem \ref{thmdenonacceleration}}
It remains to show that $\a^*$ can be greater than $\frac{1}{2}.$ We will need the second part of Proposition \ref{asymptoticDfp}.
  From now, we fix an exchange rate $\mub>4.$ We will use the fact that, at $(c^*_0,\la^*),$ 
 \begin{equation}\label{egalitederivees}
\frac{d}{d\la}\lp\Psi_1-\Psi_2^0\rp(\la)=0.
 \end{equation}
 Explicit computation gives 
 \begin{equation}\label{deriveesPsi}
\begin{cases}
\frac{d}{d\la}\Psi_1(\la) & = -2D\la+c \\
\frac{d}{d\la}\Psi_2^0(\la) & = -\frac{\mub(c-2\la)}{\sqrt{P(\la)}\lp 1+2\sqrt{P(\la)}\rp^2}.
\end{cases}
 \end{equation}
 Recall that $\la^*$ has to satisfy
 $$
 \frac{c^*_0}{D}\leq \la^*\leq \la_1^+:=\frac{c^*_0+\sqrt{{c^*_0}^2+4D\mub}}{2D}.
 $$
 Now applying Lemma \ref{asymptoticDfp}, for all $\d>0$ there exists $M>0,$
 $\fp,D>M$ entails 
 $\lb\Psi_1'(\la^*)-c^*_0\rb<\d$ and 
 $\lb \la_2^-\rb<\d$ (recall that $\la_2^-=\frac{c-\sqrt{c^2-c_K^2}}{2}$).
 To prove that $\a^*=\sqrt{P(\la^*)}>1/2,$ we distinguish two cases.
 
 First case: $\displaystyle\la^*>\frac{1}{2}\lp \la_2^-+\frac{c^*_0}{2}\rp.$ Thus
 $\displaystyle\la^*>\frac{c^*_0}{4}-\d$ which yields with Lemma \ref{asymptoticDfp}
 $$
 P(\la^*)=D\fp\frac{3}{16}-\fp+O(\d D\fp)>\frac{1}{4}.
 $$
 
 Second case: $\displaystyle\la_2^-<\la^*<\frac{1}{2}\lp \la_2^-+\frac{c^*_0}{2}\rp.$
Thus, from (\ref{egalitederivees}), (\ref{deriveesPsi}) and the above inequalities
given by Lemma \ref{asymptoticDfp},
$$
c^*_0+\d>\lb{\Psi_2^0}'(\la^*)\rb=\lb\frac{\mub(c^*_0-2\la^*)}{\a^*(1+2\a^*)^2}\rb
>\frac{2(c^*_0-2\d)}{\a^*(1+2\a^*)}
$$
which implies $\a*>1/2$ for $\d$ small enough. This concludes the proof of
Theorem \ref{thmdenonacceleration}.
\qed

\newpage
\bibliographystyle{plain}
\footnotesize
\bibliography{biblio}
\end{document}